\numberwithin{equation}{section}
\def\v{\varphi}
\def\Re{{\sf Re}\,}
\def\Im{{\sf Im}\,}
\def\1#1{\overline{#1}}
\def\2#1{\widetilde{#1}}
\def\3#1{\widehat{#1}}
\def\4#1{\mathbb{#1}}
\def\5#1{\frak{#1}}
\def\6#1{{\mathcal{#1}}}
\newcommand{\UH}{\mathbb{H}}
\newcommand{\de}{\partial}
\newcommand{\R}{\mathbb R}
\newcommand{\al}{\alpha}
\newcommand{\C}{\mathbb C}
\newcommand{\Hol}{{\sf Hol}}
\newcommand{\D}{\mathbb D}
\newcommand{\oD}{\overline{\mathbb D}}
\newcommand{\N}{\mathbb N}
\def\Re{{\sf Re}\,}
\def\Im{{\sf Im}\,}
\newcommand{\Real}{\mathbb{R}}
\newcommand{\Natural}{\mathbb{N}}
\newcommand{\Complex}{\mathbb{C}}
\newcommand{\ComplexE}{\overline{\mathbb{C}}}
\newcommand{\UD}{\mathbb{D}}
\newcommand{\Step}[2]{\begin{itemize}\item[{\bf Step~#1.}]{\it #2}\end{itemize}}
\newcommand{\step}[2]{\begin{itemize}\item[{\it Step~#1.}]{\it #2}\end{itemize}}
\newcommand{\proofbox}{\hfill$\Box$}
\newcommand{\mcite}[1]{\csname b@#1\endcsname}
\newcommand{\UC}{\mathbb{T}}
\newcommand{\Moeb}{\mathsf{M\ddot ob}}
\newcommand{\diam}{\mathrm{diam}}
\theoremstyle{theorem}
\def\dist{{\rm dist}}
\def\id{{\sf id}}
\def\Re{{\sf Re}\,}
\def\Im{{\sf Im}\,}
\newtheorem{theorem}{Theorem}[section]
\newtheorem{lemma}[theorem]{Lemma}
\newtheorem{proposition}[theorem]{Proposition}
\newtheorem{corollary}[theorem]{Corollary}
\theoremstyle{definition}
\newtheorem{definition}[theorem]{Definition}
\newtheorem{example}[theorem]{Example}
\theoremstyle{remark}
\newtheorem{remark}[theorem]{Remark}
\numberwithin{equation}{section}
\newenvironment{mylist}{\begin{list}{}%
{\labelwidth=2em\leftmargin=\labelwidth\itemsep=.4ex plus.1ex minus.1ex\topsep=.7ex plus.3ex minus.2ex}%
\let\itm=\item\def\item[##1]{\itm[{\rm ##1}]}}{\end{list}}
\newcommand{\clP}{\mathcal P}
\newcommand{\mRe}{\mathop{\mathsf{Re}}}
\newcommand{\anglim}{\angle\lim}
\title[Boundary regular fixed  points]{Boundary regular fixed points in Loewner theory}
\author[F. Bracci]{Filippo Bracci$^\ast$}
\address{F. Bracci, P. Gumenyuk: Dipartimento di Matematica, Universit\`a di Roma ``Tor Vergata", Via della Ricerca
Scientifica 1, 00133, Roma, Italia.} \email{fbracci@mat.uniroma2.it}\email{gumenyuk@mat.uniroma2.it}
\author[M. D. Contreras]{Manuel D. Contreras$^\dag$}
\author[S. D\'{\i}az-Madrigal]{Santiago D\'{\i}az-Madrigal$^\dag$}
\address{M. D. Contreras, S. D\'{\i}az-Madrigal: Camino de los Descubrimientos, s/n\\
Departamento de Matem\'{a}tica Aplicada~II and IMUS\\ Universidad de Sevilla\\ Sevilla,
41092\\ Spain.}\email{contreras@us.es} \email{madrigal@us.es}
\author[P. Gumenyuk]{Pavel Gumenyuk$^\ddag$}
\date{\today}
\subjclass[2010]{Primary 37C10, 30C35; Secondary 30D05, 30C80, 37F99, 37C25}
\keywords{Loewner chain, evolution family, boundary fixed point, univalent function, conformal mapping}
\thanks{$^\ast\,$Partially supported by the ERC grant ``HEVO - Holomorphic Evolution Equations'' n. 277691.}
\thanks{$^\dag\,$Partially supported by the \textit{Ministerio
de Econom\'{\i}a y Competitividad} and the European Union (FEDER), projects
MTM2009-14694-C02-02 and MTM2012-37436-C02-01, and  by \textit{La Consejer\'{\i}a de Educaci\'{o}n y Ciencia de la Junta de Andaluc\'{\i}a}.}
\thanks{$^\ddag\,$Partially supported by the FIRB grant Futuro in Ricerca ``Geometria Differenziale Complessa e Dinamica Olomorfa'' n. RBFR08B2HY}
\long\def\REM#1{\relax}
\begin{document}

\begin{abstract}
We characterize regular  fixed points  of evolution families in terms of analytical properties of the associated Herglotz vector fields and geometrical properties of the associated Loewner chains. We present several examples showing the r\^ole of  the given conditions. Moreover, we study the relations between evolution families and Herglotz vector fields at regular contact points and prove an embedding result for univalent self-maps of the unit disc with a given boundary regular fixed point into an evolution family with prescribed boundary data.
\end{abstract}

\maketitle

\section{Introduction}

Loewner theory, which originated in Ch.\,Loewner's seminal paper \cite{Loewner} of~1923 and later was developed deeply by P.P.\,Kufarev~\cite{Kuf} and Ch.\,Pommerenke~\cite{Pommerenke-65},\,\cite[Chapter~6]{Pommerenke}, is nowadays one of the main tools in geometric function theory. Loewner Theory proved to be effective in many extremal problems for univalent functions hardly accessible with other methods. The most famous example is its crucial r\^ole in the proof of the Bieberbach conjecture given by L.\,\,de Branges. Recently many
mathematicians have studied a stochastic variant of the Loewner
equation (SLE) introduced by O.\,Schramm. This leads to a breakthrough in several problems of statistical physics and probability theory. A historical overview and bibliography on Loewner Theory can be found, \textit{e.g.}, in survey papers~\cite{ABCD, Br}.

More recently, the authors of this paper have developed a general Loewner theory using an approach, which is different from the classical one and which extends also to complex hyperbolic manifolds \cite{BCM1, BCM2, RMIA, ABHK}. Note that an extension of the classical Loewner theory to several complex variables had been treated for a long time, see, \textit{e.g.},~\cite{Kohr-book}.

According to the new approach, Loewner theory relates three objects: Herglotz vector fields, evolution families and Loewner chains. Roughly speaking, a Herglotz vector field $G(z,t)$ is a
Carath\'eodory vector field such that $G(\cdot, t)$ is
semicomplete for almost every~${t\geq 0}$. An evolution family $(\v_{s,t})$ is a family of
holomorphic self-maps of the unit disc $\D$ satisfying a kind of
semigroup-type algebraic relations and some regularity hypotheses in~$s$ and~$t$. Finally, a Loewner chain $(f_t)$ is
a family of univalent mappings of the unit disc with increasing
ranges satisfying a certain regularity assumption in~$t$. See Section \ref{mmm} for precise definitions and basic results.

This three objects are related by the following fundamental equations:
\[
\frac{\de \v_{s,t}(z)}{\de t}=G(\v_{s,t}(z),t), \quad \frac{\de f_t(z)}{\de t}=-f_t'(z)G(z,t), \quad f_s(z)=f_t(\v_{s,t}(z)).
\]
The aim of the present paper is to study the boundary behavior of the three objects, relating dynamical properties of evolution families with analytical properties of the corresponding Herglotz vector fields and (in some cases) geometrical properties of Loewner chains.

In order to set up our results, we need to introduce some notations and definitions. Following \cite[\S4.3]{P2}, if $f:\UD\to \C$ is holomorphic and $\sigma\in \UC:=\partial\UD$ we say that $f$ is {\sl conformal} at $\sigma$ if  the non-tangential limit of $f$ at $\sigma$ exists---and we denote it  $f(\sigma)$---and the non-tangential limit of the incremental ratio of $f$ at $\sigma$ exists finitely and different from $0$. Let $(\v_{s,t})$ be an evolution family  in $\D$. A point $\sigma\in\UC$ is  a {\sl boundary regular fixed point} of $(\varphi_{s,t})$ if $\varphi_{s,t}$ is conformal at $\sigma$  and $\varphi_{s,t}(\sigma)=\sigma$ for all $t\geq s\ge 0$.
The {\sl spectral function} of $(\varphi_{s,t})$ at a boundary regular fixed point $\sigma\in\UC$ is $\Lambda:[0,+\infty)\to\Real$ defined by $\Lambda(t):=-\log|\varphi_{0,t}'(\sigma)|$. We prove that such a function is of bounded variation.

A {\sl boundary regular null point} for a  holomorphic vector field $H:\UD\to \C$ is a point $\sigma\in \UC$ such that $H$ has a non-tangential singularity at $\sigma$ and the non-tangential limit of the incremental ratio  of $H$ exists finitely at $\sigma$.

Our  main result is the following:

\begin{theorem}\label{main1}
Let $(\varphi_{s,t})$ be an evolution family of order $d\in [1,+\infty]$, let $G$ be its Herglotz vector field, and let $\sigma\in\UC$. The following  assertions are
equivalent:

\begin{mylist}
\item[(A)] for each $t\geq s\geq0$ the point~$\sigma$ is a  boundary regular fixed point of~$(\varphi_{s,t})$;

\item[(B)] The Herglotz vector field~$G$ satisfies the following two conditions:
\begin{itemize}
\item[(B.1)] for a.e. $t\ge0$, $G(\cdot, t)$ has a boundary regular null point at~$\sigma$;
\item[(B.2)] the function $t\mapsto
    G'(\sigma,t)$ is locally integrable in $[0,+\infty)$.
\end{itemize}
\end{mylist}
 Moreover, if one (and hence both) of these assertions holds, then the spectral function $\Lambda$ of $(\varphi_{s,t})$ at~$\sigma$ satisfies
\begin{equation}\label{EQ_G-lambda}
 \Lambda(t)=-\int_0^tG'(\sigma,s)ds\quad\text{for all $t\ge0$.}
\end{equation}
\noindent Furthermore, let $(f_t)$ be a Loewner chain associated with $(\varphi_{s,t})$ and suppose there exists $t_0\ge0$ such that the map $f_{t_0}$ is conformal at  $\sigma$. Then (A) and (B)  are equivalent~to:
\begin{mylist}
\item[(C)] for every $s\ge0$ the following assertions hold:
\begin{itemize}
\item[(C.1)] the map~$f_s$ is conformal at~$\sigma$;
\item[(C.2)] $f_s(\sigma)=f_{t_0}(\sigma)$;
\item[(C.3)] $\displaystyle
    \limsup_{t\to s^+}\big|\arg\big(f'_t(\sigma)/f'_s(\sigma)\big)\big|<\pi$.
\end{itemize}
\end{mylist}
Moreover, if condition (C)  holds, then the function $t\mapsto f_t'(\sigma)$ is locally absolutely continuous on~$[0,+\infty)$, with $\arg f_t'(\sigma)$ being constant.
\end{theorem}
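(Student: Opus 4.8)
The plan is to establish, in this order: the equivalence (A)$\Leftrightarrow$(B) together with \eqref{EQ_G-lambda}; then, under the extra hypothesis that $f_{t_0}$ is conformal at $\sigma$, the implication (A)$\Rightarrow$(C) and the final statement on $t\mapsto f_t'(\sigma)$; and finally the implication (C)$\Rightarrow$(A).

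For the first part I would conjugate everything by a M\"obius map $C$ with $C(\sigma)=\infty$, passing from $\D$ to the right half-plane $\Ha$: set $\psi_{s,t}:=C\circ\v_{s,t}\circ C^{-1}$ and $\widetilde G(w,t):=C'(C^{-1}(w))\,G(C^{-1}(w),t)$, so that the relation $\partial_t\psi_{s,t}=\widetilde G(\psi_{s,t},\cdot)$ is preserved, $(\psi_{s,t})$ is an evolution family of $\Ha$ and $\widetilde G$ its Herglotz vector field. In this model ``$\sigma$ is a boundary regular fixed point of $\v_{s,t}$'' becomes ``$\psi_{s,t}$ has at $\infty$ a finite angular derivative $\lambda_{s,t}=\v_{s,t}'(\sigma)$'', which is moreover a positive real by the Julia--Wolff--Carath\'eodory theorem applied at the fixed point; and ``$G(\cdot,t)$ has a boundary regular null point at $\sigma$'' becomes ``the semicomplete (Berkson--Porta) field $\widetilde G(\cdot,t)$ has at $\infty$ a finite angular derivative $\gamma(t)=G'(\sigma,t)\in\Real$''. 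The algebraic law $\v_{s,t}=\v_{u,t}\circ\v_{s,u}$ and the chain rule for angular derivatives at a common boundary regular fixed point give the cocycle identity $\lambda_{s,t}=\lambda_{u,t}\lambda_{s,u}$, so that $\Lambda(t)-\Lambda(s)=-\log\lambda_{s,t}$ for $0\le s\le t$. To pass between the infinitesimal level (the field $\widetilde G$) and the finite level (the family $\psi_{s,t}$) I would use the Julia-type differential inequality satisfied by a semicomplete field with a boundary regular null point at $\infty$ to control $\Re\psi_{s,t}(w)$ and $\psi_{s,t}(w)/w$ along non-tangential approaches to $\infty$, and feed it into $\partial_t\psi_{s,t}=\widetilde G(\psi_{s,t},\cdot)$; a Gronwall-type estimate in $t$ (legitimate since $G$ is Herglotz, hence measurable in $t$ and locally of class $L^d$) then yields, for (B)$\Rightarrow$(A), that $\infty$ remains a boundary regular fixed point of every $\psi_{s,t}$ and, simultaneously, that $\log\lambda_{s,t}=\int_s^t\gamma(u)\,du$; reading this back gives (A) and \eqref{EQ_G-lambda}, and in particular that $\Lambda$ is locally absolutely continuous with $\Lambda'=-G'(\sigma,\cdot)$ a.e. For (A)$\Rightarrow$(B), the cocycle and the $t$-regularity of $(\v_{s,t})$ force $\Lambda$ to have locally bounded variation, and the same comparison identifies its a.e.\ derivative with $-G'(\sigma,t)$, giving (B.1) (whence $G(\cdot,t)$ has a boundary null point, so $\sigma$ is fixed a.e.) and (B.2) (local integrability of $G'(\sigma,\cdot)$). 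I expect this to be the main technical obstacle of the whole theorem: one must simultaneously manage non-tangential limits in $z$ and merely measurable dependence on $t$, and keep the Julia/Berkson--Porta estimates uniform enough to survive the integration.

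Next assume in addition that $f_{t_0}$ is conformal at $\sigma$. For $s\le t_0$ write $f_s=f_{t_0}\circ\v_{s,t_0}$; since the self-map $\v_{s,t_0}$ is conformal at $\sigma$ and fixes $\sigma$, it maps Stolz angles at $\sigma$ into Stolz angles at $\sigma$ (Julia's lemma plus the non-vanishing angular derivative), so the chain rule for conformality gives that $f_s$ is conformal at $\sigma$ with $f_s(\sigma)=f_{t_0}(\sigma)$ and $f_s'(\sigma)=f_{t_0}'(\sigma)\,\v_{s,t_0}'(\sigma)$. For $s>t_0$ write instead $f_s=f_{t_0}\circ(\v_{t_0,s})^{-1}$ on $\v_{t_0,s}(\D)$, a region which contains a Stolz angle at $\sigma$ (being the image of one under a map conformal at $\sigma$); the branch of $(\v_{t_0,s})^{-1}$ that fixes $\sigma$ is conformal at $\sigma$ by the inverse-function statement for angular derivatives, so the incremental ratio of $f_s$ has a finite non-zero limit along that Stolz angle, and a Lindel\"of-type argument for univalent maps (as in \cite{P2}) upgrades this to conformality of $f_s$ at $\sigma$, again with $f_s(\sigma)=f_{t_0}(\sigma)$ and $f_s'(\sigma)=f_{t_0}'(\sigma)/\v_{t_0,s}'(\sigma)$. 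In all cases $f_s'(\sigma)=f_{t_0}'(\sigma)\,e^{\Lambda(s)-\Lambda(t_0)}$, a positive multiple of $f_{t_0}'(\sigma)$; this at once gives (C.1), (C.2), the constancy of $\arg f_t'(\sigma)$, the local absolute continuity of $t\mapsto f_t'(\sigma)$ (inherited from that of $\Lambda$), and (C.3) (the $\limsup$ there being $0$). Once (C)$\Rightarrow$(A) is also proved, the same computation yields the closing assertion of the theorem.

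For (C)$\Rightarrow$(A), assume (C). From $f_s(\D)\subseteq f_t(\D)$ and univalence of $f_t$ we have $\v_{s,t}=f_t^{-1}\circ f_s$ on $\D$. The key geometric fact is that, for univalent $f_s,f_t$ with $f_s(\D)\subseteq f_t(\D)$, both conformal at $\sigma$ and with $f_s(\sigma)=f_t(\sigma)=:\omega$, the ``sheet'' of $f_s(\D)$ at $\omega$ singled out by $\sigma$ is a half-plane contained in $f_t(\D)$, hence contained in a half-plane sheet of $f_t(\D)$ at $\omega$; therefore $\arg\big(f_s'(\sigma)/f_t'(\sigma)\big)\in\{0,\pi\}$, the value $0$ occurring exactly when that sheet is the one of $f_t(\D)$ corresponding to $\sigma$, i.e.\ exactly when $f_t^{-1}\circ f_s$ carries non-tangential approaches to $\sigma$ back to $\sigma$, so that $\sigma$ is a boundary regular fixed point of $\v_{s,t}$ with $\v_{s,t}'(\sigma)=f_s'(\sigma)/f_t'(\sigma)>0$. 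Consequently, by (C.1) the quantity $|\arg(f_t'(\sigma)/f_s'(\sigma))|$ is $0$ or $\pi$ for every $t\ge s$, and (C.3) says it equals $0$ for all $t$ in a right-neighbourhood of $s$; this is the local form of (A). It remains to propagate: fixing $s$ and letting $\tau^*$ be the supremum of those $\tau\ge s$ for which $\sigma$ is a boundary regular fixed point of $\v_{s,u}$ for all $u\in[s,\tau]$, one shows $\tau^*=+\infty$ by a connectedness argument---the local statement applied at interior times, together with the clean chain rule $\v_{s,u}=\v_{u',u}\circ\v_{s,u'}$ for self-maps sharing a boundary regular fixed point, makes the corresponding set open, while closedness follows from the regularity of the evolution family and a normal-families/Julia-lemma argument ensuring that the boundary regular fixed point, with its bounded angular derivatives, passes to the limit $\v_{s,\tau^*}$. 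This proves (A) and, with it, the last assertion of the theorem. Beyond the measurable-in-$t$ analysis of the first part, the delicate points here are this propagation step and the half-plane/prime-end dichotomy above---the latter being exactly where the threshold $\pi$ in (C.3) enters, and what the examples announced in the abstract are meant to show cannot be relaxed.
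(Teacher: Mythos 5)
Your overall decomposition matches the paper's, and the parts about (A)$\Leftrightarrow$(C) are essentially the paper's own argument in sketch form: the $\{0,\pi\}$ dichotomy for $\arg(f'_t(\sigma)/f'_s(\sigma))$ via isogonality/prime-end considerations (the paper's Step~1), the propagation/connectedness argument with a $t_*=\inf$ (its Steps~2.1--2.4), and, for (A)$\Rightarrow$(C), computing the incremental ratio of $f_t$ along the image curve $\varphi_{s,t}([0,1))$ and upgrading the limit via a normal-families lemma (the paper's Lemma~\ref{LM_derivative_exists}). Your (B)$\Rightarrow$(A) sketch is also in the right spirit: what you call a ``Julia-type differential inequality plus Gronwall'' is exactly the paper's argument using the negative Poisson kernel $u$, the estimate $\Re(v\,G(\cdot,t))+G'(\sigma,t)u\le 0$ from the boundary Julia inequality for generators with a BRNP, and the ODE comparison giving $g_z(s,t)\le 0$.

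The genuine gap is in (A)$\Rightarrow$(B), and specifically in (B.1). You write that ``the cocycle and the $t$-regularity of $(\varphi_{s,t})$ force $\Lambda$ to have locally bounded variation, and the same comparison identifies its a.e.\ derivative with $-G'(\sigma,t)$, giving (B.1).'' But the ``comparison'' you invoke is the Julia-type inequality for a semicomplete vector field that already has a BRNP at $\sigma$; it cannot be used to establish that the BRNP exists in the first place. This is circular: to speak of $G'(\sigma,t)$, you first need to prove that, for a.e.\ $t$, the generator $G(\cdot,t)$ has a boundary regular null point at $\sigma$, and this does not follow from the cocycle identity or from $\Lambda$ having bounded variation alone. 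The paper's resolution is a separate, nontrivial limiting argument: fix a differentiability point $t_0$ of $\Lambda$ (and of $\sigma(\cdot)$ in the contact-point setting), form the semigroup $\phi^{t_0}_t$ generated by the frozen field $\tilde G(\cdot,t_0)$ via the product formula $\phi^{t_0}_t=\lim_n(\tilde\varphi_{t_0,t_0+t/n})^{\circ n}$, show using the chain rule for angular derivatives and Theorem~\ref{LM_Julia}(iii) that $1$ is a BRFP of every $\phi^{t_0}_t$ with $(\phi^{t_0}_t)'(1)\le e^{-\Lambda'(t_0)t}$, and only then invoke Theorem~\ref{TH_G-phi} to conclude $\tilde G(\cdot,t_0)$ has a BRNP with the right dilation. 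Without this step (or an equivalent replacement), your passage from the finite-time data $(\varphi_{s,t})$ to the infinitesimal statement (B.1) is missing, and the local integrability in (B.2) as well as identity~\eqref{EQ_G-lambda} are consequences you cannot yet reach. Everything else in your sketch is sound.
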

\noindent The proof is given in Section \ref{BRFP-CH}. In Corollary \ref{pole} we show that a similar result holds if $f_{t_0}$ has a simple pole at~$\sigma$ and (C)~is replaced with suitable  conditions for this case. In Section~\ref{EXAMPLE-ES} we present examples showing that conditions~(B.1) and~(C.3) cannot be omitted, and explain the essential role of the conformality of~$f_{t_0}$ at~ $\sigma$. Moreover, several (counter)examples to natural conjectures concerning the regularity of $t\mapsto G'(\sigma,t)$ versus the $L^d$-regularity of the evolution family are also given.

In part, Theorem~\ref{main1} is a consequence of a more general result on regular contact points of evolution families, which relates them with analytic behavior of Herglotz vector fields, see Theorem~\ref{TH_contact}.

Finally, as an application of our main result, in Section \ref{embedding-ch} we prove the following embedding theorem with prescribed boundary data:

\begin{theorem}\label{TH_embedd}
Suppose $\phi\in\Hol(\UD,\UD)$ is univalent and has a boundary regular fixed point at~$\sigma\in\UC$. Then for any $t_0>0$ and for any locally absolutely continuous function $\Lambda:[0,+\infty)\to\Real$ with $\Lambda(0)=0$ and $\Lambda(t_0)=-\log\phi'(\sigma)$ there exists an
evolution family $(\varphi_{s,t})$ satisfying the following conditions:
\begin{mylist}
\item[(i)] $(\varphi_{s,t})$ has a boundary regular fixed point at~$\sigma$,

\item[(ii)] the spectral function of~$(\varphi_{s,t})$ at~$\sigma$ coincides with~$\Lambda$,

\item[(iii)] $\varphi_{0,t_0}=\phi$.
\end{mylist}
\end{theorem}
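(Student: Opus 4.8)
The plan is to build the desired evolution family by combining two pieces: a normalized version of $\phi$ acting during the "fast clock" interval $[0,t_0]$, and the identity elsewhere, then reparametrizing time so that the spectral function becomes the prescribed $\Lambda$. More precisely, first I would reduce to the case $\sigma=1$ by conjugating with a rotation, and then, since $\phi$ is univalent with a boundary regular fixed point at~$1$, I would recall (from the Denjoy--Wolff/boundary-fixed-point theory, e.g. the Koenigs-type linearization at a boundary fixed point) that $\phi$ embeds into a one-parameter semigroup or at least that one can iterate it in a controlled way; the cleanest route, however, is to construct $(\varphi_{s,t})$ directly. Set $\lambda(t):=\Lambda'(t)$, which exists a.e.\ and is locally integrable by the hypothesis that $\Lambda$ is locally absolutely continuous. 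The idea is to produce a Herglotz vector field $G(z,t)$ whose null point at~$1$ has $G'(1,t)=-\lambda(t)$, and such that the time-$[0,t_0]$ evolution is exactly~$\phi$; then Theorem~\ref{main1} gives (i) and (ii) for free, and (iii) is the construction requirement.

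The concrete construction I would use is the following. Choose a fixed autonomous semicomplete vector field $H_\phi$ on~$\D$ whose time-one flow has the same "boundary shape" as $\phi$ at~$1$ — for instance, pick a one-parameter semigroup $(\psi_r)_{r\ge0}$ with $\psi_1$ conformal at~$1$, $\psi_1(1)=1$, and embed $\phi$ into a Loewner-type evolution on $[0,1]$ whose associated self-map from $0$ to~$1$ is~$\phi$; such an embedding of a single univalent self-map into an evolution family on a compact interval is classical (every univalent self-map with the right normalization is $\varphi_{0,1}$ of some evolution family — this is essentially Pommerenke's embedding, and here we additionally have a boundary regular fixed point, so the embedding can be taken to respect it). Denote by $G_0(z,t)$, $t\in[0,1]$, the Herglotz vector field of that embedding, so that $G_0(\cdot,t)$ has a boundary regular null point at~$1$ for a.e.\ $t$, and let $\beta(t):=-G_0'(1,t)\ge0$; then $\int_0^1\beta(t)\,dt=-\log\phi'(1)=\Lambda(t_0)$. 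Now I would rescale and glue: on $[0,t_0]$ define $G(z,t):=c(t)\,G_0\big(z,\tau(t)\big)$ where $\tau:[0,t_0]\to[0,1]$ is an absolutely continuous time change and $c(t)>0$ is a scalar weight chosen so that the solution of $\dot z = G(z,t)$ from time $0$ to time $t_0$ is exactly $\phi$ while simultaneously $-\int_0^t G'(1,s)\,ds=\Lambda(t)$ on $[0,t_0]$; on $[t_0,\infty)$ define $G(z,t)$ to be any Herglotz vector field with boundary regular null point at~$1$ realizing the remaining spectral increment $\Lambda(t)-\Lambda(t_0)$ (e.g.\ $G(z,t):=\lambda(t)\,g(z)$ for a fixed autonomous semicomplete $g$ with $g'(1)=-1$ — note $\lambda$ need not be sign-definite, but one can write $\lambda=\lambda_+-\lambda_-$ and use $g$ and $-g$; here one must check $-g$ is still semicomplete, which holds if $g$ is the generator of a group of automorphisms fixing~$1$, i.e.\ $g(z)=a(1-z)^2$ with $a\ge0$, whose flow is a parabolic or hyperbolic group — so better: split $\lambda$ and use a hyperbolic generator and its negative, both fixing~$1$ as a boundary regular fixed point). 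The matching condition on $[0,t_0]$ is the delicate point: one wants both $\varphi_{0,t_0}=\phi$ and the prescribed logarithmic derivative profile. Because the logarithmic derivative at the boundary fixed point of the flow depends only on $\int_0^t G'(1,s)\,ds$ (this is exactly formula~\eqref{EQ_G-lambda}) and not on the "shape" part of $G_0$, one has two separate knobs: the overall time-parametrization $\tau(t)$ controls which self-map $\varphi_{0,t}$ one gets (any monotone reparametrization still yields $\varphi_{0,t_0}=\phi$ as long as $\tau(t_0)=1$, by the reparametrization invariance of evolution families), while the weight $c(t)$ — inserted in a way that does not change the self-maps, e.g.\ by also compensating inside — adjusts the derivative. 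Cleanest: since $\varphi_{0,t}$ for the evolution of $c(t)G_0(z,\tau(t))$ equals the evolution of $G_0(z,s)$ at the reparametrized time $s(t)=\int_0^t c(r)\tau'(r)\,dr$ (standard time-change for ODEs), I would choose $\tau$ and $c$ so that $s(t_0)=1$ (giving $\varphi_{0,t_0}=\phi$) and so that $\Lambda(t) = \int_0^{s(t)}\beta(\xi)\,d\xi$ for $t\in[0,t_0]$, which is solvable for $s(t)$ as an increasing absolutely continuous function since the right-hand side $B(u):=\int_0^u\beta$ is continuous increasing from $0$ to $\Lambda(t_0)$ and $\Lambda$ is continuous from $0$ to $\Lambda(t_0)$: set $s(t):=B^{-1}(\Lambda(t))$ (using that $B$ is strictly increasing once we arrange $\beta>0$, which we may by adding a harmless hyperbolic generator to $G_0$ and compensating).

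Having produced $G$, I would verify it is a genuine Herglotz vector field of the appropriate order (the Carathéodory/measurability conditions are inherited from $G_0$ and from $\lambda\in L^1_{\mathrm{loc}}$; semicompleteness of each $G(\cdot,t)$ is inherited since positive scalar multiples of semicomplete fields are semicomplete), and then invoke the fundamental correspondence (Section~\ref{mmm}) to get the evolution family $(\varphi_{s,t})$. Then: (iii) holds by the time-change bookkeeping above; (B.1)–(B.2) hold by construction, so Theorem~\ref{main1}(A) gives that $\sigma=1$ is a boundary regular fixed point — this is (i); and the moreover-part of Theorem~\ref{main1}, formula~\eqref{EQ_G-lambda}, gives that the spectral function equals $-\int_0^t G'(1,s)\,ds$, which by the construction of $c,\tau$ (resp.\ $\lambda$ on $[t_0,\infty)$) equals $\Lambda(t)$ — this is (ii). Finally undo the rotation conjugation to return to general $\sigma$.

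The main obstacle I anticipate is the simultaneous matching on $[0,t_0]$: ensuring that one really has independent control of "which univalent map $\varphi_{0,t_0}$ is produced" (must be exactly $\phi$) and of "the derivative profile $\Lambda$ on $[0,t_0]$". The resolution sketched above — exploiting that the only boundary datum the derivative sees is the integral $\int_0^t G'(\sigma,s)\,ds$ (formula~\eqref{EQ_G-lambda}), combined with the reparametrization invariance of evolution families — decouples the two; but making this rigorous requires care with the case $\phi'(\sigma)=1$ versus $\phi'(\sigma)<1$ (i.e.\ $\Lambda(t_0)>0$ versus $=0$), with the strict monotonicity of $B$ (handled by perturbing $G_0$ by a small hyperbolic generator and then renormalizing so that $\varphi_{0,t_0}$ is still $\phi$), and with checking that the absolutely continuous time change $s(t)=B^{-1}(\Lambda(t))$ composed back into $G_0$ preserves the order-$d$ integrability class. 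A secondary technical point is handling $\lambda$ that changes sign on $(t_0,\infty)$, which forces using a generator $g$ and $-g$ that are both semicomplete — so $g$ must generate a one-parameter group of automorphisms of $\D$ fixing $\sigma$ (hyperbolic), not merely a semigroup; one should record that such a $g$ exists and has $g'(\sigma)\ne0$, so after scaling $g'(\sigma)=-1$, and then $\lambda_+ g$ and $\lambda_-(-g)$ can be glued.
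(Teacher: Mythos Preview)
Your approach has a real gap at the matching step on $[0,t_0]$, which you correctly flag as the main obstacle but do not resolve. As you yourself observe, replacing $G_0$ by $c(t)\,G_0(\cdot,\tau(t))$ amounts to a single time change $s(t)=\int_0^t c\,\tau'$, so that $\varphi_{0,t}=\varphi^{G_0}_{0,s(t)}$ and the resulting spectral function is $t\mapsto\Lambda_0\big(s(t)\big)$, where $\Lambda_0$ is the spectral function of the original embedding. But $s$ must be nondecreasing, so this composite can only traverse the values of $\Lambda_0$ monotonically. The theorem, however, allows $\Lambda$ to be an \emph{arbitrary} locally absolutely continuous function with the given endpoint values: it may oscillate on $[0,t_0]$, and when $\sigma$ is a repelling BRFP of~$\phi$ (so $\phi'(\sigma)>1$, a case your dichotomy omits) the endpoint $\Lambda(t_0)=-\log\phi'(\sigma)$ is negative. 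Thus $s(t)=B^{-1}(\Lambda(t))$ is in general undefined or non-monotone; scaling and reparametrizing collapse into one monotone degree of freedom, not two independent knobs. A secondary gap is that your starting point --- an embedding of $\phi$ into an evolution family \emph{preserving} the BRFP at~$\sigma$ --- is already non-trivial and is essentially the content of the theorem prior to the spectral adjustment; the classical Pommerenke-type embedding does not come with a prescribed boundary regular fixed point.

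The paper resolves both issues through a different mechanism: conjugation by time-dependent automorphisms of~$\UD$ fixing~$\sigma$, rather than reparametrization. Hyperbolic automorphisms $m_t$ with $m_t'(\sigma)=e^{a(t)}$ shift the spectral function additively to $\Lambda_0(t)+a(0)-a(t)$, which realizes \emph{any} prescribed $\Lambda$; a further conjugation by parabolic automorphisms then corrects $\varphi_{0,t_0}$ to equal $\phi$ exactly. The initial embedding with BRFP is obtained geometrically via Lemmas~\ref{LM_boundedLCh} and~\ref{LM_conformal-embedding} and Proposition~\ref{PR_incl_chains}: one interpolates between $\phi(\UD)$ and $\UD$ by a chordally admissible chain of domains lying in a half-plane tangent to~$\UC$ at~$\sigma$, and then uses Theorem~\ref{TH_contact} to extract a Loewner chain whose evolution family has the required boundary regular fixed point.
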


\section{Preliminaries}

\subsection{Boundary regular contact and fixed points}

In what follows, for a map ${f:\D \to \C}$  and a point $\sigma\in \UC:=\de\D$, we denote by $\angle\lim_{z\to \sigma}f(z)$ the
\textsl{angular} (or \textsl{non-tangential}) \textsl{limit} of $f$ at $\sigma$.

By $\Hol(U,W)$ we will denote the class of all holomorphic maps of~$U$ into~$W$.

\begin{definition}
Let $\varphi\in\Hol(\UD,\UD)$. A point $\sigma\in \UC$ is called a {\sl contact point} if the angular limit
$\varphi(\sigma):=\angle \lim_{z\to \sigma}\varphi(z)$ exists and belongs to~$\UC$. If, in addition, the angular derivative
\begin{equation}\label{EQ_ang-der}
\varphi'(\sigma):=\anglim_{z\to\sigma}\frac{\varphi(z)-\varphi(\sigma)}{z-\sigma}
\end{equation}
exists finitely, then the contact point~$\sigma$ is said to be \textsl{regular}.

A (regular) contact point $\sigma$ for $\v\in\Hol(\UD,\UD)$ is called a \textsl{boundary (regular) fixed point} if $\varphi(\sigma)=\sigma$. For shortness, we will write ``\textsl{BRFP}'' for ``boundary regular fixed point''.
\end{definition}

A characterization of regular contact points is given by the classical Julia\,--\,Wolff\,--\,Carath\'eodory theorem (see, \textit{e.g.}, \cite[\S1.2.1]{Abate} or \cite[p.\,7\,--\,12]{AhlforsConInv}). By the \textsl{boundary dilatation coefficient} of~$\varphi\in\Hol(\UD,\UD)$ at~$\sigma\in\UC$ we mean
\[
    \al_\varphi(\sigma):=\liminf_{z\to \sigma} \frac{1-|\varphi(z)|}{1-|z|}.
\]

\begin{theorem}[Julia\,--\,Wolff\,--\,Carath\'eodory]\label{LM_Julia}
Let $\varphi\in\Hol(\UD,\UD)$ and $\sigma\in\UC$. Then the following four statements are equivalent:
\begin{mylist}
\item[(i)] $\sigma$ is a regular contact point of~$\varphi$;
\item[(ii)]$\alpha_\varphi(\sigma)<+\infty$;
\item[(iii)] there exist a point~$\omega\in\UC$ and $A>0$ such that for all $z\in \UD$
\begin{equation*}
\frac{|\omega-\varphi(z)|^2}{1-|\varphi(z)|^2}\le A \frac{|\sigma-z|^2}{1-|z|^2}
\end{equation*}
\item[(iv)] $\limsup_{(0,1)\ni r\to 1}|\varphi(r\sigma)|=1$ and $\limsup_{(0,1)\ni r\to 1}|\varphi'(r\sigma)|<+\infty$.
\end{mylist}
Moreover, if the above conditions hold, then:
\begin{mylist}
\item[(v)]  the point~$\omega$ in~(iii) is unique and coincides with $\varphi(\sigma):=\anglim_{z\to\sigma}\varphi(z)$;

\item[(vi)] $\varphi'(\sigma)=\angle\lim_{z\to \sigma}\varphi'(z)$ and $\sigma\overline{\omega}\varphi'(\sigma)=\alpha_\varphi(\sigma)=A_0$, where $A_0$ is the least constant~$A$ for which~(iii) holds.
\end{mylist}
\end{theorem}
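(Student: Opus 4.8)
The plan is to run the cycle of implications $(ii)\Rightarrow(iii)\Rightarrow(ii)$, $(ii)\Rightarrow(i)\Rightarrow(ii)$, $(ii)\Rightarrow(iv)\Rightarrow(ii)$, reading off $(v)$ and $(vi)$ along the way; one may assume $\varphi$ is non-constant, as otherwise all four statements fail and there is nothing to prove. The engine is Julia's lemma, which comes out of the Schwarz--Pick inequality. Assuming $(ii)$, put $\alpha:=\alpha_\varphi(\sigma)<+\infty$ and pick $z_n\to\sigma$ with $(1-|\varphi(z_n)|)/(1-|z_n|)\to\alpha$; boundedness of this ratio together with $|z_n|\to1$ forces $|\varphi(z_n)|\to1$, so after passing to a subsequence $\varphi(z_n)\to\omega\in\UC$. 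Writing the Schwarz--Pick inequality for $\varphi$ at the pair $(z,z_n)$ via the identity $1-\big|(u-v)/(1-\bar vu)\big|^2=(1-|u|^2)(1-|v|^2)/|1-\bar vu|^2$ and letting $n\to\infty$ yields $(iii)$ with $A=\alpha$, so $A_0\le\alpha_\varphi(\sigma)$. Conversely, $(iii)$ gives $1-|\varphi(z)|\le|\omega-\varphi(z)|$, hence $\tfrac{1-|\varphi(z)|}{1+|\varphi(z)|}\le A\tfrac{|\sigma-z|^2}{1-|z|^2}$; evaluating at $z=r\sigma$ forces $\limsup_{r\to1}\tfrac{1-|\varphi(r\sigma)|}{1-r}\le A$, so $\alpha_\varphi(\sigma)\le A$, which is $(ii)$ and also $A_0\ge\alpha_\varphi(\sigma)$; thus $A_0=\alpha_\varphi(\sigma)$. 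Restricting $(iii)$ to a Stolz angle at $\sigma$ makes $\varphi(z)\to\omega$ nontangentially, so $\omega$ is the (unique) angular limit $\varphi(\sigma)$: this is $(v)$; the same inequality with the sharp constant excludes $\alpha_\varphi(\sigma)=0$ (that would force $\varphi\equiv\omega$), so $\alpha_\varphi(\sigma)\in(0,+\infty)$.

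For the angular derivative we normalise by rotations so that $\sigma=\omega=1$ and set $\alpha=\alpha_\varphi(1)$. Since $\Re\tfrac{1+w}{1-w}=\tfrac{1-|w|^2}{|1-w|^2}$, Julia's inequality says $\Re\tfrac{1+\varphi(z)}{1-\varphi(z)}\ge\tfrac1\alpha\Re\tfrac{1+z}{1-z}$, so
\[
Q(z):=\alpha\,\frac{1+\varphi(z)}{1-\varphi(z)}-\frac{1+z}{1-z}
\]
is holomorphic on $\UD$ with $\Re Q\ge0$, hence admits a Herglotz representation $Q(z)=ic+\int_\UC\tfrac{\zeta+z}{\zeta-z}\,d\mu(\zeta)$ with $c\in\R$ and $\mu$ a finite positive measure. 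On a Stolz angle at $1$ one has $|1-z|\le M(1-|z|)\le M|\zeta-z|$, so the integrand defining $(1-z)Q(z)$ is bounded there and tends to $0$ off $\zeta=1$; dominated convergence gives $(1-z)Q(z)\to2\mu(\{1\})$ nontangentially. Solving the defining relation as $\varphi(z)=\tfrac{R(z)-\alpha}{R(z)+\alpha}$ with $R:=Q+\tfrac{1+z}{1-z}$, so that $1-\varphi(z)=\tfrac{2\alpha}{R(z)+\alpha}$, we obtain
\[
\anglim_{z\to1}\frac{\varphi(z)-1}{z-1}=\anglim_{z\to1}\frac{2\alpha}{(R(z)+\alpha)(1-z)}=\frac{\alpha}{1+\mu(\{1\})}=:\varphi'(1),
\]
a finite positive real. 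A second-order radial expansion gives $\tfrac{1-|\varphi(r)|}{1-r}\to\varphi'(1)$, whence $\alpha=\alpha_\varphi(1)\le\varphi'(1)=\alpha/(1+\mu(\{1\}))$ forces $\mu(\{1\})=0$ and $\varphi'(1)=\alpha$. Differentiating the Herglotz formula under the integral and repeating the estimate gives $(1-z)^2Q'(z)\to0$, hence $(1-z)^2R'(z)\to2$, and from $\varphi'=\tfrac{2\alpha R'}{(R+\alpha)^2}$ together with $(R(z)+\alpha)(1-z)\to2$ we get $\varphi'(z)\to\varphi'(1)$ nontangentially. Undoing the normalisation yields $\sigma\overline{\omega}\,\varphi'(\sigma)=\alpha_\varphi(\sigma)=A_0\ne0$ and $\varphi'(\sigma)=\anglim_{z\to\sigma}\varphi'(z)$, i.e.\ $(i)$ and $(vi)$. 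For the converse $(i)\Rightarrow(ii)$, a second-order expansion using the finite angular limits $\varphi(\sigma)$ and $\varphi'(\sigma)$ gives $\tfrac{1-|\varphi(r\sigma)|}{1-r}\to\Re\big(\overline{\varphi(\sigma)}\,\sigma\,\varphi'(\sigma)\big)<+\infty$, so $\alpha_\varphi(\sigma)<+\infty$.

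Finally, $(ii)\Rightarrow(iv)$ is immediate from $(v)$ and $(vi)$, since then $|\varphi(r\sigma)|\to1$ and $\varphi'(r\sigma)\to\varphi'(\sigma)$. For $(iv)\Rightarrow(ii)$: if $\limsup_{r\to1}|\varphi'(r\sigma)|=M<+\infty$ then $|\varphi'(r\sigma)|\le M+1$ for $r$ near $1$, and since $\big|\tfrac{d}{dr}|\varphi(r\sigma)|^2\big|=\big|2\Re(\overline{\varphi(r\sigma)}\,\sigma\,\varphi'(r\sigma))\big|\le2(M+1)$, integrating between $s$ and a sequence $r_n\to1$ with $|\varphi(r_n\sigma)|\to1$ (furnished by the first hypothesis in $(iv)$) gives $1-|\varphi(s\sigma)|^2\le2(M+1)(1-s)$ near $1$, whence $\alpha_\varphi(\sigma)\le2(M+1)<+\infty$.

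The main obstacle is the Carath\'eodory step in the second paragraph: upgrading the one-sided horocycle estimate of Julia's lemma to the existence, finiteness and exact value of the angular derivative. The decisive ingredients there are (a) repackaging Julia's inequality as a Carath\'eodory function $Q$ and invoking the Herglotz representation, and (b) exploiting the sharpness $A_0=\alpha_\varphi(\sigma)$ to annihilate the atom of the representing measure at $\sigma$; the nontangential convergence of $\varphi'$ itself then drops out by differentiating the representation under the integral sign.
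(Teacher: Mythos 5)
The paper does not prove this statement: it is quoted as the classical Julia--Wolff--Carath\'eodory theorem with references to Abate and Ahlfors, so there is no in-paper argument to compare against. Your proof is correct and complete along the standard classical lines: Julia's lemma via Schwarz--Pick for the equivalence of (ii) and (iii) and the sharp constant, and the Herglotz/Nevanlinna representation of the Carath\'eodory function $Q$ (with the sharpness $A_0=\alpha_\varphi(\sigma)$ killing the atom of $\mu$ at $\sigma$) for the existence and value of the angular derivative and the nontangential convergence of $\varphi'$; the remaining implications involving (i) and (iv) are handled by routine expansions. This representation-theoretic treatment of the Carath\'eodory step is one of the two standard routes (the other being the more elementary direct horocycle estimates in the cited sources), and every step checks out.
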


Denote by $\Moeb(\UD)$ the M\"obius group of all conformal automorphisms of~$\UD$. The classical theorem of Denjoy\,--\,Wolff (see, {\it e.g.} \cite[\S1.3.2]{Abate}) can be stated as follows.

\begin{theorem}[Denjoy\,--\,Wolff]\label{Denjoy-Thm} Let $\varphi\in \Hol(\UD,\UD)$, $\varphi\neq \id_\UD$. Then one of the following alternatives takes place:
\begin{mylist}
  \item[(i)] $\varphi$ is conjugated to a rotation, i.\,e.,  $\varphi\in\Moeb(\UD)$ and has a unique fixed point~$\tau\in\UD$;
  \item[(ii)] the sequence of iterates $(\varphi^{\circ n})$ converges uniformly on compacta to a unique fixed point~$\tau\in\UD$ of $\varphi$;
  \item[(iii)] $\varphi$ has no fixed points in $\UD$, but the sequence of iterates $(\varphi^{\circ n})$ converges uniformly on compacta to a BRFP $\tau\in \UC$ of $\v$ with~$\alpha_\varphi(\tau)\le 1$.
\end{mylist}
Moreover, if $\sigma\neq\tau$ is a contact point of~$\varphi$, then
$\alpha_\varphi(\sigma)\ge \big|1-\overline\tau\varphi(\sigma)\big|^2\big/\big|1-\overline\tau\sigma\big|^2.$ The equality occurs only in case~(i).
\end{theorem}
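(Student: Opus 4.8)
The plan is to treat separately the case in which $\varphi$ has a fixed point in $\UD$ and the case in which it does not, and then to read off the \emph{moreover} assertion from the Schwarz--Pick lemma (when the Denjoy--Wolff point lies inside $\UD$) or from a limiting form of the Julia inequality (when it lies on $\UC$). First I would note that $\varphi$ can have at most one fixed point in $\UD$: two of them, after conjugating one to $0$, would make $\varphi$ a rotation fixing a nonzero point, hence $\varphi=\id$. So assume $\varphi(\tau)=\tau$ with $\tau\in\UD$, and conjugate so that $\tau=0$; then $|\varphi(z)|\le|z|$ by the Schwarz lemma. If equality holds at one nonzero point (equivalently $|\varphi'(0)|=1$) then $\varphi$ is a rotation --- case~(i). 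Otherwise $|\varphi'(0)|<1$, each disc $\{|z|\le r\}$ is $\varphi$-invariant, and on it the Schwarz--Pick inequality is a strict contraction of the hyperbolic metric $\rho$ towards $0$, so $\varphi^{\circ n}\to0$ locally uniformly --- case~(ii).

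Suppose now $\varphi$ has no fixed point in $\UD$. For $r\in(0,1)$ the map $r\varphi$ sends the compact convex set $\{|z|\le r\}$ into itself, so by Brouwer's theorem it has a fixed point $w_r\in\UD$; choosing $r_n\to1$ with $w_{r_n}\to\tau\in\overline{\UD}$, the relation $w_{r_n}=r_n\varphi(w_{r_n})$ shows that $\tau\in\UD$ would be an interior fixed point, so $\tau\in\UC$. The Schwarz--Pick inequality $\rho(r\varphi(z),w_r)\le\rho(z,w_r)$, written in $1-|\cdot|^2$ form, passes to the limit $r=r_n\to1$ and gives, for all $z\in\UD$,
\[\frac{|1-\overline\tau\varphi(z)|^2}{1-|\varphi(z)|^2}\ \le\ \frac{|1-\overline\tau z|^2}{1-|z|^2},\]
i.e.\ $\varphi$ maps every horocycle $E(\tau,c)=\{z:|1-\overline\tau z|^2<c(1-|z|^2)\}$ into itself. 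By Theorem~\ref{LM_Julia}, applied with $\omega=\sigma=\tau$ and $A=1$, this makes $\tau$ a boundary regular fixed point with $\alpha_\varphi(\tau)\le1$. For the convergence of the iterates, every $\varphi^{\circ n}$ likewise preserves each $\overline{E(\tau,c)}$, so the orbit of any $z_0\in\UD$ stays in a closed horodisc touching $\UC$ only at $\tau$. If $\varphi\notin\Moeb(\UD)$ the orbit has no accumulation point $p$ inside $\UD$: the displacements $\rho(\varphi^{\circ n}(z_0),\varphi^{\circ(n+1)}(z_0))$ decrease to a limit equal, along a subsequence converging to $p$, both to $\rho(p,\varphi(p))$ and to $\rho(\varphi(p),\varphi(\varphi(p)))$, so equality in Schwarz--Pick at two distinct points forces $\varphi\in\Moeb(\UD)$ unless $p=\varphi(p)$, which is excluded. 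Hence $|\varphi^{\circ n}(z_0)|\to1$, and being in that horodisc the orbit tends to $\tau$, so $\varphi^{\circ n}\to\tau$ locally uniformly (Vitali). If instead $\varphi\in\Moeb(\UD)$ it is parabolic or hyperbolic, and one computes its iterates after conjugating to $\UH$ ($w\mapsto w+b$ or $w\mapsto\lambda w$) to reach the same conclusion. This is case~(iii).

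For the \emph{moreover} part, in all three cases one has for every $z\in\UD$
\[\frac{1-|\varphi(z)|^2}{1-|z|^2}\ \ge\ \frac{|1-\overline\tau\varphi(z)|^2}{|1-\overline\tau z|^2},\]
which is the Schwarz--Pick inequality relative to $\tau$ if $\tau\in\UD$, and the Julia inequality above together with $\alpha_\varphi(\tau)\le1$ if $\tau\in\UC$. Let $\sigma\ne\tau$ be a contact point. If $\sigma$ is not regular then $\alpha_\varphi(\sigma)=+\infty$ by Theorem~\ref{LM_Julia} and nothing is to be proved; if it is regular, a short estimate from Theorem~\ref{LM_Julia}(iii) yields $\lim_{r\to1^-}(1-|\varphi(r\sigma)|)/(1-r)=\alpha_\varphi(\sigma)$ while $\varphi(r\sigma)\to\varphi(\sigma)$ radially. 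Putting $z=r\sigma$ and letting $r\to1^-$ in the last inequality (the factor $(1+|r\sigma|)/(1+|\varphi(r\sigma)|)$ tends to $1$) gives $\alpha_\varphi(\sigma)\ge|1-\overline\tau\varphi(\sigma)|^2/|1-\overline\tau\sigma|^2$.

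It remains to treat the equality case, which I would do by a rigidity argument. Assume equality at $\sigma$; then $\varphi(\sigma)\ne\tau$ (otherwise the right--hand side is $0<\alpha_\varphi(\sigma)$). Suppose first $\tau\in\UC$ and set $P(w):=(1-|w|^2)/|1-\overline\tau w|^2=\Re\frac{\tau+w}{\tau-w}$; then $u(z):=P(\varphi(z))-P(z)$ is harmonic and, by the inequality just used, nonnegative on $\UD$, and expanding $1-|\varphi(r\sigma)|^2=(1-r^2)(\alpha_\varphi(\sigma)+o(1))$ one checks that equality at $\sigma$ is equivalent to $u(r\sigma)=o(1-r^2)$ as $r\to1^-$. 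Representing $u$ by its Herglotz measure $\mu\ge0$ on $\UC$, so that $u(r\sigma)/(1-r^2)=\int_{\UC}|r\sigma-\xi|^{-2}\,d\mu(\xi)$, Fatou's lemma forces $\mu=0$; hence $u\equiv0$, so $\frac{\tau+\varphi(z)}{\tau-\varphi(z)}-\frac{\tau+z}{\tau-z}$ is a purely imaginary constant and $\varphi$ is a M\"obius automorphism fixing $\tau$. When $\tau\in\UD$, conjugating to $\tau=0$ turns the equality into $\alpha_\varphi(\sigma)=1$, equivalently $\log(|\varphi(r\sigma)|/r)=o(1-r^2)$; since $-\log|\varphi(z)/z|$ is nonnegative and superharmonic, the same Fatou argument applied to its Riesz representation (Poisson part plus Green potential) makes it vanish identically, so $\varphi$ is a rotation --- case~(i). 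The main obstacle is exactly this last step: one must promote the single boundary datum at $\sigma$ into a global identity on $\UD$, which hinges on the potential--theoretic representation and a careful comparison of the decay rates at $\sigma$; a secondary technical point is, in the second paragraph, ruling out interior accumulation of orbits and separately handling the M\"obius case.
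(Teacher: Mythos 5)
The paper does not prove this theorem: it states it as a classical result with a reference to Abate's book, so there is no ``paper's own proof'' to compare against. Your proof of the trichotomy and of the inequality $\alpha_\varphi(\sigma)\ge|1-\overline\tau\varphi(\sigma)|^2/|1-\overline\tau\sigma|^2$ is the standard one and is correct: Schwarz--Pick for an interior fixed point, the Brouwer/\,Julia argument producing $\tau\in\UC$ in the fixed-point-free case (using Theorem~\ref{LM_Julia}(iii) with $A=1$ to get a BRFP with $\alpha_\varphi(\tau)\le1$), and then the radial limit $z=r\sigma\to\sigma$ in the Julia--Schwarz--Pick inequality to obtain the \emph{moreover} inequality. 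A minor stylistic point: invoking ``Vitali'' for the locally uniform convergence of iterates is not quite the right name; the clean argument is Montel normality plus the identification of all subsequential limits as the constant $\tau$. Also you leave the hyperbolic/parabolic automorphism subcase of the iterate convergence as ``one computes,'' which is acceptable but terse.

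The genuine issue is in the equality case. Your rigidity argument is sound as far as it goes: from $u\equiv0$ you correctly conclude that $\varphi$ is a M\"obius automorphism fixing $\tau$. But when $\tau\in\UC$ this is a \emph{parabolic} automorphism, which falls under case~(iii), not case~(i) --- so your argument does not yield the theorem's assertion ``equality occurs only in case~(i),'' and you do not notice or address the gap. In fact this is not a flaw in your reasoning but in the statement you were asked to prove: a direct computation with a parabolic automorphism $\varphi$ fixing $\tau=1$ (so $\frac{1+\varphi(z)}{1-\varphi(z)}=\frac{1+z}{1-z}+ic$, $c\in\R\setminus\{0\}$) gives, for any $\sigma\in\UC\setminus\{1\}$,
\[
\alpha_\varphi(\sigma)=|\varphi'(\sigma)|=\frac{|1-\varphi(\sigma)|^2}{|1-\sigma|^2},
\]
which is exactly equality, yet this $\varphi$ is in case~(iii). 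So the correct conclusion of the rigidity argument is ``equality occurs only when $\varphi$ is an elliptic or parabolic element of $\Moeb(\UD)$,'' which is what you actually proved. You should have flagged this discrepancy explicitly rather than presenting the argument as if it established the claim as stated; as written, a reader could be misled into thinking your argument (incorrectly) forces $\tau\in\UD$. In the case $\tau\in\UD$ your Riesz/Fatou argument for the superharmonic function $-\log|\varphi(z)/z|$ is correct and does land in case~(i).
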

\noindent The point $\tau$ is called the {\sl Denjoy\,--\,Wolff point} of $\v$ (or, abbreviated, the \textsl{DW-point}).

\subsection{Semigroups and infinitesimal generators}\label{semig} A semigroup $(\phi_t)$ of holomorphic
self-maps of $\D$ is a continuous homomorphism between the
additive semigroup $(\R^+, +)$ of positive real numbers and the
semigroup $({\sf Hol}(\D,\D),\circ)$ of holomorphic self-maps
of $\D$ with respect to the composition, endowed with the
topology of uniform convergence on compacta.

By Berkson\,--\,Porta's theorem \cite{Berkson-Porta}, if $(\phi_t)$
is a semigroup in ${\sf Hol}(\D,\D)$ then $t\mapsto \phi_t(z)$
is analytic and there exists a unique holomorphic vector field
$G:\D\to \C$ such that
\[
\frac{\de \phi_t(z)}{\de
t}=G(\phi_t(z)).
\]
This vector field $G$, called the  {\sl infinitesimal generator} of $(\phi_t)$,  is {\sl semicomplete} in the sense that the Cauchy problem
\[
\begin{cases}
\stackrel{\scriptscriptstyle\bullet}{w}=G(w(t)),\\
w(0)=z,
\end{cases}
\]
has a solution $w^z:[0,+\infty)\to \D$ for any $z\in \D$.
Conversely, any semicomplete holomorphic vector field in $\D$
generates a semigroup in $\Hol(\D,\D)$.

Let $G\not\equiv 0$ be the infinitesimal generator of a one-parameter semigroup $(\phi_t)$. Then there exists a unique
$\tau\in\oD$ and a unique holomorphic $p:\D\to \C$  with $\Re
p(z)\geq 0$ such that the following formula, known as the {\sl Berkson\,--\,Porta
formula}, holds
\[
G(z)=(z-\tau)(\overline{\tau}z-1)p(z).
\]
The point $\tau$ in the Berkson\,--\,Porta formula turns out to be
the DW-point of all $\phi_t$'s different from~$\id_\UD$.
Moreover, if $\tau\in \de\D$, then $\phi_t'(\tau)=e^{\beta t}$ for some $\beta\leq 0$, see \cite[Theorem~(1.7) on~p.\,19]{Siskakis-tesis}.

\begin{definition} A {\sl boundary regular fixed point} of a semigroup
$(\phi_t)$ is a point $\sigma\in \de \D$ which is a boundary regular fixed point of $\phi_t$ for any $t\ge0$.
\end{definition}
\begin{remark}
In fact, the condition in the above definition can be replaced by the weaker assumption that $\sigma$ is a BRFP for \textit{some} $\phi_t\neq\id_\UD$, see~\cite[Theorems 1 and 5]{CDP}.
\end{remark}

\begin{definition}
A {\sl boundary regular null point} (abbreviated, \textsl{BRNP}) of an infinitesimal generator $G$ is a point $\sigma\in \UC:=\de
\D$ such that
\begin{equation}\label{EQ_defBRNP}
G'(\sigma):=\angle\lim_{z\to \sigma}\frac{G(z)}{z-\sigma}
\end{equation}
exists finitely. The number $G'(\sigma)$ is called the {\sl dilation} of $G$ at $\sigma$.
\end{definition}
\begin{remark}
The number~$G'(\sigma)$ in the above definition is always real, see \cite[Theorem~1]{CDP2}.
\end{remark}

In the following theorem we collect some known results concerning infinitesimal generators of
one-parameter semigroups with a BRFP at a given point~$\sigma\in\UC$. By $\clP$ we denote the class of all
$p\in\Hol(\UD,\Complex)$ such that $\Re p(z)\ge0$ for all $z\in\UD$.

\begin{theorem}\label{TH_G-phi}
Let $(\phi_t)$ be a one-parameter semigroup in~$\Hol(\UD,\UD)$ and $G$ its infinitesimal generator. Let $\sigma\in\UC$ and
$\lambda\in\Real$. Then the following statements are equivalent:
\begin{mylist}
\item[(i)] for each $t\ge0$, the function $\phi_t$ has a BRFP at $\sigma$ with $\phi_t'(\sigma)=e^{\lambda t}$;
\item[(ii)] $G$
    has a BRNP at $\sigma$ of dilation $G'(\sigma)=\lambda$;
\item[(iii)] there exits a function $p\in\clP$ such that  $\angle\lim_{z\to
    \sigma}(z-\sigma)p(z)=0$ and
\begin{equation}\label{EQ_BCM-repr}
G(z)=(z-\sigma)(\overline \sigma z-1)\left(p(z)-\frac{\lambda}{2} \frac{\sigma+z}{\sigma-z}\right)\quad \text{ for all $z\in\UD$.}
\end{equation}
\end{mylist}
\end{theorem}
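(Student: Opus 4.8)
The plan is to conjugate everything to the right half-plane $\Hr:=\{w\in\C:\Re w>0\}$ via the Cayley map $C(z):=(\sigma+z)/(\sigma-z)$, which sends $\sigma$ to $\infty$, and to prove the cycle $(iii)\Rightarrow(ii)$, $(i)\Rightarrow(iii)$, $(ii)\Rightarrow(i)$; the value $\phi_t'(\sigma)=e^{\lambda t}$ and formula~\eqref{EQ_BCM-repr} come out along the way, and many of the ingredients are already available in~\cite{Berkson-Porta,Siskakis-tesis,CDP,CDP2}. The implication $(iii)\Rightarrow(ii)$ is a one-line computation: since $|\sigma|=1$ we have $\overline\sigma z-1=\overline\sigma(z-\sigma)$, so~\eqref{EQ_BCM-repr} gives $G(z)/(z-\sigma)=\overline\sigma(z-\sigma)\,p(z)+\tfrac{\lambda}{2}\,\overline\sigma(\sigma+z)$; letting $z\to\sigma$ non-tangentially, the first summand vanishes by $\angle\lim_{z\to\sigma}(z-\sigma)p(z)=0$ while the second tends to $\lambda|\sigma|^2=\lambda$, so $G$ has a BRNP at $\sigma$ with $G'(\sigma)=\lambda$.

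For $(i)\Rightarrow(iii)$ put $\psi_t:=C\circ\phi_t\circ C^{-1}$, a one-parameter semigroup of $\Hr$ with generator $H(w)=C'(C^{-1}(w))\,G(C^{-1}(w))$; a short computation shows that, transported through $C$, condition~\eqref{EQ_BCM-repr} is equivalent to $H(w)=-\lambda w+2P(w)$ with $P$ holomorphic on $\Hr$, $\Re P\ge0$, and $\angle\lim_{w\to\infty}P(w)/w=0$. By $(i)$ each $\psi_t$ fixes $\infty$ with $\angle\lim_{w\to\infty}\psi_t(w)/w=e^{-\lambda t}$; since the horocycles at $\infty$ in $\Hr$ are the half-planes $\{\Re w>r\}$, Julia's lemma forces $\Re\psi_t(w)\ge e^{-\lambda t}\,\Re w$ for all $w\in\Hr$. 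Both sides coincide at $t=0$, so differentiating at $t=0^+$ gives $\Re H(w)\ge-\lambda\,\Re w$; hence $P(w):=\tfrac12(H(w)+\lambda w)$ has non-negative real part on $\Hr$, while $\angle\lim_{w\to\infty}P(w)/w=0$ follows from the Nevanlinna representation of $P$ together with the value $\psi_t'(\infty)=e^{-\lambda t}$. Transporting back through $C$, this is~\eqref{EQ_BCM-repr}; in particular $(i)\Rightarrow(ii)$.

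For $(ii)\Rightarrow(i)$ --- the substantive step --- I would show that the BRNP of $G$ forces $\sigma$ to be a boundary regular fixed point of every $\phi_t$. The natural route is a Gronwall estimate along trajectories: fixing $z\in\UD$ and writing $G(z)=(z-\tau)(\overline\tau z-1)Q(z)$ with $Q\in\clP$ (Berkson--Porta), one differentiates the horocyclic quantity $h(t):=|\sigma-\phi_t(z)|^2/(1-|\phi_t(z)|^2)$ and checks --- using $\Re Q\ge0$ to absorb the error terms and $\angle\lim_{z\to\sigma}G(z)/(z-\sigma)=\lambda$ for the leading term --- that $\dot h(t)\le\lambda\,h(t)$; integrating shows $\phi_t$ maps each horocycle $E(\sigma,R):=\{z:|\sigma-z|^2/(1-|z|^2)<R\}$ into $E(\sigma,e^{\lambda t}R)$, which by the Julia--Wolff--Carath\'eodory theorem (Theorem~\ref{LM_Julia}) means $\phi_t(\sigma)=\sigma$ and $\alpha_{\phi_t}(\sigma)\le e^{\lambda t}$. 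The reverse inequality follows from the semigroup law $\phi_t=\phi_s\circ\phi_{t-s}$ (or from~\eqref{EQ_BCM-repr}), giving $\phi_t'(\sigma)=\alpha_{\phi_t}(\sigma)=e^{\lambda t}$, which is $(i)$. As a cross-check, $(iii)\Rightarrow(i)$ is immediate on $\Hr$ by integrating the generator equation for $H(w)=-\lambda w+2P(w)$: one obtains $e^{\lambda t}\psi_t(w)=w+2\int_0^t e^{\lambda s}P(\psi_s(w))\,ds$, and $\Re P\ge0$ together with the sublinear decay of $P$ at $\infty$ yields $\angle\lim_{w\to\infty}\psi_t(w)/w=e^{-\lambda t}$.

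The step I expect to be the main obstacle is $(ii)\Rightarrow(i)$: proving the differential inequality $\dot h\le\lambda h$, i.e. converting the merely non-tangential datum $\angle\lim_{z\to\sigma}G(z)/(z-\sigma)=\lambda$ into an estimate valid along whole trajectories, and --- in the half-plane reduction --- the two-sided control needed to upgrade $\Re\psi_t(w)\ge e^{-\lambda t}\Re w$ to the exact asymptotics $\psi_t(w)/w\to e^{-\lambda t}$. This is the classical but genuinely delicate mechanism behind boundary Denjoy--Wolff points, and it is precisely here that the finiteness of $G'(\sigma)$ (equivalently $\angle\lim_{z\to\sigma}(z-\sigma)p(z)=0$) is used. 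Everything else is routine bookkeeping, large parts of which can be quoted from~\cite{Berkson-Porta,Siskakis-tesis,CDP,CDP2}.
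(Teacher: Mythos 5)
The paper does not actually prove Theorem~\ref{TH_G-phi}; it is a compilation of known results, and the Remark directly below it attributes (i)$\Leftrightarrow$(ii) to~\cite{CDP,CDP2} (see also~\cite{ES}) and (ii)$\Leftrightarrow$(iii) to~\cite{BCD3} (see also~\cite{Sh2}). So there is no internal proof to measure your attempt against, and an honest solution of this exercise would either quote those sources, as the paper does, or reproduce their arguments.

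Judged on its own, your outline gets the easy parts right and is candid about where the difficulty lies, but the difficulty is not resolved. The computation for (iii)$\Rightarrow$(ii) is correct. The half-plane conjugation is handled correctly: $H(w)=C'\bigl(C^{-1}(w)\bigr)G\bigl(C^{-1}(w)\bigr)=-\lambda w+2P(w)$ with $P=p\circ C^{-1}$, $\Re P\ge0$, and the vanishing condition becomes $\angle\lim_{w\to\infty}P(w)/w=0$. In (i)$\Rightarrow$(iii), differentiating the Julia inequality $\Re\psi_t(w)\ge e^{-\lambda t}\Re w$ at $t=0^+$ does give $\Re\bigl(H(w)+\lambda w\bigr)\ge0$, but the conclusion $\angle\lim_{w\to\infty}P(w)/w=0$ is not automatic from ``the Nevanlinna representation'': that representation only tells you this non-tangential limit equals some $\beta\ge0$. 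To rule out $\beta>0$ you need another input --- the most natural being that (iii)$\Rightarrow$(i) would then force $\psi_t'(\infty)=e^{-(\lambda-2\beta)t}$, against hypothesis --- so the logical order is more entangled than the clean three-arrow cycle you announce.

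The real gap is (ii)$\Rightarrow$(i), which you flag as the substantive step but do not actually carry out. The Gronwall estimate $\dot h(t)\le\lambda h(t)$ is equivalent, on the half-plane, to the \emph{global} inequality $\Re H(\omega)\ge-\lambda\Re\omega$ for all $\omega\in\Hr$. Hypothesis (ii) supplies only the \emph{non-tangential limit} $H(\omega)/\omega\to-\lambda$ at the single boundary point $\infty$; these are very different pieces of information. Saying that ``$\Re Q\ge0$ absorbs the error terms'' does not bridge this gap: pointwise non-negativity of $\Re Q$ gives no control on $G(z)/(z-\sigma)-\lambda$ away from $\sigma$, and the cross-terms that appear when you differentiate $h$ do not have a sign. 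Converting the boundary limit into the global inequality is exactly the content of the representation (iii), which rests on a Herglotz/Nevanlinna analysis of the representing measure of $Q$ near $\sigma$ (this is what~\cite{BCD3} does). In other words, your proposed (ii)$\Rightarrow$(i) is in practice (ii)$\Rightarrow$(iii)$\Rightarrow$(i), and the first arrow --- the hard one --- is missing. If you are willing to quote~\cite{CDP,CDP2,BCD3} for the hard implications, the remaining bookkeeping you supply is sound; but then, as in the paper, the theorem is being cited rather than proved.
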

\begin{remark}
The equivalence between (i) and (ii) is from  \cite[Theorem 1]{CDP}, \cite[Theorem 2]{CDP2}, see also \cite{ES}. The Berkson\,--\,Porta type representation at BRNP given by the equivalence between (iii) and (ii) is in \cite{BCD3} (see also \cite{Sh2}). An analogous representation taking into account the position of the DW-point (which is
assumed in this case to be different from the considered BRFP) has been recently given by Goryainov and
Kudryavtseva~\cite{Goryainov-Kudryavtseva}.
\end{remark}

\begin{lemma}\label{LM_G-est} Let $G$ be an infinitesimal generator. Suppose
that $G$ has a BRNP at~$\sigma\in\UC$ with dilation~$\lambda$. Then for all $z\in\UD$,
\begin{equation}\label{EQ_G-est}
|G(z)|\le4\Big(\sqrt2|G(0)|+(\sqrt2+1)\frac{|\lambda|}{2}\Big)\frac{|\sigma-z|^2}{1-|z|^2}.
\end{equation}
\end{lemma}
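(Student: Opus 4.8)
The estimate we need is a bound on $|G(z)|$ in terms of $|G(0)|$, $|\lambda|$, and the canonical ``boundary'' factor $|\sigma-z|^2/(1-|z|^2)$. The natural starting point is the Berkson\,--\,Porta representation combined with Theorem~\ref{TH_G-phi}: since $G$ has a BRNP at $\sigma$ with dilation $\lambda$, assertion~(iii) of that theorem gives a function $p\in\clP$ with $\angle\lim_{z\to\sigma}(z-\sigma)p(z)=0$ and
\[
G(z)=(z-\sigma)(\overline\sigma z-1)\Big(p(z)-\frac{\lambda}{2}\,\frac{\sigma+z}{\sigma-z}\Big)\qquad(z\in\UD).
\]
Expanding, $G(z)=(z-\sigma)(\overline\sigma z-1)p(z)-\frac{\lambda}{2}(\overline\sigma z-1)(\sigma+z)$. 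I would estimate the two terms separately. The second (``rotation'') term is elementary: $(\overline\sigma z-1)(\sigma+z)$ is a quadratic polynomial, and one checks by a direct computation — writing $\overline\sigma z-1=\overline\sigma(z-\sigma)$ and then bounding — that $|(\overline\sigma z-1)(\sigma+z)|\le 2\,|\sigma-z|^2/(1-|z|^2)\cdot(\text{something bounded})$; more carefully, $|\sigma+z|\le 2$ and $|\overline\sigma z-1|=|z-\sigma|$, while $|z-\sigma|\le 2$ and $1-|z|^2\le 2(1-|z|)\le 2|z-\sigma|$ give $|z-\sigma|\le\text{const}\cdot|\sigma-z|^2/(1-|z|^2)$ only in a crude form, so I will instead use the sharper relation $1-|z|\le|1-\overline\sigma z|=|z-\sigma|$ to get $|z-\sigma|^2/(1-|z|^2)\ge (1-|z|)/2$... hence one cleanly obtains a bound of the stated shape with the constant $(\sqrt2+1)$ arising from optimizing $|\sigma+z|$ against $1/(1-|z|^2)$.

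The first term is the substantive one, and its control of $|p(z)|$ is where the real work lies. The key is a \emph{Julia-type inequality for $p$ forced by the vanishing condition} $\angle\lim_{z\to\sigma}(z-\sigma)p(z)=0$. Because $\Re p\ge 0$, the function $p$ maps $\UD$ into the right half-plane; the hypothesis that $(z-\sigma)p(z)\to0$ non-tangentially says, roughly, that $p$ has at worst a ``sub-pole'' singularity at $\sigma$. I would transfer $p$ to the half-plane via the Cayley transform and invoke the Herglotz/Riesz representation $p(z)=ic+\int_{\UC}\frac{\zeta+z}{\zeta-z}\,d\mu(\zeta)$ with $\mu\ge0$; the vanishing condition at $\sigma$ translates into $\mu(\{\sigma\})=0$, which then yields the pointwise bound $|p(z)-p(0)|\le$ (const)$\cdot|\sigma-z|/(1-|z|)$ — this is exactly the type of estimate that powers the Julia\,--\,Wolff\,--\,Carath\'eodory theorem. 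Combined with $|(z-\sigma)(\overline\sigma z-1)|=|z-\sigma|^2$ and $|p(0)|\le$ (bound involving $|G(0)|$ and $|\lambda|$ extracted by setting $z=0$ in the Berkson\,--\,Porta formula, noting $G(0)=\sigma\cdot p(0)-\frac{\lambda}{2}(-\sigma)=\sigma(p(0)+\lambda/2)$, so $|p(0)|\le|G(0)|+|\lambda|/2$), this gives $|(z-\sigma)(\overline\sigma z-1)p(z)|\le|z-\sigma|^2\big(|p(0)|+\text{const}\cdot|\sigma-z|/(1-|z|)\big)$, and after bounding $|\sigma-z|\le 2$ and collecting the geometric factors one arrives at the claimed inequality.

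\textbf{Main obstacle.} The delicate point is extracting the sharp constants $4$, $\sqrt2$, and $\sqrt2+1$: the estimate must be \emph{uniform over all} $z\in\UD$, including $z$ near $\sigma$ where both $|p(z)|$ and $1/(1-|z|^2)$ blow up, and near the antipode $-\sigma$ where $|\sigma-z|$ is bounded below. I expect the $\sqrt2$'s to come from the worst-case comparison $1-|z|^2$ vs.\ $|1-\overline\sigma z|^2$ on the boundary arc (they are comparable only up to a factor of $2$, with the extremal configuration being $z\to-\sigma$ along a radius), and the overall factor $4$ to absorb the crude replacement of $|\sigma-z|$ by $2$ in the $p$-term together with the triangle inequality splitting $p=p(0)+(p-p(0))$. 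So the proof itself is short once the right two lemmas — an elementary polynomial bound and a Julia inequality for $p$ with $\mu(\{\sigma\})=0$ — are in place; the only genuinely fiddly part is tracking the numerical constants through the split, and I would not be surprised if the stated constant is simply a convenient over-estimate rather than sharp, in which case a cruder argument using $|p(z)|\le \Re p(z)\cdot\frac{1+|z|}{1-|z|}$-type bounds (Harnack) plus $\Re p$ integrability against the vanishing condition would suffice.
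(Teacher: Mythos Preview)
Your plan has a genuine gap: the ``Julia-type inequality'' you intend to extract from $\mu(\{\sigma\})=0$, namely
\[
|p(z)-p(0)|\le C\,\frac{|\sigma-z|}{1-|z|}\quad\text{with }C=C(|p(0)|),
\]
is false. Along the radius $z=r\sigma$ the right-hand side equals~$C$, so the inequality would force $p$ to stay bounded on that radius; but $\mu(\{\sigma\})=0$ does not prevent $|p(r\sigma)|\to\infty$. For instance, with $\sigma=1$ take $p_\varepsilon(z)=\tfrac12\big(\frac{e^{i\varepsilon}+z}{e^{i\varepsilon}-z}+\frac{e^{-i\varepsilon}+z}{e^{-i\varepsilon}-z}\big)$: then $p_\varepsilon(0)=1$ and $\mu_\varepsilon(\{1\})=0$ for every $\varepsilon>0$, yet $p_\varepsilon(1-\varepsilon)\asymp \varepsilon^{-1}$. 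So no bound of the claimed form with $C$ depending only on $|p(0)|$ can hold, and without it your assembly does not reach an estimate involving only $|G(0)|$ and $|\lambda|$.

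The fix --- and this is what the paper actually does --- is to drop the vanishing condition entirely: it plays no role in the lemma. One simply bounds
\[
|G(z)|\le |\sigma-z|^2\Big(|p(z)|+\frac{|\lambda|}{2}\Big|\frac{\sigma+z}{\sigma-z}\Big|\Big)
\]
directly from~\eqref{EQ_BCM-repr}, and then uses the standard growth estimate for the class~$\clP$,
\[
|p(z)|\le \sqrt2\,|p(0)|\,\frac{1+|z|}{1-|z|},
\]
together with the elementary bound $|\frac{\sigma+z}{\sigma-z}|\le\frac{1+|z|}{1-|z|}$ and the inequality $\frac{1+|z|}{1-|z|}\le\frac{4}{1-|z|^2}$. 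Since $G(0)=\sigma\big(p(0)-\lambda/2\big)$ (you have a sign slip here, though it does not affect $|p(0)|\le|G(0)|+|\lambda|/2$), this gives exactly
\[
|G(z)|\le 4\Big(\sqrt2\,|G(0)|+(\sqrt2+1)\frac{|\lambda|}{2}\Big)\frac{|\sigma-z|^2}{1-|z|^2}.
\]
So the constants $\sqrt2$ and $\sqrt2+1$ come not from any boundary analysis but simply from the $\clP$-growth bound applied to $|p(0)|\le|G(0)|+|\lambda|/2$; the proof is two lines once the representation is in hand. Your fallback remark about a ``Harnack-type'' bound is in fact the entire argument.
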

\begin{proof}
Use representation~\eqref{EQ_BCM-repr} in Theorem~\ref{TH_G-phi}. Then $G(0)=\sigma(p(0)-\lambda/2)$ and hence $|p(0)|\le|G(0)|+|\lambda|/2$.
Furthermore, $|G(z)|\le|\sigma-z|^2\big(|p(z)|+|\frac{\lambda}{2} \frac{\sigma+z}{\sigma-z}|\big)$ for all $z\in\UD$, which being combined with the growth estimates for the class $\clP$ (see, {\it e.g.}
\cite[inequality (11) on~p.\,40]{Pommerenke}) results in
$$
|G(z)|\le|z-\sigma|^2\Big(\sqrt2\big(|G(0)|+|\lambda|/2\big)\frac{1+|z|}{1-|z|}+\frac{|\lambda|}{2}\frac{1+|z|}{1-|z|}\Big).
$$
Taking into account that $\frac{1+|z|}{1-|z|}\le4/(1-|z|^2)$, one easily obtains~\eqref{EQ_G-est}.
\end{proof}

\subsection{Evolution families and generalized Loewner\,--\,Kufarev equation}\label{mmm}

The three main objects of the generalized Loewner theory are Herglotz vector
fields, evolution families and Loewner chains (see \cite{BCM1} and
\cite{RMIA}).

\begin{definition}
\label{DH} Let $d\in [1,+\infty]$. A {\sl Herglotz
 vector field of order $d$} on the unit disc
$\mathbb{D}$ is a function
$G:\mathbb{D}\times\lbrack0,+\infty)\rightarrow \mathbb{C}$
with the following properties:

\begin{enumerate}
\item[H1.] For all $z\in\mathbb{D},$ the function $\lbrack
0,+\infty)\ni t\mapsto G(z,t)$ is measurable;

\item[H2.] For all $t\in\lbrack0,+\infty),$ the function $
\mathbb{D}\ni z\mapsto G(z,t)$ is holomorphic;

\item[H3.] For any compact set $K\subset\mathbb{D}$ and for all $T>0$ there
exists a non-negative function $k_{K,T}\in
L^{d}([0,T],\mathbb{R})$ such that $|G(z,t)|\leq k_{K,T}(t)$
for all $z\in K$ and for almost every $t\in\lbrack0,T].$
\item[H4.] For almost every $t\in [0,+\infty)$,
$G(\cdot, t)$ is an infinitesimal generator.
\end{enumerate}
\end{definition}

In \cite[Theorem 4.8]{BCM1} it is proved that any Herglotz
vector field $G(z,t)$ has an essentially unique representation by means of a
Berkson\,--\,Porta type formula, namely,
$G(z,t)=(z-\tau(t))(\overline{\tau(t)}z-1)p(z,t)$, where
$\tau:[0,+\infty)\to \oD$ is a measurable function and
$p:\D\times [0,+\infty)\to \C$ has the property that for all
$z\in\mathbb{D},$ the function $\lbrack0,+\infty )\ni t \mapsto
p(z,t)\in\mathbb{C}$ belongs to
$L_{loc}^{d}([0,+\infty),\mathbb{C})$; for all
$t\in\lbrack0,+\infty),$ the function $\mathbb{D}\ni z \mapsto
p(z,t)\in\mathbb{C}$ is holomorphic; for all $z\in\mathbb{D}$
and for all $t\in\lbrack0,+\infty),$ we have $\Re p(z,t)\geq0.$

\begin{definition}\label{DE}
Let $d\in [1,+\infty]$. A family $(\varphi_{s,t})_{0\leq s\leq t<+\infty}$ of
holomorphic self-maps of the unit disc  is called an {\sl evolution
family of order $d$} if it satisfies the following conditions:
\begin{enumerate}
\item[EF1.] $\varphi_{s,s}=\id_{\mathbb{D}}$ for any~$s\ge0$;

\item[EF2.] $\varphi_{s,t}=\varphi_{u,t}\circ\varphi_{s,u}$ whenever $0\leq
s\leq u\leq t<+\infty$;

\item[EF3.] for any $z\in\mathbb{D}$ and any $T>0$ there exists a
non-negative function $k_{z,T}\in L^{d}([0,T],\mathbb{R})$
such that
\[
|\varphi_{s,u}(z)-\varphi_{s,t}(z)|\leq\int_{u}^{t}k_{z,T}(\xi)d\xi
\]
whenever $0\leq s\leq u\leq t\leq T.$
\end{enumerate}
\end{definition}

The elements of evolution families are univalent
\cite[Corollary 6.3]{BCM1}.

In \cite[Theorem 1.1, Theorem 6.6]{BCM1} it is proved that
there is a one-to-one correspondence between evolution families
and Herglotz vector fields:

\begin{theorem}\label{LODE}
For any evolution family $(\v_{s,t})$ of order $d\in[1,+\infty]$ there exists a unique
(up to changing on a set of measure zero
 in $t$) Herglotz vector field $G(z,t)$ of order $d$ such
that for all $z\in \D$
\begin{equation}\label{main-eq}
\frac{\de \v_{s,t}(z)}{\de t}=G(\v_{s,t}(z),t) \quad \text{for a.e.
$t\in [0,+\infty)$}.
\end{equation}
Conversely, for any Herglotz vector field $G(z,t)$ of order
$d\in[1,+\infty]$
there exists a unique evolution family $(\v_{s,t})$ of order
$d$  such that \eqref{main-eq} is satisfied.
\end{theorem}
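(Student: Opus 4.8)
The plan is to prove the two halves of the correspondence separately; the uniqueness statements then make the two constructions mutually inverse. \emph{From an evolution family to a Herglotz vector field.} The initial, and technically most delicate, step is to upgrade the pointwise hypothesis~EF3 to a \emph{locally uniform} one: for every compact $K\subset\D$ and every $T>0$ there exists $k_{K,T}\in L^{d}([0,T],\R)$ with $|\varphi_{s,u}(z)-\varphi_{s,t}(z)|\le\int_{u}^{t}k_{K,T}(\xi)\,d\xi$ for all $z\in K$ and all $0\le s\le u\le t\le T$. This is obtained by combining EF3 with the cocycle law EF2 and the normality of $\Hol(\D,\D)$ --- Schwarz--Pick furnishes a common Lipschitz constant on $K$ for all maps $\varphi_{s,t}$, which lets one propagate the pointwise control across $K$. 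Granting this, fix a countable dense set $D\subset\D$; for each $w\in D$ the function $t\mapsto\varphi_{0,t}(w)$ is locally absolutely continuous, hence differentiable off a null set $N_{w}$, and set $N:=\bigcup_{w\in D}N_{w}$, enlarged to also avoid the non-Lebesgue points of the (countably many) functions $k_{K,T}$ obtained by letting $K$ and $T$ run through exhaustions of $\D$ and of $[0,+\infty)$. For $t\notin N$, the relation $\varphi_{0,t+h}(w)=\varphi_{t,t+h}\big(\varphi_{0,t}(w)\big)$ shows that the holomorphic difference quotients $\psi_{h}(\zeta):=h^{-1}\big(\varphi_{t,t+h}(\zeta)-\zeta\big)$ converge, as $h\to0^{+}$, at every point of $\varphi_{0,t}(D)$; since $\varphi_{0,t}$ is univalent (in particular nonconstant), $\varphi_{0,t}(D)$ is dense in the nonempty open set $\varphi_{0,t}(\D)$, and the $\psi_{h}$ are locally bounded by the upgraded estimate, so Vitali's theorem yields locally uniform convergence to a holomorphic limit $G(\cdot,t)$; put $G(\cdot,t):=0$ for $t\in N$.

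It remains to check the Herglotz axioms and equation~\eqref{main-eq}. Holomorphy in $z$ (H2) is built into the construction; measurability in $t$ (H1) holds because $G(z,\cdot)$ is an a.e.\ pointwise limit of measurable difference quotients; the estimate H3 is inherited from the upgraded EF3. The substantial axiom is H4, for which I would exhibit the full Berkson--Porta structure of $G$: choosing $\tau(t)\in\oD$ as a limit point of the Denjoy--Wolff points of $\varphi_{t,t+h}$ as $h\to0^{+}$ (measurability of $t\mapsto\tau(t)$ following from the locally uniform convergence) and putting $p(z,t):=G(z,t)\big/\big((z-\tau(t))(\overline{\tau(t)}z-1)\big)$, one passes to the limit $h\to0^{+}$ in Julia's inequality for $\varphi_{t,t+h}$ at $\tau(t)$ to obtain $\Re p(\cdot,t)\ge0$; thus $p(\cdot,t)\in\clP$ and $G(\cdot,t)$ is an infinitesimal generator by the Berkson--Porta formula. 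To get~\eqref{main-eq}, differentiate $\varphi_{s,t+h}(z)=\varphi_{t,t+h}\big(\varphi_{s,t}(z)\big)$ at $h=0^{+}$: the right derivative --- hence the a.e.\ derivative --- of the locally absolutely continuous function $t\mapsto\varphi_{s,t}(z)$ equals $G(\varphi_{s,t}(z),t)$. Finally, if $\2{G}$ is another Herglotz field satisfying~\eqref{main-eq}, then for a.e.\ $t$ one has $G(\varphi_{0,t}(w),t)=\2{G}(\varphi_{0,t}(w),t)$ for all $w\in D$, so density of $\varphi_{0,t}(D)$ in $\varphi_{0,t}(\D)$ and the identity theorem force $G(\cdot,t)=\2{G}(\cdot,t)$: uniqueness up to a null set in~$t$.

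\emph{From a Herglotz vector field to an evolution family.} I would read~\eqref{main-eq} with initial condition $\varphi_{s,s}=\id_{\D}$ as a Carath\'eodory initial value problem: by H1--H2 its right-hand side is measurable in $t$ and holomorphic --- hence locally Lipschitz --- in $z$, and by H3 it is dominated on compacta by an $\loc$ function, so Carath\'eodory's existence and uniqueness theorem yields, for each $z\in\D$, a unique maximal absolutely continuous solution $t\mapsto\varphi_{s,t}(z)$, holomorphic in $z$ (via uniform convergence of the Picard iterates, or a $\debar$-argument). That this solution is global and stays in $\D$ follows from H4 together with the Berkson--Porta representation $G(z,t)=(z-\tau(t))(\overline{\tau(t)}z-1)p(z,t)$, $\Re p(\cdot,t)\ge0$: a Harnack bound for $p(\cdot,t)$ converts the equation into a differential inequality for $-\log\!\big(1-|\varphi_{s,t}(z)|^{2}\big)$ with $\loc$ right-hand side, so the orbit neither reaches $\de\D$ in finite time nor leaves a compact set $K_{z,T}\subset\D$ on each $[s,T]$. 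Given $(\varphi_{s,t})$ so defined: EF1 is the initial condition; EF2 holds because $t\mapsto\varphi_{u,t}\circ\varphi_{s,u}$ and $t\mapsto\varphi_{s,t}$ solve the same equation with the same value at $t=u$; EF3 follows by integrating the equation and using $|G(\varphi_{s,\xi}(z),\xi)|\le k_{K_{z,T},T}(\xi)$; and the order $d$ is preserved since $k_{K,T}\in L^{d}$. Uniqueness of the evolution family is again immediate from ODE uniqueness. Combining the two halves shows that the constructions are mutually inverse.

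\emph{On the main obstacle.} The genuinely hard work lies in the first half: upgrading the pointwise hypothesis EF3 to a locally uniform modulus of continuity; arranging a \emph{single} exceptional null set of times outside of which the right $t$-derivative exists at every point of $\D$ and assembles into a holomorphic vector field; and, relatedly, extracting the measurable Berkson--Porta data $\tau(t),p(z,t)$ so as to identify each slice $G(\cdot,t)$ as an infinitesimal generator. Everything else reduces to standard Carath\'eodory ODE theory and to the classical theory of infinitesimal generators recalled in Section~\ref{semig}.
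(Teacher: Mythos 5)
First, a point of reference: the paper does not prove this theorem — it quotes it from \cite{BCM1} (Theorems 1.1 and 6.6) — so your proposal has to be measured against the proof given there. Your overall architecture matches it: upgrade EF3 to a locally uniform estimate, obtain $G(\cdot,t)$ as the a.e.\ right $t$-derivative via a common null set and Vitali's theorem, verify H1--H4, and treat the converse as a Carath\'eodory initial value problem whose solutions are confined to $\D$ by the Berkson--Porta/Harnack barrier. The converse half and both uniqueness statements are fine as sketched.

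The genuine gap sits exactly in the step you single out as the delicate one, and the mechanism you propose for it does not work. A common Schwarz--Pick Lipschitz constant on $K$ lets you interpolate between $z\in K$ and the nearest point $w_j$ of a finite $\delta$-net, giving $|\varphi_{s,u}(z)-\varphi_{s,t}(z)|\le 2C_K\delta+\int_u^t\max_j k_{w_j,T}(\xi)\,d\xi$; the additive error $2C_K\delta$ does not tend to $0$ with $t-u$, so no integral majorant uniform on $K$ results, and passing to the full countable dense set destroys the $L^d$ membership of the majorant. This matters downstream: pointwise boundedness of the quotients $(\varphi_{t,t+h}-\id)/h$ on a countable (hence meager) dense set does not yield the local boundedness Vitali needs, and you cannot fall back on univalence of $\varphi_{t,t+h}$ plus Koebe distortion, since in \cite{BCM1} univalence of the elements of an evolution family is itself a corollary of the correspondence being proved. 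What is actually required — and what the cited proof establishes as its key lemma — is a quantitative rigidity estimate for \emph{arbitrary} $g\in\Hol(\UD,\UD)$: for fixed distinct base points $w_1,w_2$ and a compactum $K$, one has $\sup_{z\in K}|g(z)-z|\le C_K\big(|g(w_1)-w_1|+|g(w_2)-w_2|\big)$ (the quantitative form of ``a self-map fixing two interior points is the identity''; it is proved by normalizing $g$ to fix $w_1$, writing the normalized map as $z\mapsto z\psi(z)$, and applying a Harnack-type bound to the nonnegative-real-part function $1-\psi$). Combined with EF2 this yields $k_{K,T}:=C_K(k_{w_1,T}+k_{w_2,T})\in L^d$, and the rest of your first half goes through. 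Secondarily, your verification of H4 is shakier than presented: the maps $\varphi_{t,t+h}$ may equal $\id_\UD$, their Denjoy--Wolff points may oscillate between the interior and the boundary, and at a boundary DW-point the nonnegativity of the discrete Berkson--Porta quotient is not literally Julia's inequality (an extra factor $(1-\overline{\tau}\varphi(z))/(1-\overline{\tau}z)\to1$ must be controlled). This is repairable, but the standard route — the one consistent with \cite{BCM1} — is to use that the infinitesimal generators form a closed cone characterized by hyperbolic monotonicity, and to divide $\rho\big(\varphi_{t,t+h}(z),\varphi_{t,t+h}(w)\big)-\rho(z,w)\le0$ by $h$ and let $h\to0^+$.
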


\begin{definition}\label{DL}
Let $d\in[1,+\infty]$. A family $(f_t)_{0\leq t<+\infty}$ of holomorphic maps of the
unit disc is called a {\sl Loewner chain of order $d$}, if the following conditions hold:

\begin{enumerate}
\item[LC1.]  $f_t:\D\to\C$ is univalent for all $t\geq 0$,

\item[LC2.] $f_s(\D)\subset f_t(\D)$ whenever $0\leq s < t<+\infty,$

\item[LC3.] for any compact set $K\subset\mathbb{D}$ and  any $T>0$ there exists a
non-negative function $k_{K,T}\in L^{d}([0,T],\mathbb{R})$
such that
\[
|f_s(z)-f_t(z)|\leq\int_{s}^{t}k_{K,T}(\xi)d\xi
\]
whenever $z\in K$ and $0\leq s\leq t\leq T$.
\end{enumerate}
\end{definition}

\begin{definition}
A Loewner chain~$(f_t)$ is said to be \textsl{associated with} an evolution family~$(\varphi_{s,t})$
if
\begin{equation}\label{EQ_associated}
\varphi_{s,t} = f_t^{-1} \circ f_s\quad\text{for any $t\ge s\ge0$}.
\end{equation}
\end{definition}

In \cite[Theorem 1.3, Theorem 4.1]{RMIA}, see also~\cite[Section~2.1]{SMP_decreasing}, it is
proved

\begin{theorem}\label{pav}Let $d\in[1,+\infty]$. The following statements hold:
\begin{mylist}
\item[\rm(1)] For any Loewner chain $(f_t)$ of order~$d$ the formula~\eqref{EQ_associated} defines an evolution family~$(\varphi_{s,t})$ of the same order~$d$.
Conversely, for any evolution family $(\v_{s,t})$ of
order $d$, there exists a Loewner chain $(f_t)$
of order~$d$ associated with~it.\\[.4ex]
\item[\rm(2)] Let $G(z,t)$ be the Herglotz vector field of an evolution family~$(\varphi _{s,t})$ of order~$d$, and let $(z,t)\mapsto f_{t}(z)$ be a solution to
\begin{equation}\label{low-kuf}
\frac{\partial f_{s}(z)}{\partial s}=-G(z,s)f_{s}^{\prime }(z)
\end{equation}
on $\UD\times[0,+\infty)$.
Suppose that $f_t$ is univalent in~$\UD$ for every~$t\ge0$.
Then $(f_{t})$ is a Loewner chain of order $d$ associated with
the evolution family~$(\varphi _{s,t}).$
\end{mylist}
\end{theorem}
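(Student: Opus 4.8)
The plan is to treat the three assertions in turn: the first half of~(1) and statement~(2) are essentially verifications, whereas the existence of an associated Loewner chain claimed in the second half of~(1) is where the real work lies. Throughout I will use Theorem~\ref{LODE} to pass between an evolution family and its Herglotz vector field~$G$, the identity $\partial_t\varphi_{s,t}(z)=G(\varphi_{s,t}(z),t)$ from~\eqref{main-eq}, the growth bound~H3, the regularity~EF3, and the (standard) joint continuity of evolution families.

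For ``Loewner chain $\Rightarrow$ evolution family'' I would put $\varphi_{s,t}:=f_t^{-1}\circ f_s$: by~LC2 the maps are nested, $f_s(\UD)\subseteq f_u(\UD)\subseteq f_t(\UD)$, so $\varphi_{s,t}$ is a well-defined holomorphic self-map of~$\UD$ and EF1--EF2 are immediate. By~LC3 the map $t\mapsto f_t$ is continuous into $\Hol(\UD,\C)$ for local uniform convergence, and then, using monotonicity of the ranges and Hurwitz's theorem, $(s,t,z)\mapsto\varphi_{s,t}(z)$ is continuous on $\{0\le s\le t\}\times\UD$; hence, for fixed $z$ and $T$, the set $K:=\{\varphi_{s,t}(z):0\le s\le t\le T\}$ is a compact subset of~$\UD$. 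To obtain EF3 \emph{of the same order}~$d$, fix $0\le s\le u\le t\le T$ and set $b:=\varphi_{s,t}(z)$, $c:=\varphi_{s,u}(z)\in K$; since $f_u(c)=f_s(z)=f_t(b)$, LC3 gives $|f_u(c)-f_u(b)|=|f_t(b)-f_u(b)|\le\int_u^t k_{K,T}$. The point is a uniform lower Lipschitz estimate $|f_u(c)-f_u(b)|\ge m\,|c-b|$ for all $b,c\in K$ and $u\in[0,T]$, which follows from a compactness argument using that each $f_u$ is univalent; dividing yields $|\varphi_{s,u}(z)-\varphi_{s,t}(z)|\le m^{-1}\int_u^t k_{K,T}$, so EF3 holds with $k_{z,T}:=m^{-1}k_{K,T}\in L^d$.

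For statement~(2), let $(f_t)$ solve~\eqref{low-kuf} with each $f_t$ univalent. Fix $s,z$ and set $h(t):=f_t(\varphi_{s,t}(z))$ for $t\ge s$. First I would check, using EF3,~\eqref{low-kuf} and H3, that $h$ is locally absolutely continuous; then $h'(t)=0$ for a.e.\ $t$, because the two terms in the chain rule, namely $(\partial_tf_t)(\varphi_{s,t}(z))=-G(\varphi_{s,t}(z),t)f_t'(\varphi_{s,t}(z))$ from~\eqref{low-kuf} and $f_t'(\varphi_{s,t}(z))\,\partial_t\varphi_{s,t}(z)=f_t'(\varphi_{s,t}(z))G(\varphi_{s,t}(z),t)$ from~\eqref{main-eq}, cancel. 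Hence $h\equiv h(s)=f_s(z)$, which is precisely~\eqref{EQ_associated}, and it gives LC2 since $f_s(\UD)=f_t(\varphi_{s,t}(\UD))\subseteq f_t(\UD)$. For LC3: by~\eqref{EQ_associated}, $f_s(z)=f_T(\varphi_{s,T}(z))$, and $\{\varphi_{s,T}(z):z\in K,\ s\in[0,T]\}$ is a compact subset of~$\UD$, so $f_s|_K$ is uniformly bounded for $s\in[0,T]$ and hence $|f_s'|\le M_{K,T}$ on $K$ by Cauchy's estimates; then $|f_u(z)-f_t(z)|=\bigl|\int_u^tG(z,\xi)f_\xi'(z)\,d\xi\bigr|\le M_{K,T}\int_u^t k_{K,T}$, which is LC3 of order~$d$.

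The substantive step is ``evolution family $\Rightarrow$ associated Loewner chain''. I would fix $t_n\to\infty$, form the class-$\clS$ renormalizations $L_n^{s}:=(\varphi_{s,t_n}-\varphi_{s,t_n}(0))/\varphi_{s,t_n}'(0)$, and use the normality of~$\clS$ together with a diagonal extraction over a countable dense set of~$s$ to get $L_n^{s}\to f_s$ locally uniformly with $f_s\in\clS$ (so $f_s$ univalent, by Hurwitz). Writing $\varphi_{s,t_n}=\varphi_{t,t_n}\circ\varphi_{s,t}$ and using the affine relation between two one-point renormalizations of a univalent map, one gets $L_n^{t}\circ\varphi_{s,t}=a_{s,t,n}L_n^{s}+b_{s,t,n}$; since $a_{s,t,n},b_{s,t,n}$ are recovered from the values of $L_n^{s},L_n^{t}$ at two fixed points, they converge with no further extraction, and in the limit $f_t\circ\varphi_{s,t}=a_{s,t}f_s+b_{s,t}$, with $(a_{s,t},b_{s,t})$ satisfying a cocycle relation forced by EF2 and the univalence of $f_s$. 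Replacing $f_t$ by $a_{0,t}^{-1}(f_t-b_{0,t})$ (still univalent) trivializes the cocycle, so $f_t\circ\varphi_{s,t}=f_s$ on the dense set, and a continuity argument using EF3 and the joint continuity of the evolution family extends this to all $0\le s\le t$; then LC1--LC2 and~\eqref{EQ_associated} are clear, and LC3 follows by reversing the estimate from~(2): $|f_s(z)-f_t(z)|=|f_t(\varphi_{s,t}(z))-f_t(z)|\le\sup_{K'}|f_t'|\cdot|\varphi_{s,t}(z)-z|$, with $|\varphi_{s,t}(z)-z|\le\int_s^t k_{K',T}$ by EF3 and~\eqref{main-eq}. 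I expect the genuine obstacle to be exactly this last part: constructing the maps $f_s$ coherently in~$s$ and removing the affine ambiguity introduced by the renormalization. Once an associated Loewner chain is in hand, matching the order~$d$ is routine $L^d$ bookkeeping.
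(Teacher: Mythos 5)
The paper does not actually prove Theorem~\ref{pav}: it imports it from \cite{RMIA} (Theorems~1.3 and~4.1) and \cite{SMP_decreasing}, so your argument can only be compared with the proofs given there. Your treatment of part~(2) and of the direction ``Loewner chain $\Rightarrow$ evolution family'' in part~(1) coincides with the standard one. For the converse direction of~(1) you take a genuinely softer route than \cite{RMIA}: there the chain is obtained as an \emph{actual} limit, as $t\to+\infty$, of $\varphi_{s,t}$ renormalized by a M\"obius map and a dilation determined by $\varphi_{0,t}(0)$ and $\varphi_{0,t}'(0)$, the existence of the limit being established by a distortion argument that splits into cases according to the behaviour of $|\varphi'_{0,t}(0)|/(1-|\varphi_{0,t}(0)|^2)$; this extra work produces a canonical chain and underlies the uniqueness statements of \cite{RMIA}. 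Your normal-families extraction of $L^s_n=(\varphi_{s,t_n}-\varphi_{s,t_n}(0))/\varphi'_{s,t_n}(0)\in\clS$ over a countable dense set $Q$ of values of~$s$, followed by the affine cocycle relation and its trivialization, suffices for bare existence, which is all the theorem asserts; note that your final ``continuity argument'' is not even needed, since $f_s:=\tilde f_u\circ\varphi_{s,u}$ is independent of the choice of $u\in Q$ with $u\ge s$ by EF2, which defines the chain for all $s$ at once. Two points deserve more care than you give them. First, the ``uniform lower Lipschitz estimate'' $|f_u(c)-f_u(b)|\ge m\,|c-b|$ does not follow from a pointwise lower bound on $|f_u'|$ alone; one needs a two-point distortion inequality for univalent maps (apply the Koebe $1/4$-theorem to the Koebe transform of $f_u$ at~$b$), combined with the compactness of $\{f_u:u\in[0,T]\}$ in $\Hol(\UD,\Complex)$ guaranteed by LC3 to get $\inf_{u\in[0,T]}\inf_K|f_u'|>0$. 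Second, in part~(2) the cancellation in $\tfrac{d}{dt}f_t(\varphi_{s,t}(z))$ uses \eqref{low-kuf} at the \emph{moving} point $\varphi_{s,t}(z)$, so the exceptional null set of $t$'s must be chosen uniformly on compacta in~$z$ (a Fubini/normality argument); this is standard but is exactly the content of the verification you defer.
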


\section{Spectral functions and regular contact  points of evolution families}\label{CONTACT-CH}

\begin{definition}
Let $(\v_{s,t})$ be an evolution family of order $d\in[1,+\infty]$. A point $\sigma\in\UC$ is called a {\sl regular contact point} of $(\varphi_{s,t})$ if  $\sigma$ is a regular contact point of the function $\varphi_{0,t}$ for any $t\ge0$.
The {\sl spectral function} of $(\varphi_{s,t})$ at a regular contact point $\sigma\in\UC$ is $\Lambda:[0,+\infty)\to\Real$ defined by $\Lambda(t):=-\log|\varphi_{0,t}'(\sigma)|$.
\end{definition}
\begin{remark}
From \cite[Lemma~2]{CDP2} it follows that if $\sigma\in\UC$ is a regular contact point of an evolution family
$(\varphi_{s,t})$, then $\sigma(s):=\varphi_{0,s}(\sigma)$ is a regular contact point of the function $\varphi_{s,t}$ for all
$t\geq s\geq 0$.
\end{remark}

\begin{lemma}\label{LM_bounded_var_contact}
Let $(\varphi_{s,t})$ be an  evolution family of order $d\in [1,+\infty]$ in~$\UD$ and let $\sigma\in\UC$ be a regular contact point for $(\v_{s,t})$. Then the spectral function
$\Lambda$ of $(\varphi_{s,t})$ at~$\sigma$ has the following properties:
\begin{mylist}
\item[(i)] $\Lambda(0)=0$;
\item[(ii)] $\big|\varphi'_{s,t}\big(\varphi_{0,s}(\sigma)\big)\big|=e^{\Lambda(s)-\Lambda(t)}$ for any
    $t\geq s\geq 0$;
\item[(iii)] the function $\Lambda$ has locally bounded variation.
\end{mylist}
\end{lemma}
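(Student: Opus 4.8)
The plan is the following. Parts~(i) and~(ii) are immediate. For~(i), $\Lambda(0)=-\log|\varphi_{0,0}'(\sigma)|=-\log|\id_\D'(\sigma)|=0$ by~EF1. For~(ii), write $\varphi_{0,t}=\varphi_{s,t}\circ\varphi_{0,s}$ (EF2); by the Remark preceding the lemma, $\sigma(s):=\varphi_{0,s}(\sigma)$ is a regular contact point of $\varphi_{s,t}$ with $\varphi_{s,t}(\sigma(s))=\varphi_{0,t}(\sigma)$, so the chain rule for angular derivatives gives $\varphi_{0,t}'(\sigma)=\varphi_{s,t}'(\sigma(s))\,\varphi_{0,s}'(\sigma)$. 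Passing to moduli and using $|\varphi_{0,u}'(\sigma)|=e^{-\Lambda(u)}$ together with $|\varphi_{s,t}'(\sigma(s))|=\alpha_{\varphi_{s,t}}(\sigma(s))$ (Theorem~\ref{LM_Julia}(vi)) yields $|\varphi_{s,t}'(\sigma(s))|=e^{\Lambda(s)-\Lambda(t)}$.

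For~(iii) the core idea is to split off a monotone part from~$\Lambda$. Fix $T>0$. By~EF3 (with $z=0$, $s=0$), $t\mapsto w(t):=\varphi_{0,t}(0)$ is locally absolutely continuous, so $w([0,T])$ is contained in a compact disc $\overline{\D_{\rho_0}}\subset\D$. Applying Julia's lemma (Theorem~\ref{LM_Julia}(iii),(vi)) to $\varphi_{s,t}$ at the regular contact point $\sigma(s)$ with test point $z=w(s)$, using $\varphi_{s,t}(w(s))=w(t)$ and the value of $\alpha_{\varphi_{s,t}}(\sigma(s))$ from~(ii), gives for all $0\le s\le t$
\[
\frac{|\sigma(t)-w(t)|^{2}}{1-|w(t)|^{2}}\;\le\;e^{\Lambda(s)-\Lambda(t)}\,\frac{|\sigma(s)-w(s)|^{2}}{1-|w(s)|^{2}}.
\]
Setting $\beta(t):=|\sigma(t)-w(t)|^{2}\big/(1-|w(t)|^{2})$, this says $\Psi:=\Lambda+\log\beta$ is non-increasing on $[0,+\infty)$. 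On $[0,T]$ the quantity $\beta$ lies between two positive constants (since $\sigma(t)\in\UC$ and $w(t)\in\overline{\D_{\rho_0}}$), and $\Psi(T)\le\Psi(t)\le\Psi(0)$; hence $\Lambda=\Psi-\log\beta$ is bounded on $[0,T]$. Feeding this boundedness back into the analogous estimate for $\varphi_{0,t}$ with test point $r\sigma$ and letting $r\to1^-$ shows moreover that $t\mapsto\sigma(t)$ is continuous on $[0,T]$.

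It now remains to pass from ``$\Psi$ monotone'' to ``$\Lambda$ of bounded variation''. Since $w(\cdot)$ is locally absolutely continuous with values in $\overline{\D_{\rho_0}}$, the function $1-|w(t)|^{2}$ is of locally bounded variation and bounded away from~$0$; hence $\log\beta$, and therefore $\Lambda=\Psi-\log\beta$, will be of locally bounded variation \emph{provided} $t\mapsto\sigma(t)=\varphi_{0,t}(\sigma)$ is of locally bounded variation. This last statement is the main obstacle. I would establish it from the Berkson--Porta representation $G(z,t)=(z-\tau(t))(\overline{\tau(t)}z-1)p(z,t)$ of the Herglotz vector field (with $\Re p(\cdot,t)\ge0$): one checks that for a.e.\ $t$ the point $\sigma(t)$ is a regular boundary point of $G(\cdot,t)$, that $t\mapsto\sigma(t)$ is in fact locally absolutely continuous with $\dot\sigma(t)=G(\sigma(t),t)$ (a purely tangential vector) for a.e.\ $t$, and that $|G(\sigma(t),t)|$ is dominated by a locally integrable function of~$t$ --- the last point via a boundary growth estimate for $G(\cdot,t)$ near $\sigma(t)$ in the spirit of Lemma~\ref{LM_G-est}, together with condition~H3. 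Converting the $L^{d}$-regularity of the Herglotz field into an integrable bound for the boundary ``speed'' of the moving contact point is the delicate step; everything else is routine.
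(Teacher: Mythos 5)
Parts~(i) and~(ii) are correct and match the paper's argument. For~(iii), however, there is a genuine gap. Your decomposition $\Lambda=\Psi-\log\beta$ with $\Psi:=\Lambda+\log\beta$ non-increasing is correct (it is a legitimate reading of Julia's inequality at the test point $z=w(s)$), and it does reduce~(iii) to proving that $t\mapsto\sigma(t)=\varphi_{0,t}(\sigma)$ is of locally bounded variation. But you do not prove that; you only outline a program --- identify $\sigma(t)$ as a regular null point of $G(\cdot,t)$, show $t\mapsto\sigma(t)$ is locally absolutely continuous with $\dot\sigma(t)=G(\sigma(t),t)$, and dominate $|G(\sigma(t),t)|$ by an integrable function --- and you explicitly flag the last step as ``delicate'' without carrying it out. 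This is precisely the content of Theorem~\ref{TH_contact}(i)--(iii), and in this paper the proof of that theorem (Steps~1--3) \emph{relies on Lemma~\ref{LM_bounded_var_contact}}: Step~1 invokes the present lemma to get that the spectral function of the conjugated family $(\psi_{s,t})$ is of bounded variation, and Step~2 uses it again. So the route you sketch is circular unless you supply an independent proof of the regularity of $t\mapsto\sigma(t)$, which you have not done.

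The paper sidesteps all of this by choosing the test point $z=0$ rather than $z=w(s)$ in Julia's inequality for $\varphi_{s,t}$ at the contact point $\sigma(s)$. With $z=0$ one obtains directly
\[
e^{\Lambda(t)-\Lambda(s)}\;=\;\frac{1}{|\varphi_{s,t}'(\sigma(s))|}\;\le\;\frac{1-|\varphi_{s,t}(0)|^2}{|\sigma(t)-\varphi_{s,t}(0)|^2}\;\le\;\frac{1+|\varphi_{s,t}(0)|}{1-|\varphi_{s,t}(0)|},
\]
so the positive part of each increment of $\Lambda$ is controlled by $|\varphi_{s,t}(0)|$, which by EF3 (with $z=0$) is dominated by $\int_s^t k_{0,T}$. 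Summing over a fine partition bounds the positive variation of~$\Lambda$ on $[0,T]$, and since $\Lambda(T)$ is finite this yields locally bounded variation with no information at all about $t\mapsto\sigma(t)$. That is the key simplification you missed: your choice $z=w(s)$ produces a cleaner monotone part $\Psi$ but reintroduces the unknown motion of $\sigma(t)$, whereas $z=0$ eliminates $\sigma(t)$ from the estimate entirely. To repair your proof, replace $z=w(s)$ by $z=0$ and argue with positive increments directly; the sophisticated machinery you invoke for the dynamics of $\sigma(t)$ is not needed here (and, as noted, cannot be used here without circularity).
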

\begin{proof}
Statement~(i) is just by construction, while~(ii) follows from the chain rule for angular derivatives, see,
{\it e.g.}, \cite[Lemma~2]{CDP2}.

Hence we only need to prove~(iii). By (ii) and Theorem~\ref{LM_Julia} applied to $\v=\v_{s,t}$ and $z=0$ with $\varphi_{0,s}(\sigma)$ substituted for~$\sigma$, for any $t\ge s\ge0$ we have
\begin{equation*}
\begin{split}
[\Lambda(t)-\Lambda(s)]^+=\log^+\frac{1}{|\varphi'_{s,t}(\v_{0,s}(\sigma))|}
\leq \log^+\frac{1-|\v_{s,t}(0)|^2}{|\v_{0,t}(\sigma)-\v_{s,t}(0)|^2}
\leq \log\frac{1+|\v_{s,t}(0)|}{1-|\v_{s,t}(0)|}.
\end{split}
\end{equation*}
Combined with condition EF3 from Definition~\ref{DE}, this inequality implies that the total variation of~$\Lambda$ on~$[0,T]$ is finite for any~$T>0$.
\end{proof}

\begin{remark}\label{RM_DW}
In~\cite[Sections\,7,\,8]{BCM1} it has
been proved that in case $\tau\in \UC$ is the DW-point of $\v_{s,t}$ whenever $t\geq s\geq0$ and $\v_{s,t}\neq\id_\UD$, then the spectral function  $\Lambda$ at $\tau$ is absolutely continuous and
\begin{equation*}\label{EQ_Lambda-formula}
\Lambda(t)=-\int_{0}^tG'(\tau,s)\,ds.
\end{equation*}
\end{remark}

Here we study the much more general case of a regular contact point:
\begin{theorem}\label{TH_contact}
Let $(\varphi_{s,t})$ be an evolution family and $G$ its Herglotz vector field.
Suppose $(\varphi_{s,t})$ has a regular contact point~$\sigma\in\UC$ with  spectral function~$\Lambda$. Set $\sigma(t):=\v_{0,t}(\sigma)$. Then the following statements hold:
\begin{mylist}
\item[(i)] for a.e. $t\ge0$ the angular limit $$G(\sigma(t),t):=\angle\lim_{z\to \sigma(t)} G(z,t)$$  exists and $v(t):=-i\,\overline{\sigma(t)}\,G(\sigma(t),t)\in\Real$;
\item[(ii)] the function $[0,+\infty)\ni t\mapsto G(\sigma(t),t)$ is
    of class $L^1_{\rm loc}$;
\item[(iii)] the function $[0,+\infty)\ni t\mapsto \sigma(t)$ is locally absolutely continuous and
    for any $s,t\ge0$, $$\sigma(t)=\sigma(s)+\int_s^t G(\sigma(\xi),\xi)\,d\xi;$$
\item[(iv)] for a.e. $t\ge0$ the angular limit
    $$G'(\sigma(t),t):=\angle\lim_{z\to \sigma(t)}\frac{G(z,t)-G(\sigma(t),t)}{z-\sigma(t)}$$ exists and $\Im G'(\sigma(t),t)=v(t)$;

\item[(v)] the function $[0,+\infty)\ni t\mapsto G'(\sigma(t),t)$ is of class $L^1_{\rm loc}$ and for any $t\ge0$,
$$
\Lambda(t)=\int_0^t\Re G'(\sigma(\xi),\xi)\,d\xi.
$$
\end{mylist}
\end{theorem}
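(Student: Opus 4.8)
\medskip
\noindent\textit{Proof strategy.} Everything rests on the Loewner ODE~\eqref{main-eq}, $\partial_t\varphi_{s,t}(z)=G(\varphi_{s,t}(z),t)$, and on the Julia--Wolff--Carath\'eodory inequality (Theorem~\ref{LM_Julia}) for $\varphi_{s,t}$ at the regular contact point~$\sigma(s)$. Recall --- from the remark after the definition of a regular contact point of an evolution family (via \cite[Lemma~2]{CDP2}) and from Lemma~\ref{LM_bounded_var_contact} --- that $\sigma(s)$ is a regular contact point of $\varphi_{s,t}$ with $\varphi_{s,t}(\sigma(s))=\sigma(t)$ and $|\varphi'_{s,t}(\sigma(s))|=e^{\Lambda(s)-\Lambda(t)}$, whence $\varphi'_{s,t}(\sigma(s))=e^{\Lambda(s)-\Lambda(t)}\,\overline{\sigma(s)}\,\sigma(t)$ by Theorem~\ref{LM_Julia}(vi). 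The plan is: (1)~prove the a.e.\ statements~(i) and~(iv) by an infinitesimal analysis of $\varphi_{t,t+h}$ near~$\sigma(t)$; (2)~deduce~(iii), and with it~(ii), by passing to the boundary $r\to1^-$ in~\eqref{main-eq} at $z=r\sigma$; (3)~obtain~(v) by the same passage applied to the $z$-derivative of~\eqref{main-eq}.

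\medskip
\noindent\textit{Stage 1.} Fix $T>0$ and let $E\subset[0,T]$ be the full-measure set of $t$ at which $G(\cdot,t)$ is an infinitesimal generator, $\Lambda$ is differentiable, and $h^{-1}(\varphi_{t,t+h}(z)-z)\to G(z,t)$ locally uniformly on~$\UD$ as $h\to0^+$ (the last holding a.e.\ by a routine argument from EF3 and $\varphi_{t,t+h}(z)-z=\int_t^{t+h}G(\varphi_{t,\xi}(z),\xi)\,d\xi$). Fix $t\in E$ and put $\psi_h:=\varphi_{t,t+h}$, $\alpha_h:=e^{\Lambda(t)-\Lambda(t+h)}$ (so $\alpha_h\to1$). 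Theorem~\ref{LM_Julia}(iii) for $\psi_h$ at~$\sigma(t)$ gives
$$\frac{|\sigma(t+h)-\psi_h(z)|^{2}}{1-|\psi_h(z)|^{2}}\;\le\;\alpha_h\,\frac{|\sigma(t)-z|^{2}}{1-|z|^{2}}\qquad(z\in\UD),$$
so, uniformly in small $h\ge0$ and $z$ in a fixed Stolz angle at~$\sigma(t)$, $|\psi_h(z)-\sigma(t+h)|\le C\sqrt{1-|z|}$; together with $\psi_h(\sigma(t))=\sigma(t+h)$ and $\psi_h'(\sigma(t))=\alpha_h\overline{\sigma(t)}\sigma(t+h)$ this gives the first-order expansion $\psi_h(z)=\sigma(t+h)+\psi_h'(\sigma(t))(z-\sigma(t))+o(z-\sigma(t))$, non-tangentially and with remainder uniform in small~$h$. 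Substituting the integral form of $\psi_h(z)-z$ and letting $h\to0^+$ while simultaneously $z=z_h\to\sigma(t)$ radially at a suitable rate (so that $h^{-1}(\sigma(t)-z_h)\to0$, $h^{-1}o(z_h-\sigma(t))\to0$, and $G(\cdot,\xi)$ along $\xi\mapsto\varphi_{t,\xi}(z_h)$ is controlled by the horodisk inclusion for $\varphi_{t,\xi}$ at $\sigma(t)$) yields: $h^{-1}(\sigma(t+h)-\sigma(t))\to\angle\lim_{z\to\sigma(t)}G(z,t)=:G(\sigma(t),t)$, which is~(i); and $h^{-1}(\psi_h'(\sigma(t))-1)\to\angle\lim_{z\to\sigma(t)}(G(z,t)-G(\sigma(t),t))/(z-\sigma(t))=:G'(\sigma(t),t)$, which is the existence part of~(iv). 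Since $|\sigma(t+h)|\equiv1$, expanding $|\sigma(t+h)|^{2}$ forces $\Re(\overline{\sigma(t)}G(\sigma(t),t))=0$, i.e.\ $v(t)\in\Real$; inserting $\sigma(t+h)=\sigma(t)+hG(\sigma(t),t)+o(h)$ and $\alpha_h=1-h\Lambda'(t)+o(h)$ into $\psi_h'(\sigma(t))=\alpha_h\overline{\sigma(t)}\sigma(t+h)$ gives $G'(\sigma(t),t)=-\Lambda'(t)+i\,v(t)$, so $\Im G'(\sigma(t),t)=v(t)$, completing~(iv) (and incidentally $\Re G'(\sigma(t),t)=-\Lambda'(t)$ a.e.).

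\medskip
\noindent\textit{Stage 2.} Put $z=r\sigma$ in $\varphi_{0,t}(z)-\varphi_{0,s}(z)=\int_s^t G(\varphi_{0,\xi}(z),\xi)\,d\xi$ and let $r\to1^-$. The left side tends to $\sigma(t)-\sigma(s)$; conformality of $\varphi_{0,\xi}$ at~$\sigma$ gives $\varphi_{0,\xi}(r\sigma)=\sigma(\xi)-e^{-\Lambda(\xi)}(1-r)\sigma(\xi)+o(1-r)$, an essentially radial approach, so by~(i) the integrand tends to $G(\sigma(\xi),\xi)$ for a.e.~$\xi$, with a uniform-in-$r$ majorant furnished by $|G(\varphi_{0,\xi}(r\sigma),\xi)|=|\partial_\xi\varphi_{0,\xi}(r\sigma)|$ and a uniform bound on the variation of $\xi\mapsto\varphi_{0,\xi}(r\sigma)$ (from the horodisk inequality for $\varphi_{0,\xi}$ at~$\sigma$ and the boundedness of~$\Lambda$ on~$[0,T]$). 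Dominated convergence gives~(iii), $\sigma(t)-\sigma(s)=\int_s^tG(\sigma(\xi),\xi)\,d\xi$, which also shows $\xi\mapsto G(\sigma(\xi),\xi)\in L^1_{\rm loc}$, hence~(ii). For~(v): differentiating~\eqref{main-eq} (with $s=0$) in $z$ gives $\partial_t\log\varphi_{0,t}'(z)=(\partial_w G)(\varphi_{0,t}(z),t)$; integrating, putting $z=r\sigma$ and letting $r\to1^-$ as above --- using on the right $\angle\lim_{w\to\sigma(\xi)}(\partial_w G)(w,\xi)=G'(\sigma(\xi),\xi)$ (the analogue of Theorem~\ref{LM_Julia}(vi) for $G(\cdot,\xi)$, justified by~(iv)) with a Cauchy-estimate majorant, and on the left $\Re\log\varphi_{0,\xi}'(\sigma)=-\Lambda(\xi)$ --- one obtains, after taking real parts and using $\Lambda(0)=0$, that $\Lambda$ is locally absolutely continuous with $\Lambda'(t)=-\Re G'(\sigma(t),t)$ for a.e.~$t$, hence $\Lambda(t)=-\int_0^t\Re G'(\sigma(\xi),\xi)\,d\xi$; in particular $\xi\mapsto G'(\sigma(\xi),\xi)\in L^1_{\rm loc}$. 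This is~(v).

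\medskip
\noindent\textit{Main obstacle.} The technical heart is Stage~1: the interchange of the limits $h\to0^+$ and $z\to\sigma(t)$, i.e.\ the uniformity in small~$h$ of the remainder in the first-order expansion of $\psi_h=\varphi_{t,t+h}$ at $\sigma(t)$. This requires $h$-free, quantitative forms of the estimates behind Theorem~\ref{LM_Julia} for the family $(\psi_h)$ --- available because $\alpha_{\psi_h}(\sigma(t))=\alpha_h\to1$ and $\sigma(t+h)\to\sigma(t)$ --- interlocked with control of $G(\cdot,\xi)$ near the boundary along the curves used in the passage to the limit (and, in Stage~2, with the uniform-in-$r$ majorants); the horodisk inclusions for the $\varphi$'s at~$\sigma$, the boundedness of~$\Lambda$ on compact intervals, and generator growth bounds of the type in Lemma~\ref{LM_G-est} are the natural ingredients. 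Once these estimates are secured, the remaining bookkeeping is straightforward.
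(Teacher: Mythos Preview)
Your Stage~1 has a genuine gap that is not merely technical bookkeeping. You attempt to extract the existence of the angular limits $G(\sigma(t),t)$ and $G'(\sigma(t),t)$ from a simultaneous limit $h\to0^+$, $z\to\sigma(t)$ in the expansion of $\psi_h=\varphi_{t,t+h}$. Two things go wrong. First, the claim that the remainder in $\psi_h(z)=\sigma(t+h)+\psi_h'(\sigma(t))(z-\sigma(t))+o(z-\sigma(t))$ is \emph{uniform in small $h$} is unjustified: Julia's inequality controls only $|\psi_h(z)-\sigma(t+h)|$, not the second-order discrepancy $\psi_h(z)-\sigma(t+h)-\psi_h'(\sigma(t))(z-\sigma(t))$, and nothing in the Julia--Wolff--Carath\'eodory circle of ideas bounds that term uniformly in $h$ without extra regularity you do not have. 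Second, even if your diagonal sequence converged, this would not give the \emph{angular} limit of $G(\cdot,t)$ at~$\sigma(t)$, which must hold along every non-tangential approach. You also implicitly need $\sigma'(t)$ to exist a.e., but you never establish that $t\mapsto\sigma(t)$ has locally bounded variation before Stage~1.

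The paper's argument is structurally different and avoids the double limit entirely. It first proves (Step~1) that $t\mapsto\sigma(t)$ has locally bounded variation, by conjugating so that the DW-point is at~$0$ and bounding $|b(t)-b(s)|$ via Julia applied to an auxiliary function. Then (Step~2), at a.e.\ fixed $t_0$ it passes to the rotated family $\tilde\varphi_{s,t}(z)=\overline{\sigma(t)}\varphi_{s,t}(\sigma(s)z)$ with BRFP at~$1$, and uses the Reich--Shoikhet \emph{product formula} $\phi_t^{t_0}=\lim_n(\tilde\varphi_{t_0,t_0+t/n})^{\circ n}$ to build the one-parameter semigroup generated by $\tilde G(\cdot,t_0)$. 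Since each iterate has a BRFP at~$1$ with known dilation, Julia's inequality passes to the limit and shows $(\phi_t^{t_0})$ has a BRFP at~$1$; Theorem~\ref{TH_G-phi} then gives that $\tilde G(\cdot,t_0)$ has a BRNP there, which is exactly~(i) and~(iv). This is the key missing idea in your proposal. Note also that in your Stage~2 the proposed majorants are not workable as stated: $|\partial_\xi\varphi_{0,\xi}(r\sigma)|$ is not an $L^1$ bound uniform in~$r$, and a ``Cauchy-estimate majorant'' for $G'$ near~$\UC$ blows up. The paper instead uses Lemma~\ref{LM_G-est} together with Julia's inequality for $\varphi_{0,s}$ to obtain an explicit $L^1_{\rm loc}$ majorant in terms of $|G(0,\cdot)|$ and $|G'(\sigma(\cdot),\cdot)|$, the latter being locally integrable by the sandwich $-2\Re\big(\overline{\sigma(t)}G(0,t)\big)\le\tilde G'(1,t)\le-\Lambda'(t)$ obtained in Step~2.
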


\begin{remark}
Note that by Theorem~\ref{LM_Julia}(vi), $\varphi_{s,t}'(\sigma(s))=\sigma(t)\overline{\sigma(s)}|\varphi_{s,t}'(\sigma(s))|$ for all~$s\ge 0$ and all $t\ge s$. Therefore, statements~(iii)\,--\,(v) of the above theorem implies that
$$
\varphi_{s,t}'(\sigma(s))=\exp\int_s^tG'(\sigma(\xi),\xi)\,d\xi,\qquad\text{for all $t\ge s\ge0$}.
$$
\end{remark}

\begin{proof}[Proof of Theorem~\ref{TH_contact}] We divide the proof in several steps.

\Step1{\it The function $[0,+\infty)\mapsto \sigma(t)$ is of locally bounded variation.} For $t\ge0$ we denote
$$h_t(z):=\frac{z+\zeta(t)}{1+\overline{\zeta(t)} z},$$ where $\zeta(t):=\varphi_{0,t}(0)$.
Note that $h_t$ is an automorphism of $\D$ and $h_t(0)=\zeta(t)$ for all $t\ge0$. For $t\geq s\geq0$ let
$\psi_{s,t}(z):=h_t^{-1}\circ\varphi_{s,t}\circ h_s$. Clearly $\psi_{s,t}\in\Hol(\UD,\UD)$,  $\psi_{s,t}(0)=0$, and
$\psi_{s,t}(b(s))=b(t)$, where $b(t):=h_t^{-1}(\sigma(t))$ for all $t\ge0$. Since the function $t\mapsto\zeta(t)\in\UD$ is locally
absolutely continuous, it is sufficient to prove that $t\mapsto b(t)$ is of locally bounded variation on~$[0,+\infty)$. Indeed,
for any $t\geq s\geq 0$,
\begin{align*}
|\sigma(t)-\sigma(s)|&=|h_t(b(t))-h_s(b(s))|\le |h_t(b(t))-h_t(b(s))|+|h_t(b(s))-h_s(b(s))|\\&\le
\frac{|b(t)-b(s)|}{1-|\zeta(t)|^2}+2\,\frac{|\zeta(t)-\zeta(s)|+|\Im(\zeta(t)\overline{\zeta(s)})|}
{(1-|\zeta(t)|)(1-|\zeta(s)|)}\\&\le \frac{|b(t)-b(s)|}{1-|\zeta(t)|^2}+\frac{4|\zeta(t)-\zeta(s)|}
{(1-|\zeta(t)|)(1-|\zeta(s)|)}.
\end{align*}

Fix now $s\ge0$ and $t\ge s$. Consider  $f\in\Hol(\UD,\UD)$ defined as
$f(z):=b(s)\overline{b(t)}\psi_{s,t}(z)/z$ for all~$z\neq0$ and $f(0):=b(s)\overline{b(t)}\psi_{s,t}'(0).$
By construction, $b(s)$ is a contact point for $f$, $f(b(s))=1$ and the boundary dilation coefficient of $f$ at $b(s)$ is $b(s)\overline{b(t)}\psi_{s,t}'(b(s))-1=|\psi_{s,t}'(b(s))|-1>0$ by Theorem \ref{LM_Julia}\,(vi) and Theorem \ref{Denjoy-Thm}.
Applying Theorem \ref{LM_Julia} to $f$ at $b(s)$ with $z=0$  we immediately get
$$
|1-b(s)\overline{b(t)}\psi_{s,t}'(0)|\le2\,\frac{|1-b(s)\overline{b(t)}\psi_{s,t}'(0)|^2}{1-|\psi_{s,t}'(0)|^2} \le 2 (|\psi_{s,t}'(b(s))|-1).
$$
 Therefore,
\begin{multline}\label{EQ_step1}
|b(t)-b(s)|=|b(t)/b(s)-1|\le|1-\psi_{s,t}'(0)|+|\psi_{s,t}'(0)-b(t)/b(s)|\\
\le|1-\psi_{s,t}'(0)|+2(|\psi_{s,t}'(b(s))|-1).
\end{multline}
Note that by \cite[Lemma~2.8]{RMIA}, $(\psi_{s,t})$ is an evolution family. Furthermore, by construction, the origin is the common
DW-point of~$(\psi_{s,t})$ and $b(0)=\sigma$ is a regular contact point of~$(\psi_{s,t})$. Denote by $\Lambda_0$
the spectral function of~$(\psi_{s,t})$ at~$\sigma$. Then \eqref{EQ_step1}
can be rewritten as
\begin{equation}\label{exp-divido}
    |b(t)-b(s)|\le \big|1-\psi'_{0,t}(0)/\psi'_{0,s}(0)\big|+2\big|1-e^{\Lambda_0(s)-\Lambda_0(t)}\big|\quad\text{for any $t\geq s\geq 0$}.
\end{equation}
Therefore, the statement of Step~1 follows from the fact that by \cite[Theorem~7.1]{BCM1}, ${[0,+\infty)\ni t\mapsto\psi_{0,t}'(0)\neq0}$ is locally absolutely continuous and by
Lemma~\ref{LM_bounded_var_contact}, $\Lambda_0$ is of locally bounded variation on~$[0,+\infty)$.

\Step2{Assertions (i),\,(ii) and (iv) hold.} By Lemma~\ref{LM_bounded_var_contact} and the previous step, $\Lambda$ and $t\mapsto\sigma(t)$ are of locally bounded variation on~$[0,+\infty)$.  Therefore there exists a
null-set $N\subset[0,+\infty)$ such that for any~$t\in[0,+\infty)\setminus N$, the derivatives $\Lambda'(t)$ and $\sigma'(t)$
exist finitely and moreover (see, {\it e.g.}, \cite[Theorem 3.6]{SMP_annulusI})  for any $s\ge0$ and $z\in\UD$ the map $[s,+\infty)\ni t\mapsto\varphi_{s,t}(z)\in\UD$ is differentiable
on~$[s,+\infty)\setminus N$, with $(\partial /\partial t)\varphi_{s,t}(z)=G\big(\varphi_{s,t}(z),t\big)$ for all such~$t$.
Consider the family $(\tilde\varphi_{s,t})$ defined by
$\tilde\varphi_{s,t}(z)=\overline{\sigma(t)}\varphi_{s,t}\big(\sigma(s)z\big)$ for~$t\geq s\geq 0$ and~$z\in\UD$. {\it A
priori} we cannot state that~$(\tilde \varphi_{s,t})$ is an evolution family. However, for all~$s\ge0$ and all $z\in\UD$, the
map $[s,+\infty)\ni t\mapsto\tilde\varphi_{s,t}(z)\in\Hol(\UD,\Complex)$ is differentiable on~$[s,+\infty)\setminus N$, with
$\frac{\partial}{\partial t}\tilde\varphi_{s,t}(z)=\tilde G\big(\tilde\varphi_{s,t}(z),t\big)$ for all~$t\in[s,+\infty)\setminus N$, where we set
\[
\tilde G(z,t):=\overline{\sigma(t)}\big(G(\sigma(t)z,t)-z\sigma'(t)\big).
\]
 Note also that~$\sigma_0=1$ is a BRFP for $\tilde
\varphi_{s,t}$, and by Theorem \ref{LM_Julia}, $\tilde\varphi_{s,t}'(1)=|\varphi_{s,t}'(\sigma(s))|$ for all~$t\geq s\geq 0$.

For any $t_0\in[0,+\infty)\setminus N$, the
semigroup $(\phi_t^{t_0})$ generated by $\tilde G(\cdot,t_0)$ is given by the product formula (see, {\it e.g.}~\cite[Theorem~3]{Shoikhet-trick})
\begin{equation}\label{EQ_ShoikhetTrick}
\phi_t^{t_0}=\lim_{n\to+\infty}\phi_{n,t}^{t_0},\quad\text{where}~ \phi_{n,t}^{t_0}:=\big(\tilde\varphi_{t_0,t_0+t/n}\big)^{\circ n},
\end{equation}
for all $t\ge0$. According to the chain rule for angular derivatives, see,
{\it e.g.}, \cite[Lemma~2]{CDP2} or \cite[Lemma~(1.3.25) in~\S1.3.4]{Abate}, $\phi_{n,t}^{t_0}$ has a BRFP at~$1$ for all $t\geq 0$, $n\in \N$ and
\[
(\phi_{n,t}^{t_0})'(1)=\exp n\big(\Lambda(t_0)-\Lambda(t_0+t/n)\big).
\]
Then $\big(\phi_{n,t}^{t_0}\big)'(1)\to e^{-\Lambda'(t_0)t}$ as $n\to+\infty$. Using Theorem~\ref{LM_Julia}\,(iii) for $\phi_{n,t}^{t_0}$ and passing to the limit as $n\to \infty$, it is not hard to see that $1$ is a BRFP for $\phi_t^{t_0}$ and $\big(\phi_t^{t_0}\big)'(1)\le e^{-\Lambda'(t_0)t}$. Then, by
Theorem~\ref{TH_G-phi}, $\tilde G(\cdot,t_0)$ has a BRNP at~$1$ with dilation  $\tilde G'(1,t_0)\le
-\Lambda'(t_0)$ for all~$t_0\in[0,+\infty)\setminus N$.

On the other hand from \eqref{EQ_BCM-repr} with $\sigma:=1, G:=\tilde G(\cdot, t_0), \lambda:=\tilde G'(1,t_0)$ applied for $z=0$, one easily obtains that  $\tilde G'(1,t_0) \ge -2\mRe \tilde G(0,t_0)$ for all~$t_0\in[0,+\infty)\setminus N$.

Thus we conclude that for any $t\in[0,+\infty)\setminus N$ the infinitesimal generator
$\tilde G(\cdot,t)$ has a BRNP at~$\sigma_0=1$ and
\begin{equation}\label{EQ_Gprime}
-2\Re\overline{\sigma(t)}G(0,t)\le\tilde G'(1,t)\le -\Lambda'(t).
\end{equation}
Recall also that $\sigma(t)\in \UC$ for all $t\in[0,+\infty)\setminus N$. In particular, it follows that assertions~(i) and~(iv) hold, with $G(\sigma(t),t)=\sigma'(t)$,
\begin{equation}\label{vGGG}
v(t)=-i{\sigma'(t)}/{\sigma(t)},\quad\text{and}~G'(\sigma(t),t)=\tilde G'(1,t)+iv(t)\qquad\text{for all
$t\in[0,+\infty)\setminus N$.}
\end{equation}
Finally, assertion~(ii) also holds because $t\mapsto \sigma'(t)$ is locally integrable on~$[0,+\infty)$.

\Step3{Assertion~(iii) holds.}  Recall that the angular derivative coincides, provided it is finite, with the radial limit of the derivative (see, {\it e.g.}, \cite[Prop. 4.7 on p. 79]{Pommerenke}). Therefore, by~\eqref{vGGG}, we have
$$\tilde G'(1,t)+iv(t)~=~G'(\sigma(t),t)~=\lim_{(0,1)\ni x\to1}G'(x\sigma(t),t)\qquad\text{for any $t\in[0,+\infty)\setminus N$.}$$
Since $t\mapsto \sigma(t)$ is continuous, from conditions H1 and H2 in Definition~\ref{DH} it follows that $t\mapsto G'(x\sigma(t),t)$ is measurable on~$[0,+\infty)$ for all~$x\in(0,1)$. Therefore, $t\mapsto G'(\sigma(t),t)$ is measurable too. Now, since $t\mapsto v(t)$ is
locally integrable, \eqref{EQ_Gprime}--\eqref{vGGG} implies that  $t\mapsto G'(\sigma(t),t)$ is also locally integrable
on~$[0,+\infty)$.

Fix now any $t>0$ and write
\begin{equation}\label{EQ_int-eq}
\varphi_{0,t}(x\sigma)=x\sigma+\int_{0}^t G(\varphi_{0,s}(x\sigma),s)\,ds.
\end{equation}
Since the spectral function $\Lambda$ has finite variation, it follows that there exists~$M_t>0$ such that
$|\varphi_{0,s}'(\sigma)|<M_t$ for al~$s\in[0,t]$.  By Theorem~\ref{LM_Julia} applied to $\varphi_{0,s}$ and $z=x\sigma$,
\begin{equation}\label{Hop-ll}
\frac{|\sigma(s)-\varphi_{0,s}(x\sigma)|^2}{1-|\varphi_{0,s}(x\sigma)|^2}\le M_t\frac{1-x}{1+x}\quad\text{for all~$x\in(0,1)$ and
all~$s\in[0,t]$}.
\end{equation}
According to Lebesgue's Dominated Convergence Theorem, combining assertion~(i), Lemma~\ref{LM_G-est} applied to $\tilde G (\cdot,t)$, $t\in[0,+\infty)\setminus N$, inequality~\eqref{Hop-ll}, and the fact that the functions $t\mapsto G'(\sigma(t),t)$, $t\mapsto G(0,t)$, and $t\mapsto
v(t)$ are locally integrable on~$[0,+\infty)$, we obtain~(iii) by passing to
the limit in~\eqref{EQ_int-eq} as $(0,1)\ni x\to 1$.

\Step4{Assertion~(v) holds.} According to the previous step of the proof, $t\mapsto \sigma(t)$ is locally absolutely continuous
on~$[0,+\infty)$. Therefore, by~\cite[Lemma~2.8]{RMIA}, $(\tilde \varphi_{s,t})$ is an evolution family. By construction,  $\tilde G$ is its Herglotz vector field, $\sigma_0=1$ is its RBFP, and $\Lambda$ is its spectral function at~$\sigma_0$. Therefore, bearing in mind~\eqref{vGGG}, in order to prove (v) we may assume that $\sigma(t)\equiv 1$.

From \eqref{EQ_int-eq}, fixing any $t\ge0$ we can write
\begin{equation}\label{EQ_toLimit}
e^{-\Lambda(t)}=\varphi_{0,t}'(1)=\lim_{\substack{x\to1\\x\in(0,1)}}\frac{1-\varphi_{0,t}(x)}{1-x}=1+\lim_{\substack{x\to1\\x\in(0,1)}}\int_0^t\frac{G(\varphi_{0,s}(x),s)}{x-1}\,ds.
\end{equation}

Arguing as at the end of Step 3, use \eqref{Hop-ll} and Lemma~\ref{LM_G-est} to
conclude that the integrand in~\eqref{EQ_toLimit} does not exceed in absolute value $C_t(|G(0,s)|+|G'(1,s)|/2)$ for
all~$s\in[0,t]\setminus N$, all $x\in(0,1)$, and some constant $C_t>0$ not depending on $s$ and~$x$. Recalling that $G(0,\cdot)$ and
$G'(1,\cdot)$ are locally integrable, we  can pass to the
limit  in~\eqref{EQ_toLimit} using Lebesgue's Dominated Convergence Theorem. Hence we obtain
\begin{equation}\label{EQ_Lambda-int-eq}
e^{-\Lambda(t)}=1+\int_0^tG'(1,s)\varphi'_{0,s}(1)\,ds\qquad\text{for all $t\ge0$.}
\end{equation}
 Differentiating~\eqref{EQ_Lambda-int-eq} w.r.t. $t$, we get
$-\Lambda'(t)e^{-\Lambda(t)}=G'(1,t)\varphi'_{0,t}(1)=G'(1,t)e^{-\Lambda(t)}$ for a.e.~$t\ge0$,
from which (v) follows easily. The proof is now complete.
\end{proof}

\section{The proof of Theorem \ref{main1}}\label{BRFP-CH}

In this section we are going to prove Theorem \ref{main1}. We will make use the following lemma, whose proof follows almost literally an argument in the proof of~\cite[Theorem~10.5, p.\,305\,--\,306]{Pommerenke}, so
we omit it.

\begin{lemma}\label{LM_derivative_exists}
Let $g:\UD\to\Complex$ be a univalent holomorphic function, $\sigma\in\UC$, and $\omega\in\Complex\setminus g(\UD)$. Then the
function~$\psi(z):=\big(g(z)-\omega\big)/(z-\sigma)$ is normal in~$\UD$. In particular, if $C$ is a slit in~$\UD$ landing
at~$\sigma$ and the limit~$L:=\lim_{C\ni z\to\sigma}\psi(z)$ exists, then the angular limit of~$\psi$ at~$\sigma$ also exists
and equals~$L$. If in addition $L\neq\infty$, then $\angle\lim_{z\to\sigma} g(z)=\omega$ and the angular derivative~$g'(\sigma)$
of~$g$ at~$\sigma$ exists and equals~$L$.
\end{lemma}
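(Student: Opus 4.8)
The plan is to reduce the statement to two ingredients: the \emph{normality} of~$\psi$ on~$\UD$, and the classical Lindel\"of-type theorem, according to which a normal function that admits an asymptotic value along a slit landing at a boundary point has that value as its angular limit there. Granting the normality of~$\psi$, the remaining assertions follow immediately. Indeed, by the Lindel\"of theorem for normal functions (see, e.g.,~\cite{Pommerenke}) the mere existence of $L=\lim_{C\ni z\to\sigma}\psi(z)$ forces $\anglim_{z\to\sigma}\psi(z)=L$. If moreover $L\ne\infty$, then, since $\psi(z)\to L$ finitely and $z-\sigma\to0$ as $z\to\sigma$ non-tangentially, we get $g(z)=\omega+(z-\sigma)\psi(z)\to\omega$; hence $g(\sigma)=\omega$, and straight from the definition of the angular derivative, $g'(\sigma)=\anglim_{z\to\sigma}\tfrac{g(z)-g(\sigma)}{z-\sigma}=\anglim_{z\to\sigma}\psi(z)=L$.

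So the real content is the normality of~$\psi$, that is, the bound $\sup_{z\in\UD}(1-|z|^2)\psi^\#(z)<\infty$ for its spherical derivative $\psi^\#:=|\psi'|/(1+|\psi|^2)$. Here the hypothesis $\omega\notin g(\UD)$ is decisive. Since $g$ is univalent, the Koebe one-quarter theorem gives $\tfrac14(1-|z|^2)|g'(z)|\le\dist\big(g(z),\C\setminus g(\UD)\big)\le|g(z)-\omega|$, whence $(1-|z|^2)|g'(z)|\le 4|g(z)-\omega|$ for all $z\in\UD$. I would then combine this with the trivial inequality $1-|z|^2\le 2|\sigma-z|$ and with the identities $(z-\sigma)^2\psi'(z)=g'(z)(z-\sigma)-(g(z)-\omega)$ and $1+|\psi(z)|^2=\big(|z-\sigma|^2+|g(z)-\omega|^2\big)/|z-\sigma|^2$; estimating the numerator and denominator of $(1-|z|^2)\psi^\#(z)$ separately and using $2|z-\sigma|\,|g(z)-\omega|\le|z-\sigma|^2+|g(z)-\omega|^2$, a short computation yields $(1-|z|^2)\psi^\#(z)\le 3$ for every $z\in\UD$, which is exactly normality.

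The only step I expect to demand any care is this spherical-derivative estimate, and even there the difficulty is more bookkeeping than substance: what matters is realizing that it is precisely the omitted value~$\omega$ which, through the Koebe theorem, yields the a priori bound $(1-|z|^2)|g'(z)|=O\big(|g(z)-\omega|\big)$ that powers the whole argument. One could instead observe that $1/(g-\omega)$ is univalent, hence normal, but one would still have to absorb the factor $z-\sigma$, so the direct estimate above seems the cleaner route. Everything else is standard: the Lindel\"of theorem for normal functions is classical, and the passage from $\anglim_{z\to\sigma}\psi(z)=L$ to the conclusions about $g(\sigma)$ and $g'(\sigma)$ is the elementary computation spelled out in the first paragraph.
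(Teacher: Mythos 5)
Your proof is correct, and it is in substance the same argument the paper refers to when it defers to Pommerenke's Theorem~10.5 in \emph{Univalent Functions}: prove that $\psi$ is normal, then invoke the Lehto--Virtanen/Lindel\"of theorem to upgrade the limit along the slit $C$ to an angular limit, and finally read off $g(\sigma)=\omega$ and $g'(\sigma)=L$ from the definition. Your spherical-derivative computation is right: from the Koebe estimate $(1-|z|^2)|g'(z)|\le 4\,\mathrm{dist}\bigl(g(z),\partial g(\UD)\bigr)\le 4|g(z)-\omega|$, the elementary bound $1-|z|^2\le 2|\sigma-z|$, and the identities for $\psi'$ and $1+|\psi|^2$ that you list, one gets
\[
(1-|z|^2)\psi^{\#}(z)=\frac{(1-|z|^2)\,\bigl|g'(z)(z-\sigma)-(g(z)-\omega)\bigr|}{|z-\sigma|^2+|g(z)-\omega|^2}\le 3
\]
after two applications of $2ab\le a^2+b^2$, which is exactly the uniform bound that normality requires. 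This matches the paper's intended route; the only cosmetic difference is that you make the numerical constant explicit.
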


We can now start proving Theorem \ref{main1}. Without loss of generality we assume that~${\sigma=1}$.
Note that if (A) holds, then (B) and \eqref{EQ_G-lambda} follow directly from Theorem \ref{TH_contact}.

Let us show that (B) implies (A). Let ${u(z):=-(1-|z|^2)/|1-z|^2}$ be the (negative) Poisson kernel and define
$$g_z(s,t):=u(\v_{s,t}(z))-e^{\Lambda_0(t)-\Lambda_0(s)}u(z),\quad\Lambda_0(t):=-\int_0^t
G'(1,\xi)d\xi,$$
for $z\in \UD$ and $t\geq s\geq0$.
By Theorem~\ref{LM_Julia} (see also \cite[Proposition~2.3]{BCD2}) the inequality
\begin{equation}\label{EQ_ineq-for-g}
g_z(s,t)\leq 0,\quad \forall z\in\UD~~\forall 0\leq s\leq t,
\end{equation}
is equivalent to  $\varphi_{s,t}$ having a BRFP at~$1$ with
$\varphi_{s,t}'(1)\le e^{\Lambda_0(s)-\Lambda_0(t)}$.

Since by (B) for a.e.~$t\ge0$ the infinitesimal generator $G(\cdot, t)$ has a BRNP of dilation $G'(1,t)$ at $\sigma=1$, it
follows from \cite[Theorem 0.4]{BCD2} that for all $z\in \D$ and a.e.~$t\ge0$,
\begin{equation}\label{EQ_brnp}
\Re\big(v(z)\,G(z,t)\big)+G'(1,t) u(z)\leq 0,\quad \text{where $v:=\partial u/\partial x-i\partial u/\partial y$.}
\end{equation}

Fix now $z\in\UD$ and $s\ge0$. Taking into account that $t\mapsto \varphi_{s,t}(z)$ solves the equation $(\partial/\partial
t)\varphi_{s,t}(z)=G\big(\v_{s,t}(z),t\big)$ on $[s,+\infty)$, we conclude that $t\mapsto g_z(s,t)$ is locally absolutely
continuous on~$[s,+\infty)$ and, with the notation $w(t):=\varphi_{s,t}(z)$,
\[
\frac{\de g_z(s,t)}{\de t}=\Re\Big(v\big(w(t)\big)G\big(w(t),t\big)\Big)+G'(1,t)
u\big(w(t)\big)-G'(1,t)g_z(s,t)\leq -G'(1,t)g_z(s,t)
\]
for a.e. $t\ge s$. Therefore, $h(t):=-(\de/\de t)g_z(s,t)-G'(1,t)g_z(s,t)\geq 0$ for a.e. $t\ge s$. Note that $t\mapsto
g_z(s,t)$ is the solution to the differential equation $(\de/\de t)g_z(s,t)+G'(1,t)g_z(s,t)+h(t)=0$ with the initial condition
$g_z(s,s)=0$. Thus one easily concludes that $g_z(s,t)\leq 0$ for all~$t\ge s$. This proves the implication
(B)~$\Longrightarrow$~(A).

\medskip

The proof of the equivalence between (A) and (C) is divided into several steps.

\Step1{(C.1) and (C.2) imply that for each $t\geq s\geq0$ the point $1$ is a contact point of~$\varphi_{s,t}$. Moreover, if for some $t\geq s\geq0$, $\varphi_{s,t}(1)\neq1$, then $\arg\big(f'_t(1)/f'_s(1)\big)=\pi$.}
Fix any $t\geq s\geq0$. Since by hypothesis $\lim_{r\to1}f_s(r)=f_{t_0}(1)\notin f_t(\D)$, by \cite[Theorem 1, \S II.3]{Goluzin}
the univalence of $f_t$ implies that the limit $\lim_{r\to 1}\varphi_{s,t}(r)=\lim_{r\to 1} \big(f_t^{-1}\circ f_s)(r)$ does exist
and belongs to~$\UC$. Thus $1$ is a contact point of~$\varphi_{s,t}$.

Let us now assume that $\varphi_{s,t}(1)\neq1$. Since $f_t$ is conformal at~$1$, it is also isogonal at this point (here we
follow the terminology from~\cite[\S4.3]{P2}). Therefore (see, e.g., \cite[Theorem 11.6]{P2}) we have:
\begin{mylist}
\item[(a)] for each $\alpha\in(0,\pi/2)$ there exists $\rho_1>0$ such that
$$S(\alpha,\rho_1,\kappa_t):=\big\{w\in\Complex:|\arg\overline{\kappa}_t(w-p)|<\alpha,\,
|w-p|<\rho_1\big\}\subset f_t(\UD),$$ where $\kappa_t:=-f'_t(1)/|f'_t(1)|$ and $p:=f_t(1)=f_{t_0}(1)$;

\item[(b)] for each $\alpha>\pi/2$ there exist \underline{\textit{no}} $\kappa\in\UC$ and $\rho>0$ such that $S(\alpha,\rho,\kappa)\subset f_t(\UD)$.
\end{mylist}
Similarly,
\begin{mylist}
\item[(c)] $S(\alpha,\kappa_s,\rho_2)\subset f_s(\UD)\subset f_t(\UD)$ for all~$\alpha\in(0,\pi/2)$ and some $\rho_2>0$ depending
    on~$\alpha$.
\end{mylist}
\newcommand{\Arg}{\mathop{\mathrm{Arg}}}
It follows that $\theta:=\arg\big(f'_t(1)/f_s'(1)\big)\in\{0,\pi\}$. Indeed, if $\theta\in(-\pi,0)\cup(0,\pi)$, then setting
$\alpha>\max\big\{|\theta|/2,\pi/2-|\theta|/2\big\}$ in~(a) and~(c) one easily concludes that $$S\big(\alpha+|\theta|/2,\kappa_s
e^{i\theta/2},\rho_3\big)\subset f_t(\UD),\quad\text{where~$\rho_3:=\min\{\rho_1,\rho_2\}$},$$ which contradicts~(b). Hence
$\theta\in\{0,\pi\}$

It remains to show that actually $\theta=\pi$. Suppose on the contrary that $\theta=0$. Then the arcs $\gamma_1:=f_t([0,1))$ and
$\gamma_2:=f_s([0,1))$ approach the point~$p=f_t(\UD)$ through the same disc sector~$S(\pi/4,\rho_1,\kappa_t)$, which is a subset
of~$f_t(\UD)$ for $\rho_1>0$ small enough. Hence $\gamma_1$ and $\gamma_2$ are equivalent as slits in~$f_t(\UD)$, \textit{i.e.}
they represent the same accessible boundary point of~$f_t(\UD)$. Then by \cite[Theorem~1, \S II.3]{Goluzin},
$[0,1)=f_t^{-1}(\gamma_1)$ and $\varphi_{s,t}([0,1))=f_t^{-1}(\gamma_2)$ land at the same point on~$\UC$, namely at the
point~$1$. It follows that $\varphi_{s,t}(1)=1$, which contradicts our assumption. Thus $\theta=\pi$.

\Step2{If (C) holds, then (A) holds too and $t\mapsto f'_t(\sigma)$ is locally absolutely continuous on $[0,+\infty)$, with $\arg f'_t(\sigma)$ being constant.}  We are going to prove that $1$ is a regular fixed point for all $\varphi_{s,t}$'s. By Step 1, the point $1$ is a contact point of~$\varphi_{s,t}$ for any~$t\geq s\geq0$. Let us now fix~$s\ge0$ and study the map $$\Phi_s:[s,+\infty)\ni t\mapsto \varphi_{s,t}(1)\in\UC.$$

\step{2.1}{The map $\Phi_s$ is continuous.} Suppose on the contrary that there
 exists $\varepsilon_0>0$, a point $t_0\ge s$ and a convergent sequence $[s,+\infty)\ni t_n\to
 t_0$ such that $|\varphi_{s,t_n}(1)-\varphi_{s,t_0}(1)|>\varepsilon_0$. From
 the fact (see \cite[Proposition~3.5]{BCM1}) that $\varphi_{s,t_n}\to\varphi_{s,t_0}$ locally uniformly in~$\UD$ as
 $n\to+\infty$ it follows that passing if necessary to a subsequence of~$(t_n)$ we may
 assume that $|\varphi_{s,t_n}(r_n)-\varphi_{s,t_0}(1)|<\varepsilon_0/2$, where $r_n:=1-1/n$, for
 all $n\in\Natural$. Now fix any $T>s$ such that $(t_n)\subset [s,T]$. The
sets $C_n:=\varphi_{s,t_n}([r_n,1))\subset \UD$ form a sequence of K\oe be arcs for the sequence of functions $\varphi_{t_n,T}$.
Indeed, on the one hand by construction, $\diam_{\Complex}(C_n)>\varepsilon_0/2$ for all~$n\in\Natural$; while on the other hand,
$\varphi_{t_n,T}(C_n)=\varphi_{s,T}([r_n,1))$ tends, as $n\to+\infty$, to the point $w_0:=\varphi_{s,T}(1)$. By the
Schwarz\,--\,Pick theorem, $|\varphi'_{t_n,T}(z)|(1-|z|^2)\le 1$ for all $z\in\UD$ and all $n\in\Natural$. Hence
by~\cite[Theorem~9.2, p.\,265]{Pommerenke}, $\varphi_{t_n,T}-w_0\to0$ as~$n\to+\infty$. However, by \cite[Proposition~3.5]{BCM1},
$(\varphi_{t_n,T})$ converges to $\varphi_{t_0,T}$ which is univalent in~$\UD$ (see \cite[Corollary~6.3]{BCM1}). This
contradiction proves the statement of Step~2.1.

\step{2.2}{There exists $\varepsilon>0$ such that $\varphi_{s,t}(1)=1$ for all $t\in[s,s+\varepsilon)$.} According to~(C.3) we can
choose $\varepsilon>0$ so that $\big|\arg\big(f_t'(1)/f_s'(1)\big)\big|<\pi$ for all~$t\in[s,s+\varepsilon)$. Applying
Step 1, we easily conclude that $\varphi_{s,t}(1)=1$ for all such~$t$.

\step{2.3}{Fix~$t\ge s$ and suppose that $\varphi_{s,t}(1)=1$. Then there exists the finite angular derivative
$\varphi'_{s,t}(1)=f'_s(1)/f'_t(1)>0$.} By \cite[Proposition 4.13]{P2}, there exists the angular derivative $\varphi'_{s,t}(1)$, which
can be either~$\infty$ or a positive number. Therefore,
$$
\frac{f_t(\varphi_{s,t}(r))-f_t(1)}{\varphi_{s,t}(r)-1}=\frac{f_{s}(r)-f_{s}(1)}{r-1} \frac{r-1}{\varphi_{s,t}(r)-1}\to
f'_{s}(1)\frac{1}{\varphi'_{s,t}(1)}\in \C
$$
as $(0,1)\ni r\to1$. Thus using Lemma~\ref{LM_derivative_exists} for $g:=f_t$, $\sigma:=1$, $\omega:=f_t(1)\not\in f_t(\UD)$, and
$C:=\varphi_{s,t}([0,1))$, we conclude that $\varphi'_{s,t}(1)=f_s'(1)/f_t'(1)\neq\infty$, which proves the statement of
Step~2.3.

\step{2.4}{For any~$t\ge s$, $\varphi_{s,t}(1)=1$.} Suppose this is not the case. Let $t_*:=\inf\{t\ge
s:\varphi_{s,t}(1)\neq1\}$. Then, by Step 2.2, $t_*>s$ and for all $t\in(s,t_*)$ we have $\varphi_{s,t}(1)=1$. Hence by Step~2.1,
$\varphi_{s,t_*}(1)=1$. Furthermore, by Step~2.3 applied to $t:=t_*$, $\varphi_{s,t_*}'(1)\in\Complex$. Therefore,
$\varphi_{s,t}(1)=\varphi_{t_*,t}(1)$ for all $t\ge t_*$. But by Step 2.2 applied with~$t_*$ substituted for~$s$,
$\varphi_{t_*,t}(1)=1$ provided $t-t_*$ is small enough. Thus $t_*<\inf\{t\ge s:\varphi_{s,t}(1)\neq1\}$. This contradiction
proves the statement of~Step~2.4.\vskip3mm

Statements of Step~2.3 and~2.4 imply assertion~(A). Hence~\eqref{EQ_G-lambda} holds and, in particular, $t\mapsto f'_t(1)=f_0'(1)e^{\Lambda(t)}$ is locally absolutely continuous. The proof of Step~2 is complete.

\Step3{(A) implies (C).} For any $s\ge0$ and $t\ge s$, since the point $1$ is a BRFP of the function $\varphi_{s,t}$,  the
angular derivative~$\varphi_{s,t}'(1)$ is a positive number (see, e.g., \cite[Proposition 4.13]{P2}).

Fix $t\ge0$. Assume first that $t>t_0$. Then $f_t(\varphi_{t_0,t}(r))=f_{t_0}(r)$ for all $r\in[0,1)$. In particular,
$f_{t_0}(1)\in\partial f_t(\UD)$, because $\varphi_{t_0,t}(r)\to1$ as $(0,1)\ni r\to1$. Moreover,
$$
\frac{f_t(\varphi_{t_0,t}(r))-f_{t_0}(1)}{\varphi_{t_0,t}(r)-1}=
\frac{f_{t_0}(r)-f_{t_0}(1)}{r-1}\frac{r-1}{\varphi_{t_0,t}(r)-1}\to f'_{t_0}(1)\frac{1}{\varphi'_{t_0,t}(1)}\in \C^*
$$
as $(0,1)\ni r\to1$. Therefore, by Lemma~\ref{LM_derivative_exists} applied to $g:=f_t$, $\sigma:=1$, $\omega:=f_{t_0}(1)\not\in
f_t(\UD)$, and $C:=\varphi_{s,t}([0,1))$, the angular limit $\lim_{z\to1}f_t(z)$ exists and equals~$f_{t_0}(1)$, and the angular
derivative $f'_{t}(1)$ of $f_t$ at~$1$ exists and equals $f_{t_0}'(1)/\varphi_{t_0,t}'(1)\in\Complex^*$.

Now consider the case $t<t_0$. We have $f_t(\UD)\subset f_{t_0}(\UD)\not\ni f_{t_0}(1)$. Note that  the curve $r\in [0,1)\mapsto
\varphi_{t,t_0}([0,1))$ approaches the point $1$ tangentially to the real axis. Therefore,
$$
\frac{f_t(r)-f_{t_0}(1)}{r-1}=\frac{f_{t_0}(\varphi_{t,t_0}(r))-f_{t_0}(1)}{\varphi_{t,t_0}(r)-1}\,\,\frac{\varphi_{t,t_0}(r)-1}{r-1}\to
f'_{t_0}(1)\varphi'_{t,t_0}(1)\in \C^*
$$
as $(0,1)\ni r\to1$. Again using Lemma~\ref{LM_derivative_exists} for $g:=f_t$, $\sigma:=1$, $\omega:=f_{t_0}(1)$, and $C:=[0,1)$, we
conclude that the angular limit $\angle\lim_{z\to1}f_t(z)$ exists and equals~$f_{t_0}(1)$ and that the angular derivative $f'_{t}(1)$ of
$f_t$ at~$1$ exists and equals $f_{t_0}'(1)\varphi_{t,t_0}'(1)\in\Complex^*$.

Note that in both cases $f_t'(1)/f'_{t_0}(1)>0$. Thus the proof is now complete. \proofbox

\section{Embedding into evolution families and inclusion chains}\label{embedding-ch}

In this section we prove Theorem \ref{TH_embedd}. Let us recall the following definition from \cite{CAOT}:

\begin{definition}\label{D_conf-embed}
Let $D_1\subset D_2\subsetneq\Complex$ be two simply connected domains, let $F_j$, for $j=1,2$, be a conformal mapping of~$\UD$ onto
$D_j$ and let $P$ be a prime end of the domain~$D_1$. The domain $D_1$ is said to be {\sl conformally embedded in the
domain~$D_2$ at the prime end~$P$} if there exists a regular contact point $\xi\in\UC$ of the mapping~$\varphi:=F_2^{-1}\circ
    F_1$ such that the point $\omega:=\varphi(\xi)$ corresponds under the mapping~$F_1$ to the prime end~$P$.
\end{definition}

For the definition of primes ends and their correspondence under conformal mappings, see {\it e.g.}~\cite[Chapter~9]{ClusterSets}. Note that the choice of the conformal mappings~$F_j$ in the above definition is clearly irrelevant.

Denote by $r(w_0,D)$ the conformal radius of a simply connected domain~$D$ w.r.t. a point~$w_0\in D$. First we prove the following statement:

\begin{proposition}\label{PR_incl_chains}
Let $(D_t\subsetneq\Complex)_{t\ge0}$ be a family of simply connected domains and ${\Lambda:[0,+\infty)\to\Real}$ a locally
absolutely continuous function with $\Lambda(0)=0$. The following three statements are equivalent:
\begin{mylist}
\item[(I)] The family~$(D_t)$ satisfies the following conditions:
\begin{itemize}
    \item[(Ia)] $D_s\subset D_t$ whenever $0\le s\le t<+\infty$;
    \item[(Ib)] the function $[0,+\infty)\ni t\mapsto r(w_0,D_t)\in(0,+\infty)$ is locally absolutely continuous for some (and hence every) point~$w_0\in D_0$;
    \item[(Ic)] there exists a prime end $P$ of the domain~$D_0$ such that for each $t>0$ the
    domain $D_0$ is embedded in~$D_t$ conformally at the prime end~$P$.
\end{itemize}
\item[(II)] There exists a Loewner chain $(f_t)$ that satisfies the following conditions:
\begin{itemize}
\item[(IIa)] $f_t(\UD)=D_t$ for all~$t\ge0$; \item[(IIb)] the evolution family $\varphi_{s,t}:=f_t^{-1}\circ f_s$, $0\le s\le
    t<+\infty$,
 of~$(f_t)$ has a boundary regular fixed point at~$1$;
 \item[(IIc)] the spectral function of~$(\varphi_{s,t})$ at the point~$1$
 coincides with~$\Lambda$.
\end{itemize}
\item[(III)] There exists a Loewner chain $(f_t)$ that satisfies the above conditions (IIa) and (IIb) (but not
    necessarily~(IIc)).
\end{mylist}
\end{proposition}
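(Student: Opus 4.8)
The plan is to establish the cycle of implications $\text{(II)}\Rightarrow\text{(III)}\Rightarrow\text{(I)}\Rightarrow\text{(II)}$. The implication $\text{(II)}\Rightarrow\text{(III)}$ is trivial since (III) just drops condition (IIc). For $\text{(III)}\Rightarrow\text{(I)}$, suppose $(f_t)$ is a Loewner chain with $f_t(\UD)=D_t$ and evolution family $(\varphi_{s,t})$ having a BRFP at $1$. Then (Ia) is immediate from LC2. Condition (Ib) follows from the fact that $r(f_t(0),D_t)=|f_t'(0)|$ and, by Theorem~\ref{pav} together with~\cite[Theorem~7.1]{BCM1} (or directly from the absolute continuity statements in the Loewner theory), $t\mapsto f_t'(0)$ is locally absolutely continuous; one must also check that changing the base point $w_0$ preserves local absolute continuity, which is standard since the conformal radii at two points of $D_0$ differ by a factor that is a fixed smooth function of the position of one point in the other's uniformizing disc. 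For (Ic): since $1$ is a BRFP of every $\varphi_{s,t}=f_t^{-1}\circ f_s$, the point $\xi=1$ is a regular contact point of each $\varphi_{0,t}$ fixing $1$, so by Definition~\ref{D_conf-embed} the domain $D_0$ is conformally embedded in each $D_t$ at the prime end $P$ of $D_0$ corresponding under $f_0$ to the point $1\in\UC$; here one uses that $f_0$ is conformal along the radius to $1$ in the sense needed, but in fact Definition~\ref{D_conf-embed} only requires the existence of the regular contact point and the correspondence of prime ends, both of which are automatic.

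The substantive implication is $\text{(I)}\Rightarrow\text{(II)}$. Here the plan is: start from the family $(D_t)$ and choose conformal maps $F_t:\UD\to D_t$. By (Ic), for each $t$ the map $g_t:=F_t^{-1}\circ F_0$ has a regular contact point $\xi_t\in\UC$ with $F_0$-prime-end of $\xi_t$ equal to $P$; by the uniqueness of the prime end correspondence, all the contact points $\xi_t$ are ``the same'' after suitably normalizing $F_t$, i.e.\ one may post-compose each $F_t$ with an automorphism of $\UD$ so that the contact point is always $1$ and, using the freedom in the Riemann maps, arrange the maps to form a Loewner chain. The existence of \emph{some} Loewner chain $(f_t)$ with $f_t(\UD)=D_t$ follows from a Loewner-chain existence theorem for increasing families of simply connected domains with locally absolutely continuous conformal radius (this is essentially the content of Theorem~\ref{pav} run in reverse together with classical results); the point is that condition (Ib) is exactly the regularity needed for $(f_t)$ to be a Loewner chain of order $d=\infty$ (or $d=1$), and (Ic) is what forces the associated evolution family to fix the point $1$ on the boundary. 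One then verifies that $1$ is in fact a \emph{regular} contact point of each $\varphi_{0,t}=f_t^{-1}\circ f_0$, not merely a contact point, by invoking Lemma~\ref{LM_derivative_exists}: the univalence of $f_t$ and the fact that $f_0(1)\notin f_t(\UD)$ give normality of $(f_t(z)-f_0(1))/(z-1)$, and the embedding condition provides a slit along which the limit exists, yielding the finite angular derivative. This upgrades (IIb). Finally (IIc) is not automatic — the original Riemann maps give \emph{some} spectral function $\Lambda_1$, which need not equal the prescribed $\Lambda$; but since both $\Lambda$ and $\Lambda_1$ are locally absolutely continuous with $\Lambda(0)=\Lambda_1(0)=0$, one can reparametrize and pre-compose with a boundary-data-preserving modification to match $\Lambda$. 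Actually a cleaner route: prove $\text{(I)}\Rightarrow\text{(II)}$ directly for the given $\Lambda$ by using Theorem~\ref{main1} in the form (B)$\Rightarrow$(A) — construct a Herglotz vector field $G$ with a BRNP at $1$ of dilation $G'(1,t)=-\Lambda'(t)$ whose associated Loewner chain has ranges $D_t$.

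The main obstacle will be the matching of the spectral function to the prescribed $\Lambda$, i.e.\ proving $\text{(I)}\Rightarrow\text{(II)}$ with the \emph{exact} $\Lambda$ rather than just some spectral function. The difficulty is that the geometry of $(D_t)$ together with a choice of Riemann maps determines the evolution family up to the freedom of pre/post-composition by automorphisms and reparametrization, and one must show this freedom is exactly enough to realize any prescribed locally absolutely continuous $\Lambda$ with $\Lambda(0)=0$. I expect this to be handled by a ``normalization plus time-change'' argument: first build the Loewner chain $(f_t)$ from $(D_t)$ using (Ia)--(Ib) via the classical construction, obtaining an evolution family with BRFP at $1$ (using (Ic) and Lemma~\ref{LM_derivative_exists} as above) and some spectral function $\Lambda_1$; then, since by Theorem~\ref{TH_contact}/Theorem~\ref{main1} $\Lambda_1(t)=-\int_0^tG'(1,s)\,ds$ is already forced by the Herglotz field, observe that modifying the Loewner chain by $f_t\mapsto f_t\circ \gamma_t$ for a suitable family $\gamma_t\in\Moeb(\UD)$ fixing $1$ (equivalently, conjugating the evolution family) changes the spectral function additively by the spectral function of $(\gamma_t)$, which can be any prescribed locally absolutely continuous function vanishing at $0$ by the semigroup/hyperbolic-automorphism calculus; choosing $(\gamma_t)$ to realize $\Lambda-\Lambda_1$ completes the proof. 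The remaining routine checks — that this modification preserves the Loewner-chain axioms, that $f_t(\UD)=D_t$ is unchanged since $\gamma_t$ is an automorphism, and that the contact point stays at $1$ — are straightforward.
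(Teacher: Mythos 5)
Your overall architecture matches the paper's: you establish $\text{(II)}\Rightarrow\text{(III)}\Rightarrow\text{(I)}$ and then the substantive $\text{(I)}\Rightarrow\text{(II)}$, and your idea of conjugating by hyperbolic automorphisms $m_t$ fixing~$1$ to steer the spectral function from a provisional~$\Lambda_1$ to the prescribed~$\Lambda$ is exactly the paper's final step (their $m_t(z)=(z+x(t))/(1+x(t)z)$ with $x(t)=(e^{\Lambda(t)}-1)/(e^{\Lambda(t)}+1)$, using \cite[Lemmas~2.8 and~3.2]{RMIA}). Your derivation of $\text{(III)}\Rightarrow\text{(I)}$ is also fine, though the paper just cites \cite[Theorem~2.3]{CAOT} and \cite[Lemma~4.7]{CAOT} rather than re-deriving (Ia)--(Ic) directly.

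However, there is a genuine gap in $\text{(I)}\Rightarrow\text{(II)}$, concentrated in the phrase ``using the freedom in the Riemann maps, arrange the maps to form a Loewner chain''. The classical construction (e.g.\ \cite[Theorem~2.3]{CAOT}) does produce a Loewner chain $(g_t)$ with $g_t(\UD)=D_t$, but its evolution family $(\phi_{s,t})$ has the relevant regular contact point at some varying location, not fixed at~$1$. To repair this you must compose with a family of M\"obius automorphisms $h_t\in\Moeb(\UD)$ chosen so the contact point stays at~$1$. For the resulting family $g_t\circ\tilde h_t$ to satisfy LC3 you need $t\mapsto \tilde h_t$ to be locally absolutely continuous; concretely, you need that the trajectory $t\mapsto\phi_{0,t}(\sigma_0)$ of the regular contact point and the spectral function $\Lambda_0$ of $(\phi_{s,t})$ at~$\sigma_0$ are \emph{both} locally absolutely continuous. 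Neither Lemma~\ref{LM_derivative_exists} nor anything in your sketch establishes this; in the paper it is precisely the content of Theorem~\ref{TH_contact}, the main result of Section~\ref{CONTACT-CH}. Indeed, the paper explicitly remarks that the need to invoke Theorem~\ref{TH_contact} is why Proposition~\ref{PR_incl_chains} carries no order parameter~$d$. So the step you characterize as ``routine'' is in fact the technical heart of the implication, and as written your proposal has no mechanism to supply the required regularity of the automorphism family.
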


\begin{remark}
By \cite[Theorem 1.8 and Remark~1.7]{CAOT}, conditions (Ia) and~(Ib) imply that:
\begin{itemize}
    \item[(KC1)] for any $t>0$, $D_t=\bigcup_{s\in[0,t)} D_s$;
    \item[(KC2)] for any $s\ge0$, $D_s$ is a connected component of the interior of the set $\bigcap_{t>s} D_t$.
\end{itemize}
\end{remark}
\begin{definition}[\cite{CAOT}]
A family~$(D_t\subsetneq\Complex)_{t\ge0}$ of simply connected domains is said to be:
\begin{mylist}
\item[-] an {\sl inclusion chain}, if it satisfies conditions (Ia), (KC1) and (KC2);
\item[-] an {\sl L-admissible family} if it satisfies conditions (Ia) and (Ib);
\item[-] a {\sl chordally admissible family}, if it satisfies conditions (Ia), (Ib) and (Ic).
\end{mylist}
\end{definition}

\begin{remark}
A result similar to the above proposition, \cite[Theorem~4.8]{CAOT}, was earlier proved for the case of the common boundary Denjoy\,--\,Wolff point. In Proposition~\ref{PR_incl_chains} we consider the more general case of common BRFPs, but even so most of the arguments from~\cite{CAOT} can be still used. For example, the equivalence between (II) and~(III) in Proposition~\ref{PR_incl_chains} is essentially the same as the equivalence between (c) and~(d) in \cite[Theorem~4.8]{CAOT}. The substantial improvement, made by combining methods from~\cite{CAOT} with Theorem~\ref{TH_contact}, resides in the fact that, according to Proposition~\ref{PR_incl_chains}, for any chordally admissible family~$(D_t)$ there exists a Loewner chain $(f_t)$ of chordal type \textit{such that ${f_t(\UD)=D_t}$ for all~$t\ge0$}, while \cite[Theorem~4.8]{CAOT} guarantee \textit{only the equality of sets~${\{f_t(\UD):t\ge0\}}$ and~${\{D_t:t\ge0\}}$}. The necessity to use Theorem~\ref{TH_contact} in the proof explains why the parameter~$d$ (the order of the evolution family $(\varphi_{s,t})$) does not appear in Proposition~\ref{PR_incl_chains}.
\end{remark}

\begin{proof}[\textbf{Proof of Proposition~\ref{PR_incl_chains}.}] The proof is
divided into three steps.

\Step1{(II)$\Rightarrow$(III)$\Rightarrow$(I).} The fact that (II) implies (III) is completely trivial. Assume that~(III) takes place. Then by \cite[Theorem~2.3]{CAOT}, conditions (Ia) and (Ib) hold, \textit{i.e.} $(D_t)$ is an L-admissible family. Now applying \cite[Lemma~4.7]{CAOT} to $(F_t):=(f_t)$ we see that $(D_t)$ is in fact chordally admissible, \textit{i.e.}, (Ic) holds as well.

\Step2{(I) implies (III), with the evolution family~$(\varphi_{s,t})$ satisfying~$\varphi_{s,t}'(1)=1$.}
By \cite[Lemma~4.7]{CAOT} there exists a family of conformal maps~$(F_t:\UD\to\Complex)_{t\ge0}$ such that
$F_t(\UD)=D_t$ for all~$t\ge0$ and that the function $\psi_t:=F_t^{-1}\circ F_0$ has a BRFP at~$1$. At the same time, by
\cite[Theorem~2.3]{CAOT} there exists a Loewner chain~$(g_t)$ such that $g_t(\UD)=\Omega_t$ for all~$t\ge0$.

Since $g_t(\UD)=F_t(\UD)$, it follows that for any $t\ge0$ there exists $h_t\in\Moeb(\UD)$ such that $F_t=g_t\circ h_t$ and hence
$\phi_{0,t}=h_t\circ\psi_t\circ h_0^{-1}$ for all $t\ge0$, where $(\phi_{s,t})$ is the evolution family of the Loewner
chain~$(g_t)$. It follows that $\sigma_0:=h_0(1)$ is a regular contact point of $(\phi_{s,t})$ and that
$h_t(1)=\phi_{0,t}(\sigma_0)$, $|h_t'(1)|\exp \Lambda_0(t)=|h_0'(1)|$ for all~$t\ge0$, where $\Lambda_0$ is the spectral function
of~$(\phi_{s,t})$ at~$\sigma_0$. Therefore, there exists a family $(\ell_t)_{t\ge0}\subset\Moeb(\UD)$ of automorphisms with
parabolic  fixed point at~$1$ such that
$$\tilde h_t:=h_t\circ \ell_t=\big(z\mapsto
\phi_{0,t}(\sigma_0)\overline\sigma_0 z\big)\circ h_0\circ \left(z\mapsto \frac{z+x(t)}{1+x(t)z}\right),$$ where
$x(t):=(e^{\Lambda_0(t)}-1)/(e^{\Lambda_0(t)}+1)$. Note that~$\ell_0=\id_\UD$.

By Theorem~\ref{TH_contact}, the functions~$\Lambda_0$ and $t\mapsto\phi_{0,t}(\sigma_0)$ are locally absolutely continuous.
Therefore, by \cite[Lemma 2.8]{RMIA}, the formula $\varphi_{s,t}=\tilde h_t^{-1}\circ\phi_{s,t}\circ \tilde h_s$,
$t\geq s\geq0$, defines an evolution family~$(\varphi_{s,t})$. Note that the univalent functions $f_t:= F_t\circ\ell_t=g_t\circ
\tilde h_t$ satisfy $f_t\circ\varphi_{s,t}=f_s$ for any~$t\geq s\geq0$. Thus $(f_t)$ is a Loewner chain associated
with~$(\varphi_{s,t})$, see \cite[Lemma~3.2]{RMIA}. Finally, note that
$\varphi_{0,t}=\ell_t^{-1}\circ\psi_t$ has a parabolic fixed point at~$1$.

\Step3{(I)$\Rightarrow$(II).}  We are going to modify the Loewner chain~$(f_t)$ constructed in the proof of
Step 2 in such a way that the spectral function of the evolution family of this new Loewner chain
at~$1$ coincides with~$\Lambda$. To this end we define $m_t(z):=(z+x(t))/(1+x(t)z)$ for all $z\in\UD$ and $t\ge0$, where
$x(t):=(e^{\Lambda(t)}-1)/(e^{\Lambda(t)}+1)$. Then $\psi_{s,t}:=m_t\circ\varphi_{s,t}\circ m_s^{-1}$ satisfies $\psi_{s,t}(1)=1$
and $\psi_{s,t}'(1)=\exp\big(\Lambda(s)-\Lambda(t)\big)$ for all $t\geq s\geq0$. Recall that $\Lambda$ is locally absolutely
continuous by the hypothesis. Therefore, applying \cite[Lemmas~2.8 and~3.2]{RMIA} in the same way as in the proof of
Step 2, we see that the functions $g_t=f_t\circ m_t^{-1}$ form a Loewner chain whose evolution family
is~$(\psi_{s,t})$. This is the desired Loewner chain and the proof is now complete.
\end{proof}

\begin{remark}\label{RM_incl_chains}
The Loewner chains~$(f_t)$ we construct in the proof of Proposition~\ref{PR_incl_chains} are such that the point~$1$
corresponds under the conformal map~$f_0:\UD\to D_0$ to the prime end~$P$ from condition~(Ic).
\end{remark}

As a preparation to the proof of~Theorem~\ref{TH_embedd}, we need the following lemmata.
\begin{lemma}\label{LM_boundedLCh}
Let $G\subsetneq\UD$ be a simply connected domain, $w_0\in G$, and $t_0>0$. Then there exists an L-admissible family of domains~$(D_t)$ such
that $D_0=G$, $D_{t_0}=\UD$, and $D_t\subset\{z:\Re z<1\}$ for all $t\ge0$.
\end{lemma}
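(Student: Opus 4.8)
The plan is to build a continuous increasing family of simply connected subdomains of $\UD$ joining $G$ to $\UD$, and then to reparametrize the time variable by the conformal radius so as to gain local absolute continuity. First note that the condition $D_t\subset\{\Re z<1\}$ is automatic: if $(D_t)_{t\ge0}$ is increasing with $D_0=G$, $D_{t_0}=\UD$ and $D_t=\UD$ for all $t\ge t_0$, then $D_t\subset D_{t_0}=\UD$ for $t\le t_0$, and $\UD\subset\{\Re z<1\}$. So the task reduces to producing such an increasing family that is moreover L-admissible, i.e.\ with $t\mapsto r(w_0,D_t)$ locally absolutely continuous.

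The reparametrization step is routine. Suppose $(\Omega_s)_{s\in[0,1]}$ is a strictly increasing (by inclusion) family of simply connected subdomains of $\UD$ with $\Omega_0=G$ and $\Omega_1=\UD$, continuous in the Carath\'eodory kernel topology (with base point $w_0$) at each $s$. By the Carath\'eodory kernel theorem $\rho(s):=r(w_0,\Omega_s)$ is continuous, and by the strict monotonicity of the conformal radius under strict inclusion (a Schwarz lemma argument) it is strictly increasing; since $\rho(0)=r(w_0,G)$ and $\rho(1)=r(w_0,\UD)$ it is a homeomorphism of $[0,1]$ onto $[r(w_0,G),r(w_0,\UD)]$. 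Putting $\ell(t):=r(w_0,G)+\bigl(r(w_0,\UD)-r(w_0,G)\bigr)t/t_0$, I would define $D_t:=\Omega_{\rho^{-1}(\ell(t))}$ for $0\le t\le t_0$ and $D_t:=\UD$ for $t\ge t_0$. Then $D_0=G$, $D_{t_0}=\UD$, the family is increasing, and $r(w_0,D_t)=\ell(t)$ for $t\le t_0$ while it is constant for $t\ge t_0$; hence $t\mapsto r(w_0,D_t)$ is locally absolutely continuous. Together with the first paragraph this would prove the lemma, \emph{provided} such a family $(\Omega_s)$ exists.

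So the real content is the construction of $(\Omega_s)$: one must enlarge $G$ all the way up to $\UD$ through simply connected domains. I would start from an exhausting sequence $G=\Omega^{(0)}\subsetneq\Omega^{(1)}\subsetneq\cdots$ of simply connected subdomains of $\UD$ with $\bigcup_n\Omega^{(n)}=\UD$ (e.g.\ taking $\Omega^{(n)}$ to be the $w_0$-component of the simply connected hull inside $\UD$ of $G\cup\{|z|<1-1/n\}$), and then interpolate continuously between consecutive $\Omega^{(n)}$ and $\Omega^{(n+1)}$. The latter I would do by arranging (after a harmless perturbation of the $\Omega^{(n)}$) that $\Omega^{(n+1)}$ is obtained from $\Omega^{(n)}$ by adjoining finitely many ``crosscut cells'' --- Jordan subdomains of $\UD\setminus\overline{\Omega^{(n)}}$ each meeting $\partial\Omega^{(n)}$ along a single boundary crosscut --- since attaching a crosscut cell preserves simple connectivity and there is an obvious continuous strictly increasing interpolation of simply connected domains obtained by sliding a crosscut across the cell. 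Concatenating finitely many such slides joins $\Omega^{(n)}$ to $\Omega^{(n+1)}$; reparametrizing the $n$-th interpolation onto $[1-2^{-n},1-2^{-(n+1)}]$ and gluing gives a continuous strictly increasing family $(\Omega_s)_{s\in[0,1)}$ with union $\UD$, and setting $\Omega_1:=\UD$ (its Carath\'eodory kernel as $s\to1^-$) extends it continuously to $[0,1]$.

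I expect the main obstacle to be precisely this last step: the planar-topology bookkeeping needed to realize each enlargement as an attachment of crosscut cells, and then to verify that the resulting family is simply connected throughout, strictly monotone, and Carath\'eodory-continuous both at the gluing parameters and at $s=1$. (Alternatively one could try to quote a suitable existence statement for increasing families of domains from the inclusion-chain results of~\cite{CAOT} used elsewhere in this section, but the above is self-contained.) Everything else --- the half-plane reduction and the reparametrization by conformal radius --- is routine.
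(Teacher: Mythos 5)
Your reparametrization idea is correct and self-contained: once a Carath\'eodory-continuous, strictly increasing family $(\Omega_s)_{s\in[0,1]}$ of simply connected subdomains of $\UD$ joining $G$ to $\UD$ is in hand, the conformal radius $\rho(s)=r(w_0,\Omega_s)$ is a continuous, strictly increasing bijection onto $[r(w_0,G),1]$ (Carath\'eodory kernel theorem plus the Schwarz-lemma strict monotonicity), and composing with $\rho^{-1}$ and an affine time change gives an L-admissible family with $r(w_0,D_t)$ piecewise affine. The reduction of the condition $D_t\subset\{\Re z<1\}$ to $D_t\subset\UD$ is also fine.

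The genuine gap is precisely where you flag it: the existence of the interpolating family $(\Omega_s)$. The ``crosscut-cell'' scheme is not carried out, and it cannot be carried out in the form you describe for an \emph{arbitrary} simply connected $G\subsetneq\UD$. The step from $\Omega^{(0)}=G$ to $\Omega^{(1)}$ is the problem: you cannot perturb $G$, and $\partial G$ can be wild (e.g.\ nowhere locally connected, or with $\overline G=\overline\UD$), so $\Omega^{(1)}\setminus\overline{\Omega^{(0)}}$ need not decompose into finitely many Jordan ``cells'' attached along crosscuts --- indeed it can be empty, or have infinitely many components, or components with badly non-Jordan boundary. Verifying simple connectivity of each interpolant, strict monotonicity, and kernel-continuity at every gluing parameter and at $s=1$ would require substantial additional work even in favorable cases. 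In fact, the existence of such a continuous nested family joining a given bounded univalent image to $\UD$ is essentially equivalent to the classical parametric representation of the class $\mathcal S^M$, which is exactly the non-elementary input the paper invokes. The paper's proof normalizes $w_0=0$, $t_0=1$, puts $f_0=M\phi\in\mathcal S^M$ where $M=1/r(G,0)$ and $\phi:\UD\to G$ is the Riemann map, and quotes the classical representation of $\mathcal S^M$ by the radial Loewner--Kufarev equation on $[0,T]$, $T=\log M$ (Aleksandrov, or Pommerenke's Theorem~6.1/6.3). This yields an evolution family $(\varphi_{s,t})$ with $\varphi_{0,T}=\phi$ and $\varphi_{s,t}'(0)=e^{s-t}$, and the images $\varphi_{s,T}(\UD)$ form the required increasing family from $G$ to $\UD$ with explicitly smooth conformal radius; no reparametrization is needed. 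So your outer reduction is essentially the same as the paper's, but the core existence step is left as a sketch whose proposed mechanism does not obviously close, whereas the paper replaces it with a citation to the parametric representation theorem.
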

\begin{proof}
First of all without loss of generality we may assume that $t_0=1$. Furthermore, using M\"obius transformations we may assume that
$w_0=0$. Indeed, if $\ell\in\Moeb(\UD)$ and $(D_t)$ is an L-admissible family such that $D_1=\UD$, then $(\tilde D_t)$ defined by
$\tilde D_t:=\ell(D_t)$ for $t\in[0,1]$ and $\tilde D_t:=D_t$ for all $t\ge1$, is again an L-admissible family.

Denote $M:=1/r(G,0)>1$, and let $\phi$ be the conformal map of~$\UD$ onto $G$ normalized by $\phi(0)=0$ and $\phi'(0)>0$. Then the
function $f_0=M\varphi$ belongs to the class $$\mathcal
S^M:=\Big\{f(z)=z+\textstyle{\sum_{n=2}^{+\infty}a_nz^n}:\,|f(z)|<M\text{~for all~$z\in\UD$ and $f$ is univalent in~$\UD$}\Big\}.
$$
It is known (see, {\it e.g.}, \cite[p.\,69\,--\,70]{Aleksandrov}) that for any $f_0\in \mathcal S^M$ there exists a function
${p:\UD\times[0,T]\to\Complex}$, where $T:=\log M$, such that
\begin{mylist}
\item[(i)] for any $t\in[0,T]$ the function~$p(\cdot,t)$ belongs to the Carath\'eodory class, \textit{i.e.} it is a holomorphic function in~$\UD$ with positive real part and normalized by~$p(0,t)=1$;

\item[(ii)] for each
    $z\in\UD$ the function~$p(z,\cdot)$ is measurable in~$[0,T]$;

\item[(iii)] for all $z\in\UD$ we have $f_0(z)=M\psi_{z,0}(T)$, where $w(t)=\psi_{z,s}(t)$, $s\in[0,T]$, $z\in\UD$,
    stands for the unique solution to the initial value problem
$$
\frac{dw(t)}{dt}=-w(t)p\big(w(t),t\big),\quad w(s)=z.
$$
\end{mylist}
For each $s,t\in[0,T]$ with $s\le t$ the function $\varphi_{s,t}(z):=\psi_{z,s}(t)$ is a well-defined univalent holomorphic
self-map of $\UD$ with $\varphi_{s,t}(0)=0$, $\varphi_{s,t}'(0)=e^{s-t}$ and $\varphi_{u,t}\circ\varphi_{s,u}=\varphi_{s,t}$ whenever
$0\le s\le u\le t\le T$, see, \textit{e.g.}, \cite[proof of~Theorem 6.3, p.\,160\,--\,162]{Pommerenke}. Note also that by
construction, $\varphi_{0,T}=\phi$. In particular, it follows that the family of simply connected domains
$D_t:=\varphi_{(1-t)T,\,T}(\UD)$ for all $t\in[0,1]$ and $D_t:=t\UD+1-t$ for all $t\ge1$, satisfies conditions $D_0=G$, $D_1=\UD$,
$D_s\subset D_t$ whenever $0\le s\le t\le 1$, and that $0\le t\mapsto r(D_t,0)$ is a piecewise smooth function. Then by
the very definition, $(D_t)$ is an L-admissible family. Finally, by construction, $D_t\subset\{z:\Re z<1\}$ for all $t\ge0$. The proof is
now complete.
\end{proof}

\begin{remark}
An alternative proof of the above lemma, not using representation of the class $\mathcal S^M$, can be obtained by a modification
of the proof of \cite[Theorem~6.1 on p.\,159]{Pommerenke}.
\end{remark}

\begin{lemma}\label{LM_conformal-embedding}
Let $D_0\subset D_\infty\subsetneq\Complex$ be two simply connected domains. If $D_0$ is embedded in~$D_\infty$ conformally at
some prime end~$P$, then it is also conformally embedded at $P$ in any simply connected domain~$D$ satisfying~$D_0\subset D\subset
D_\infty$.
\end{lemma}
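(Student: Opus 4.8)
The plan is to reduce the assertion to a one-line fact about compositions of holomorphic self-maps of~$\UD$. First I would fix conformal mappings $F_0,F,F_\infty$ of $\UD$ onto $D_0,D,D_\infty$ and let $\sigma\in\UC$ be the point corresponding under~$F_0$ to the prime end~$P$; observe that $\sigma$ depends only on $F_0$ and~$P$, not on the ambient larger domain. Since the notion of conformal embedding does not depend on the chosen conformal maps (as noted right after Definition~\ref{D_conf-embed}), the hypothesis says precisely that $\sigma$ is a regular contact point of $\varphi_\infty:=F_\infty^{-1}\circ F_0$, and what we must prove is that $\sigma$ is a regular contact point of $\varphi:=F^{-1}\circ F_0$. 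Because $D_0\subset D\subset D_\infty$, the maps $\varphi$, $\varphi_\infty$ and $\psi:=F_\infty^{-1}\circ F$ all lie in $\Hol(\UD,\UD)$, and by construction one has the factorization $\varphi_\infty=\psi\circ\varphi$. So everything reduces to the following: \emph{if $\sigma$ is a regular contact point of $\psi\circ\varphi$ for some $\psi\in\Hol(\UD,\UD)$, then $\sigma$ is a regular contact point of~$\varphi$.}

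For this I would use the boundary dilatation coefficient $\alpha_h(\sigma)$ together with Theorem~\ref{LM_Julia}, which tells us that $\sigma$ is a regular contact point of~$h$ exactly when $\alpha_h(\sigma)<+\infty$. The only analytic input needed is a one-sided Schwarz\,--\,Pick estimate: for every $\psi\in\Hol(\UD,\UD)$ there is a constant $c=c(\psi)>0$ with $1-|\psi(w)|\ge c\,(1-|w|)$ for all $w\in\UD$. This follows at once from the Schwarz\,--\,Pick inequality applied with base point~$0$, which gives $1-|\psi(w)|^2\ge\frac{1-|\psi(0)|}{1+|\psi(0)|}(1-|w|^2)$; one may take $c=\frac{1-|\psi(0)|}{2(1+|\psi(0)|)}$, which is positive since $\psi(0)=F_\infty^{-1}(F(0))\in\UD$. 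Substituting $w=\varphi(z)$ and dividing by $1-|z|$ yields $\frac{1-|\varphi_\infty(z)|}{1-|z|}\ge c\,\frac{1-|\varphi(z)|}{1-|z|}$ for every $z\in\UD$; taking the lower limit as $z\to\sigma$ then gives $\alpha_\varphi(\sigma)\le c^{-1}\alpha_{\varphi_\infty}(\sigma)$. Since $\alpha_{\varphi_\infty}(\sigma)<+\infty$ by hypothesis, we get $\alpha_\varphi(\sigma)<+\infty$, and Theorem~\ref{LM_Julia} (implication (ii)$\Rightarrow$(i)) completes the proof.

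I do not expect a serious obstacle: the computation is short, and the only point requiring genuine care is the bookkeeping at the outset, namely checking that the boundary point $\sigma$ attached to~$P$ is literally the \emph{same} whether the configuration is parametrized through $D$ or through~$D_\infty$, so that ``$D_0$ is conformally embedded at~$P$'' becomes the single condition $\alpha(\sigma)<+\infty$ for the corresponding self-map in each case. (If one prefers, one can alternatively exploit that $\psi=F_\infty^{-1}\circ F$ is a conformal equivalence of $\UD$ onto $F_\infty^{-1}(D)\subsetneq\UD$ and that $\varphi=\psi^{-1}\circ\varphi_\infty$, but the estimate above makes this extra structure unnecessary.)
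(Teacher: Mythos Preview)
Your proof is correct and takes a genuinely different route from the paper's.

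The paper first reduces to $D_\infty=\UD$ by conformal invariance, then traces the slit $\gamma:=\phi_0([0,\sigma_0))$ through~$\phi^{-1}$ to a curve $C\subset\UD$ landing at some~$\sigma\in\UC$, and finally uses the normality Lemma~\ref{LM_derivative_exists} (applied to the univalent~$\phi$) to upgrade the radial limit of the incremental ratio of $\varphi=\phi^{-1}\circ\phi_0$ at~$\sigma_0$ to an angular one, checking along the way that this limit is finite and nonzero. Your argument bypasses all of this: you keep $D_\infty$ general, write the factorization $\varphi_\infty=\psi\circ\varphi$ with $\psi=F_\infty^{-1}\circ F\in\Hol(\UD,\UD)$, and use the Julia\,--\,Wolff\,--\,Carath\'eodory equivalence $(\mathrm{i})\Leftrightarrow(\mathrm{ii})$ of Theorem~\ref{LM_Julia} together with the Schwarz\,--\,Pick bound $1-|\psi(w)|\ge c\,(1-|w|)$ to get $\alpha_\varphi(\sigma)\le c^{-1}\alpha_{\varphi_\infty}(\sigma)<+\infty$. (Your Schwarz\,--\,Pick inequality $1-|\psi(w)|^2\ge\frac{1-|\psi(0)|}{1+|\psi(0)|}(1-|w|^2)$ is indeed correct; it follows from $|T_{\psi(0)}\circ\psi|\le|\cdot|$ and the identity $1-|T_a(z)|^2=(1-|a|^2)(1-|z|^2)/|1-\bar a z|^2$.)

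Your approach is shorter and more conceptual: it isolates the underlying fact that a regular contact point of a composition $\psi\circ\varphi$ is automatically a regular contact point of the inner factor~$\varphi$. The paper's approach, by contrast, yields as a by-product the explicit identification of $\varphi(\sigma_0)$ as the landing point of the preimage curve~$C$, but this is not needed for the lemma as stated.
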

\begin{proof}
By the very definition, the property of being embedded conformally at a prime end is conformally invariant. Therefore we may
assume that~$D_\infty=\UD$. Let $\phi_0$ and $\phi$ be any conformal mappings of~$\UD$ onto~$D_0$ and $D$, respectively. By the
definition of the conformal embedding at a prime end, the point~$\sigma_0\in\UC$ that corresponds to the prime end~$P$
under~$\phi_0$ is a regular contact point of~$\phi_0$. Denote by~$\gamma$ the arc~$\phi_0([0,\sigma_0))$. Since by hypothesis
$D_0\subset D\subset\UD$, this arc is a slit in~$D$. Therefore, by Koebe's Theorem (see, \textit{e.g.}, \cite[Theorem 1, \S II.3]{Goluzin} or \cite[Lemma~2 on p.\,162]{Shapiro-book}) the
arc~$C:=\phi^{-1}(\gamma)$ lands on~$\UC$ at some point~$\sigma\in\UC$. Consider the function $\varphi:=\phi^{-1}\circ\phi_0$.
Clearly, $\varphi$ maps $\UD$ conformally into itself. Moreover, since by construction $\phi(C)=\gamma$ and
$\varphi([0,\sigma_0))=C$ and since the angular derivative $\phi_0'(\sigma_0)$ exists finitely and does not vanish,
\begin{align}\label{EQ_contact-point-varphi}
&\lim_{r\to1^-}\varphi(r\sigma_0)=\sigma,\quad\text{and}\\ \label{EQ_ang-derivative}
&\lim_{r\to1^-}\frac{\varphi(r\sigma_0)-\sigma}{r\sigma_0-\sigma_0}=\phi_0'(\sigma_0)\left(\lim_{C\ni z\to
\sigma}\frac{\phi(z)-\phi_0(\sigma_0)}{z-\sigma}\right)^{\!\!-1}\!\!\!\!\!.
\end{align}
By~\eqref{EQ_contact-point-varphi}, $\varphi$ has a contact point at~$\sigma_0$. It follows, see \textit{e.g.} \cite[Proposition
4.13]{P2}, that the limit~\eqref{EQ_ang-derivative} exists and belongs to~$\ComplexE\setminus\{0\}$. Hence the limit in the
right-hand side of~\eqref{EQ_ang-derivative} also exists and must be finite. By Lemma~\ref{LM_derivative_exists} combined
with~\cite[Proposition 4.13]{P2}, it is, moreover, different from zero. Thus the limit~\eqref{EQ_ang-derivative} is finite,
\textit{i.e.}, $\varphi$ has a \textsl{regular} contact point at~$\sigma_0$. This proves the lemma.
\end{proof}

\begin{proof}[\textbf{Proof of Theorem~\ref{TH_embedd}.}] Clearly, we may assume that $\sigma=1$.
Fix any point $w_0\in\phi(\UD)$. By Lemma~\ref{LM_boundedLCh} there exists an L-admissible family~$(D_t)$ such that $D_0=\phi(\UD)$, $D_{t_0}=\UD$, $D_t\subset D_\infty:=\{z:\Re z<1\}$ for all~$t\ge0$.

Note that, $(D_t)$ is in fact chordally admissible. Indeed, let $P$ be the prime end of~$D_0$ that corresponds to~$1$ under the mapping~$\phi$. The function $\varphi:=\phi_\infty^{-1}\circ\phi$, where $\phi_\infty(z):=2z/(1+z)$ maps conformally~$\UD$ onto~$D_\infty$, has a BRFP at~$1$, because $\phi$  has a BRFP at~$1$ by hypothesis and $\phi_\infty^{-1}$ has also a fixed point at~$1$, being holomorphic there. Then by Lemma~\ref{LM_conformal-embedding}, $D_0$ is embedded in all $D_t$'s conformally at~$P$, which proves our claim.

Hence by Proposition~\ref{PR_incl_chains} and Remark~\ref{PR_incl_chains}, there exists a Loewner chain $(f_t)$ such that:
\begin{itemize}
\item[(a)] conditions (IIa), (IIb), and (IIc) from Proposition~\ref{PR_incl_chains} are fulfilled; \item[(b)] the
    point~$1$ corresponds under the map~$f_0:\UD\to D_0$ to the prime end~$P$.
\end{itemize}
In particular, $f_{t_0}\in\Moeb(\UD)$. It follows that, replacing~$(f_t)$ if necessary with the Loewner chain $(f_t\circ
f_{t_0}^{-1})_{t\ge0}$, we may assume that:
\begin{itemize}
\item[(c)] $f_{t_0}=\id_\UD$; \item[(d)] $f_0(\UD)=\phi(\UD)\subset\UD$, and $\varphi_{0,t_0}=f_0$ has a BRFP at~$1$,
    with $\varphi_{0,t_0}'(1)=\phi'(1)$, where $(\varphi_{s,t})$ stands for the evolution family of the Loewner chain~$(f_t)$.
\end{itemize}

Therefore, there exists $\ell\in\Moeb(\UD)$ of the form $\ell=p_0^{-1}\circ(w\mapsto w+iv_0)\circ p_0$, where
$p_0(z):=(1+z)/(1-z)$ is the Cayley map of~$\UD$ onto~$\UH:=\{w:\Re w>0\}$ sending~$1$ to~$\infty$, such that
$\phi=\varphi_{0,t_0}\circ\ell$. Define now $\ell_t:=p_0^{-1}\circ(w\mapsto w+itv_0/t_0)\circ p_0$ for all~$t\ge0$. Then, by
\cite[Lemma~2.8]{RMIA} the family~$(\psi_{s,t})$ defined by $\psi_{s,t}:=\ell^{-1}\circ \ell_t\circ \varphi_{s,t}\circ
\ell_t^{-1}\circ\ell$ for all $t\geq s\geq0$, is an evolution family in~$\UD$. Note that $\ell_0=\id_\UD$ and $\ell_{t_0}=\ell$
by construction. Thus $\psi_{0,t_0}=\varphi_{0,t_0}\circ\ell=\phi$. The proof is now complete.
\end{proof}

\section{Examples and Remarks on Theorem \ref{main1}}\label{EXAMPLE-ES}

Theorem \ref{main1} contains a characterization of boundary regular fixed points  of an evolution family in terms of the associated Loewner chain assuming that one of the functions of
the Loewner chain is conformal at a certain boundary point. A similar result can be obtained when the function has a simple pole.
Let us recall that a function $f:\UD\to\C$ has a {\sl simple pole (in the angular sense) at a boundary point $\sigma\in \mathbb
T$} if $\angle \lim_{z\to \sigma} (z-\sigma)f(z)\in \C^*.$

Following the notation in the classical case, we call this limit the {\sl residue of $f$ at $\sigma$} and denote it by $
\mathrm{Res}  (f;\sigma)$.
\begin{corollary}\label{pole}
Let $(f_t)$ be a Loewner chain and $(\varphi_{s,t})$ its evolution family. Suppose there exists $t_0\ge0$ such that the map
$f_{t_0}$ has a simple pole at a point $\sigma\in\UC$. Then the following two conditions are equivalent:
\begin{mylist}
\item[(i)] for every $s\ge0$ the following assertions hold:
\begin{itemize}
\item[(i.1)] the map~$f_s$ has a simple pole at the point~$\sigma$; \item[(i.2)] $\displaystyle \limsup_{t\to
    s^+}\big|\arg\big(\mathrm{Res}  (f_s;\sigma)/\mathrm{Res}  (f_t;\sigma)\big)\big|<\pi$.
\end{itemize}
\item[(ii)] the evolution family~$(\varphi_{s,t})$ has a boundary regular  fixed point at~$\sigma$.
\end{mylist}

Moreover, if conditions (i) and (ii) above hold, then
\begin{mylist}
\item[(iii)] the function $t\mapsto \mathrm{Res}  (f_t;\sigma)$ is locally absolutely continuous on~$[0,+\infty)$, with $\arg
    \big(\mathrm{Res}  (f_t;\sigma)\big)$ being constant.
\end{mylist}
\end{corollary}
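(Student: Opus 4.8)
The plan is to mirror the proof of the equivalence $\mathrm{(A)}\Leftrightarrow\mathrm{(C)}$ in Theorem~\ref{main1}, with the r\^ole of ``conformality at~$\sigma$'' played by ``simple pole at~$\sigma$'' and that of the angular derivative played by the residue. The bridge is the elementary observation that, for a \emph{fixed} pair $t\ge s\ge0$, if one picks $\omega_0\in\C\setminus f_t(\UD)$ and sets $g_\tau:=1/(f_\tau-\omega_0)$ for $\tau\in\{s,t\}$, then $g_s$ and $g_t$ are univalent in~$\UD$, $g_s(\UD)\subset g_t(\UD)$, $0\notin g_t(\UD)$, and $\varphi_{s,t}=g_t^{-1}\circ g_s$; moreover $f_\tau$ has a simple pole at~$\sigma$ if and only if $g_\tau$ is conformal at~$\sigma$ with $g_\tau(\sigma)=0$, in which case $g_\tau'(\sigma)=1/\mathrm{Res}(f_\tau;\sigma)$. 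One \emph{cannot} make this change of variable globally in~$t$: for $f_t(z)=(z-1)/(z+1)+t$ one has $\bigcup_{t\ge0}f_t(\UD)=\C$, so no single M\"obius transformation turns all the $f_t$ into functions finite at~$\infty$. This is the main technical point, and it forces the whole argument to be carried out pair by pair.

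To prove $\mathrm{(ii)}\Rightarrow\mathrm{(i)}$ and~$\mathrm{(iii)}$: assume $(\varphi_{s,t})$ has a BRFP at~$\sigma$, put $\rho:=\mathrm{Res}(f_{t_0};\sigma)$ and fix $s\ge0$. If $s\le t_0$ then $f_s=f_{t_0}\circ\varphi_{s,t_0}$ with $\varphi_{s,t_0}$ conformal at~$\sigma$, $\varphi_{s,t_0}(\sigma)=\sigma$ and $\varphi_{s,t_0}'(\sigma)>0$; since $\varphi_{s,t_0}$ maps each Stolz angle at~$\sigma$ into a Stolz angle at~$\sigma$ (Julia's lemma), letting $z\to\sigma$ angularly in $(z-\sigma)f_s(z)=\tfrac{z-\sigma}{\varphi_{s,t_0}(z)-\sigma}\cdot\big(\varphi_{s,t_0}(z)-\sigma\big)f_{t_0}\big(\varphi_{s,t_0}(z)\big)$ gives $\mathrm{Res}(f_s;\sigma)=\rho/\varphi_{s,t_0}'(\sigma)$. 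If $s>t_0$ then $f_{t_0}=f_s\circ\varphi_{t_0,s}$; choosing $\omega_0\notin f_s(\UD)$, applying Lemma~\ref{LM_derivative_exists} to $g:=1/(f_s-\omega_0)$ and $\omega:=0$ along the slit $C:=\varphi_{t_0,s}([0,\sigma))$ (which lands at~$\sigma$ since $\varphi_{t_0,s}$ is conformal there), and evaluating $\lim_C$ via the pole of~$f_{t_0}$, one obtains $\mathrm{Res}(f_s;\sigma)=\rho\,\varphi_{t_0,s}'(\sigma)$. In either case $\mathrm{Res}(f_s;\sigma)$ is a positive multiple of~$\rho$, which is~(i.1) and shows $\arg\mathrm{Res}(f_s;\sigma)$ is constant. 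The same computation applied to $f_s=f_t\circ\varphi_{s,t}$ (now that every $f_\tau$ has a pole at~$\sigma$) yields the cocycle identity $\mathrm{Res}(f_t;\sigma)=\mathrm{Res}(f_s;\sigma)\,\varphi_{s,t}'(\sigma)$; since $\varphi_{s,t}'(\sigma)=e^{\Lambda(s)-\Lambda(t)}$ this gives $\mathrm{Res}(f_t;\sigma)=\rho\,e^{\Lambda(t_0)-\Lambda(t)}$, and as our~(ii) is exactly condition~(A) of Theorem~\ref{main1}, that theorem makes $\Lambda$ locally absolutely continuous. Hence~(iii) follows, and $e^{\Lambda(t)-\Lambda(s)}>0$ gives~(i.2).

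To prove $\mathrm{(i)}\Rightarrow\mathrm{(ii)}$: fix $s\ge0$. By~(i.1), for each $t\ge s$ the functions $g_s,g_t$ built above (with $\omega_0\notin f_t(\UD)$) are conformal at~$\sigma$ with $g_s(\sigma)=g_t(\sigma)=0$, and $0\notin g_t(\UD)$. Running the argument of Step~1 of the proof of Theorem~\ref{main1} with $(g_s,g_t)$ in place of $(f_s,f_t)$ --- the hypothesis $f_s(\sigma)=f_{t_0}(\sigma)$ there being replaced by the automatic identity $g_s(\sigma)=g_t(\sigma)=0$, and $0\notin g_t(\UD)$ supplying the point outside the image --- shows that $\sigma$ is a contact point of~$\varphi_{s,t}$ and that $\varphi_{s,t}(\sigma)\neq\sigma$ forces $\big|\arg\!\big(g_t'(\sigma)/g_s'(\sigma)\big)\big|=\pi$, i.e. $\big|\arg\!\big(\mathrm{Res}(f_s;\sigma)/\mathrm{Res}(f_t;\sigma)\big)\big|=\pi$. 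Then one copies Step~2: the map $t\mapsto\varphi_{s,t}(\sigma)$ is continuous (Step~2.1 uses only the evolution-family axioms); by~(i.2) there is $\varepsilon>0$ such that $\big|\arg\!\big(\mathrm{Res}(f_s;\sigma)/\mathrm{Res}(f_t;\sigma)\big)\big|<\pi$, hence $\varphi_{s,t}(\sigma)=\sigma$, for all $t\in[s,s+\varepsilon)$ (Step~2.2); whenever $\varphi_{s,t}(\sigma)=\sigma$, Lemma~\ref{LM_derivative_exists} applied to $g:=g_t$, $\omega:=0$, $C:=\varphi_{s,t}([0,\sigma))$ shows that $\varphi_{s,t}'(\sigma)=g_s'(\sigma)/g_t'(\sigma)=\mathrm{Res}(f_t;\sigma)/\mathrm{Res}(f_s;\sigma)$ is finite and positive (Step~2.3); and the bootstrap of Step~2.4 then gives $\varphi_{s,t}(\sigma)=\sigma$ for all $t\ge s$. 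Thus $\sigma$ is a BRFP of every~$\varphi_{s,t}$, which is~(ii).

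The main obstacle is exactly the one flagged in the first paragraph: because the substitution $f\mapsto 1/(f-\omega_0)$ that turns a pole into a conformal point is only \emph{local} in~$t$, Theorem~\ref{main1} cannot be quoted as a black box, and one must re-inspect Steps~1 and~2 of its proof and check that they survive the pairwise substitution --- which they do, since those steps only use data attached to a single pair $(s,t)$. A secondary point needing care is the repeated use of Lemma~\ref{LM_derivative_exists} to upgrade limits along the slit $\varphi_{s,t}([0,\sigma))$ to angular limits when transferring the residue identities, together with the verification --- via the Julia--Wolff--Carath\'eodory theorem (Theorem~\ref{LM_Julia}) --- that the angular derivatives occurring there are finite rather than~$+\infty$.
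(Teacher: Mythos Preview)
Your argument is correct, but it takes a substantially longer route than the paper. The paper's proof is a genuine black-box reduction to Theorem~\ref{main1}: fix any $T>t_0$, pick a \emph{single} point $w_0\notin f_T(\UD)$, reparametrize time via the diffeomorphism $l(t):=Tt/(1+t)$ from $[0,+\infty)$ onto $[0,T)$, and set $g_t(z):=1/(f_{l(t)}(z)-w_0)$. Since $f_{l(t)}(\UD)\subset f_T(\UD)$ for every $t\ge0$, this $w_0$ works simultaneously for all~$t$, so $(g_t)$ is a bona fide Loewner chain whose evolution family is $(\varphi_{l(s),l(t)})$; Theorem~\ref{main1} then applies directly, and letting $T$ range over $(t_0,+\infty)$ covers all times.

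The key difference is that your opening claim---that the M\"obius substitution cannot be made globally in~$t$---is precisely what the paper circumvents with the reparametrization trick. Your counterexample $f_t(z)=(z-1)/(z+1)+t$ shows $\bigcup_{t\ge0}f_t(\UD)=\C$, but after squeezing the time interval into $[0,T)$ one only needs $w_0\notin f_T(\UD)\neq\C$, which always exists. This single idea collapses your pairwise re-inspection of Steps~1 and~2 into a three-line proof. What your approach buys is independence from the time-reparametrization lemma (one need not verify that $(g_t)$ satisfies LC3 after composing with~$l$), and it makes explicit which parts of the proof of Theorem~\ref{main1} are purely ``two-time'' statements; but the paper's route is considerably more economical.
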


\begin{proof}
Let us fix $T>t_0$ and a point $w_0\notin f_T(\UD)$.  Write $l(t):=Tt/(1+t)$ and consider the family of univalent functions
$g_t:\UD\to \C$ given by $z \mapsto g_t(z):=1/(f_{l(t)}(z)-w_0)$, for all $t\geq 0$ and $z\in \UD$. It is clear that $f_{l(t)}$
has a simple pole at~$\sigma$ if and only if $g_t(\sigma)=0 $ and $g_t$ is conformal at~$\sigma$. In such a case,
$g'_t(\sigma)=1/\mathrm{Res}  (f_t;\sigma)$. On the other hand, the evolution family associated with the Loewner chain $(g_t)$ is
nothing but $(\varphi_{l(s),l(t)})$. Bearing in mind these remarks and applying Theorem \ref{main1} to the Loewner
chain $(g_t)$ one can easily complete the proof.
\end{proof}

\begin{remark}
Thanks to the chain rule for angular derivatives, see,
{\it e.g.}, \cite[Lemma~2]{CDP2}, and condition~EF2 in the definition of evolution families, hypothesis (A) in Theorem \ref{main1} is equivalent to
\begin{mylist}
\item[(A${\vphantom{A}}'$)]{\it for each $t\ge0$ the point~$\sigma$ is a BRFP of~$(\varphi_{0,t})$.}
\end{mylist}
\end{remark}

Now we consider a family of examples demonstrating that conditions and conclusions in Theorem~\ref{main1} are the best
possible in some sense.

It easily follows from the definition that if $G$ is a Herglotz vector field, then $t\mapsto G'(z,t)$ is locally integrable
on~$[0,+\infty)$ for any~$z\in\UD$. This property is not inherited when passing to the boundary:

\begin{example}\label{EX_non-int-dilation}
We construct an
example of a Herglotz vector field having a BRNP at~$1$ for a.e.~$t\ge0$ such that the dilation~$\lambda(t):=G'(1,t)$
is not locally integrable. We actually show that any non-negative measurable function $\lambda$ defined a.e. on~$[0,+\infty)$ can
arise in this way.

Let $r$ and $\lambda$ be measurable non-negative functions defined a.e. on~$[0,+\infty)$. Assume that $r(t)<1$ for a.e.~$t\ge0$
and that $M(t):=\lambda(t)\big(1-r(t)\big)$ is locally integrable on~$[0,+\infty)$. Clearly, given any non-negative measurable
function~$\lambda$ one can find a function~$r$ such that these requirements are met.

Denote by $p_0(z):=(1+z)/(1-z)$ and consider the function
$$
G_{\lambda,r}(z,t):=(1-z)^2\frac{\lambda(t)}{2}\big(p_0(r(t)z)-p_0(z)\big),
$$
for a.e. $t\ge0$ and all $z\in\UD$. By Theorem~\ref{TH_G-phi}, for a.e.~$t\ge0$, $G_{\lambda,r}(\cdot,t)$ is an infinitesimal
generator with a BRNP of dilation~$\lambda(t)$ at $1$. Moreover, a simple calculation shows that
\begin{equation}
p_0(rz)-p_0(z)=-\frac{2(1-r)z}{(1-z)(1-rz)}.
\end{equation}
Therefore, $|G_{\lambda,r}(z,t)|\le 2M(t)$ for a.e.~$t\ge0$.

Thus, $G_{\lambda,r}$ is a Herglotz vector field having, for a.e.~$t\ge0$, a BRNP of dilation~$\lambda(t)$ at~$1$. Since
we can choose any measurable non-negative function for~$\lambda$, this example shows that condition~(B.2) in
Theorem~\ref{main1} does not follow from condition~(B.1).
\end{example}

\begin{remark}\label{RM_caseDW}
Note that if an evolution family $(\varphi_{s,t})$ has a BRFP at $\sigma\in\partial \UD$ with {\it non-decreasing spectral
function}~$\Lambda$, which means that~$\sigma$ is the DW-point of all~$\varphi_{s,t}$'s different from the identity mapping, and
if the evolution family  $(\varphi_{s,t})$ is of some order~${d\in[1,+\infty]}$, then $\Lambda'$ {\it must be of class
$L^d_{\mathrm{loc}}$}, see~\cite[Theorem~7.3]{BCM1}.
\end{remark}

The  construction in Example \ref{EX_non-int-dilation} can be also used to see that in contrast to the case of  boundary DW-point (see Remark~\ref{RM_caseDW}), any non-positive locally integrable function can arise as the derivative of the spectral function of an evolution
family~$(\varphi_{s,t})$ at a  BRFP which is not the DW-point even if we require that the evolution family~$(\varphi_{s,t})$ is of the best
possible order~$d=+\infty$.
\begin{example}\label{EX_non-Ld}
In the construction of Example~\ref{EX_non-int-dilation}, assume additionally that $\lambda$ is of class~$L^1_{\mathrm{loc}}$.
Set
$$%
r(t):=\left\{\begin{array}{ll}

0,&\text{if $\lambda(t)\le1$,}\\

1-1/\lambda(t),&\text{if $\lambda(t)\ge1$.}

\end{array}\right.$$ %
Then $0\le M(t)\le 1$ for a.e.~$t\ge0$. Hence $G_{\lambda,r}$ is an $L^\infty$-Herglotz vector field. By
Theorem~\ref{main1}, the $L^\infty$-evolution family~$(\varphi_{s,t})$ generated by~$G_{\lambda,r}$ has a BRFP
point at~$1$ and  its spectral function~$\Lambda$ satisfies~$\Lambda'=-\lambda$ a.e. on~$[0,+\infty)$. Therefore, the regularity of the spectral function in Theorem~\ref{main1}
does not depend on the order of the evolution family.
\end{example}
Nevertheless, as the following proposition shows, the order of~$(\varphi_{s,t})$ still imposes certain restrictions on the behaviour of~$\Lambda$.
\begin{proposition}
Let $(\varphi _{s,t})$ be an evolution family of order~$d\in[1,+\infty]$ and $G$ its Herglotz field.
Suppose that $\sigma\in\UC$ is a BRFP of~$(\varphi_{s,t})$ and let $\lambda
(t):=G'(\sigma,t)$. Then $\lambda^{-}:=\tfrac12(|\lambda|-\lambda)\in
L^{d}_{\mathrm{loc}}([0,+\infty),\mathbb{R})$.
\end{proposition}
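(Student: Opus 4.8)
The plan is to reduce the assertion to the pointwise estimate $\lambda^{-}(t)\le 2\,|G(0,t)|$, valid for a.e.\ $t\ge0$, and then to invoke property~H3 of the Herglotz vector field~$G$. As a preliminary, recall that by Theorem~\ref{TH_contact}\,(iv)--(v), applied with $\sigma(t)\equiv\sigma$ (legitimate here because $\sigma$ is a genuine fixed point of every $\varphi_{s,t}$, not merely a contact point), the function $\lambda=G'(\sigma,\cdot)$ is real-valued and of class $L^{1}_{\mathrm{loc}}$; in particular $\lambda^{-}$ is measurable, so only the local $L^{d}$ integrability has to be established. Moreover, by the equivalence (A)$\Leftrightarrow$(B) of Theorem~\ref{main1}, condition~(B.1) gives that for a.e.\ $t\ge0$ the infinitesimal generator $G(\cdot,t)$ has a boundary regular null point at~$\sigma$ with dilation exactly $G'(\sigma,t)=\lambda(t)$.

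Next I would fix such a~$t$ and use the Berkson--Porta type representation~\eqref{EQ_BCM-repr} from Theorem~\ref{TH_G-phi}: there is $p(\cdot,t)\in\clP$ with
\[
G(z,t)=(z-\sigma)(\overline{\sigma}z-1)\Bigl(p(z,t)-\tfrac{\lambda(t)}{2}\,\tfrac{\sigma+z}{\sigma-z}\Bigr),\qquad z\in\UD .
\]
Evaluating this identity at $z=0$ yields $G(0,t)=\sigma\bigl(p(0,t)-\lambda(t)/2\bigr)$, hence $\Re p(0,t)=\Re\bigl(\overline{\sigma}\,G(0,t)\bigr)+\lambda(t)/2$. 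Since membership in $\clP$ forces $\Re p(0,t)\ge0$, this gives $\lambda(t)\ge-2\Re\bigl(\overline{\sigma}\,G(0,t)\bigr)\ge-2\,|G(0,t)|$, and therefore $\lambda^{-}(t)=\max\{-\lambda(t),0\}\le 2\,|G(0,t)|$ for a.e.\ $t\ge0$. (This is precisely the elementary trick already exploited in Step~2 of the proof of Theorem~\ref{TH_contact}.)

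To conclude, I would fix $T>0$ and apply condition~H3 in Definition~\ref{DH} to the compact set $K=\{0\}\subset\UD$: there is a non-negative $k_{\{0\},T}\in L^{d}([0,T],\R)$ with $|G(0,t)|\le k_{\{0\},T}(t)$ for a.e.\ $t\in[0,T]$. Combined with the previous step this gives $0\le\lambda^{-}(t)\le 2\,k_{\{0\},T}(t)$ a.e.\ on $[0,T]$, so $\lambda^{-}\in L^{d}([0,T],\R)$; as $T>0$ is arbitrary, $\lambda^{-}\in L^{d}_{\mathrm{loc}}([0,+\infty),\R)$. I do not expect a real obstacle: the only points needing a word of care are that the representation~\eqref{EQ_BCM-repr} is available only for \emph{almost every}~$t$ (which is exactly what (B.1) supplies) and that $\lambda$ is already known to be measurable, so that $\lambda^{-}$ is a bona fide measurable function to integrate.
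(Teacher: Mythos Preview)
Your argument is correct and is more direct than the paper's. The paper proceeds by restricting the Herglotz field to the set $A=\{t:\lambda(t)\le0\}$, defining $\widehat G(z,t):=G(z,t)\chi_A(t)$, and observing that for the evolution family generated by~$\widehat G$ the point~$\sigma$ becomes the common boundary Denjoy--Wolff point (because the dilation $-\lambda^-(t)$ is non-positive, making the spectral function non-decreasing); it then invokes Remark~\ref{RM_caseDW} (\textit{i.e.} \cite[Theorem~7.3]{BCM1}) to conclude that $\lambda^-\in L^d_{\mathrm{loc}}$. Your route bypasses this reduction entirely: the pointwise bound $\lambda^-(t)\le 2|G(0,t)|$, obtained from the Berkson--Porta type representation~\eqref{EQ_BCM-repr} exactly as in Step~2 of the proof of Theorem~\ref{TH_contact}, plus condition~H3 at~$K=\{0\}$, already gives the $L^d_{\mathrm{loc}}$-estimate. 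The gain is that you do not need to construct an auxiliary evolution family nor appeal to the external result~\cite[Theorem~7.3]{BCM1}; the paper's detour is more conceptual (it isolates the DW-point case as the essential one), but your argument is self-contained within the tools already developed here.
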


\begin{proof}
Let $A:=\{t\ge0: \lambda(t) \leq 0\}$. Since $\lambda$ is measurable, it is easy to check that
$\widehat{G}(z,t):=G(z,t)\chi_{A}(t)$ is a Herglotz vector field of order~$d$. Moreover, for a.e.~$t\ge0$, $\widehat{G}$ has a BRNP at~$\sigma$ with the dilation~$-\lambda^-(t)$. By Theorem~\ref{main1} the evolution family $(\widehat{\varphi}_{s,t})$ of order~$d$ generated by~$\widehat{G}$ has a RBFP at~$\sigma$ with the spectral function $\widehat\Lambda(t):=\int_0^{t}\lambda^-(s)ds$, which is non-decreasing on~$[0,+\infty)$.
Therefore, $\lambda^{-}\in L^{d}_{\mathrm{loc}}([0,+\infty),\mathbb{R})$ by Remark~\ref{RM_caseDW}.
\end{proof}

The previous examples rise naturally the question about the behavior of the evolution family at the BRNP of its Herglotz vector field if
the dilation~$\lambda$ is not locally integrable. Our next two examples reveal two different types of behavior: in the first case
this point is not a fixed point and is mapped into~$\UD$ by $\varphi_{0,t}$ whenever $t>0$, while in the second case this point is
still a boundary fixed point of all~$\varphi_{s,t}$'s, although it is not regular if $s=0$.

\begin{example}\label{EX_nonFP}
Let us consider again the vector field $G_{\lambda,r}$ from Example~\ref{EX_non-int-dilation} with a particular choice of the
function~$\lambda$. Namely, we fix some $T>0$ and assume that ${r:(0,T)\to(0,1)}$ is a smooth function that tends to $1$ as
$t\to0^+$. Set $\lambda(t):=2/(1-r(t))$ for all $t\in(0,T)$. (For $t\ge T$ we can extend the functions $r$ and $\lambda$ assuming
them to be constants.)

Then again $G_{\lambda,r}$ is an $L^\infty$-Herglotz vector field. Let us consider the generalized Loewner\,--\,Kufarev equation
with $G=G_{\lambda,r}$ restricted to $(0,1)$. The solution $\xi_x$ to
\begin{equation}\label{EQ_LK_Real}
\frac{d\xi_x(t)}{dt}=G_{\lambda,r}(\xi_x(t),t)= -\left.\frac{2z(1-z)}{1-r(t)z}\right|_{z:=\xi_x(t)},~t\ge0,\quad
\xi_x(0)=x\in(0,1),
\end{equation}
stays in $(0,1)$ for all $t\ge0$, because $G_{\lambda,r}(\xi)$ is real for $\xi\in(0,1)$ and $G_{\lambda,r}(0)=0$.

If we let
$$r(t):=1-\left(\frac2\alpha-1\right)\left(e^{\alpha t}-1\right),$$
for fixed $\alpha\in(0,2)$,  the function $\xi_*(t):=e^{-\alpha t}$, which tends to~$1^-$ as $t\to0^+$, is a solution to
\eqref{EQ_LK_Real} on~$(0,T)$ for   $T>0$  small enough.
Thus,  we have $\xi_x(t)< \xi_*(t)$ for all $t\in(0,T)$ and all $x\in(0,1)$. Indeed, otherwise there would
exist $t^*\in(0,T)$ and $x^*\in(0,1)$ such that $\xi_{x^*}(t^*)=\xi_*(t^*)$, which contradicts the uniqueness property for the
solutions to the Cauchy problem~\eqref{EQ_LK_Real}. Thus the point $1$ is not a boundary fixed point of~$\varphi_{0,t}$
for any~$t>0$, although $\varphi_{0,t}(1)$ exists in the angular sense and lies in~$(0,1)$.
\end{example}

\begin{example}\label{EX_nonR-FP}
In the previous example, for small $t>0$ the function $r(t)$ behaved asymptotically as $1-(2-\alpha)t$. Let us now consider the
same choice of $\lambda$ but for the case when $r(t)= 1-\beta t$ for all $t\in(0,T)$, where $\beta>2$ is fixed. We are going to
prove that in this case $\varphi_{s,t}$ has a boundary fixed point at~$1$. Note that in view of
Theorem~\ref{main1}, this point cannot be a BRFP whenever $s=0$, because $\lambda(t)=1/(\beta t)$ for all
$t\in(0,T)$.

First we show that there are no non-trivial singular solutions, {\it i.e.}, the problem
\begin{equation}\label{EQ_LK_singular}
\frac{d\xi_*(t)}{dt}=G_{\lambda,r}(\xi_*(t),t),~~t\in(0,T),\quad\lim_{t\to0^+}\xi_*(t)=1,
\end{equation}
has no solutions $\xi_*:(0,T)\to(0,1)$. Suppose on the contrary that such a solution exists. Denote $u(t):=(1-\xi_*(t))/t$. Then
according to Lagrange's Mean Value Theorem applied to the function~$\xi_*$ on the interval~$(0,t)$, \eqref{EQ_LK_singular} implies that for all $t\in(0,T)$,
$$0\le u(t)=-G_{\lambda,r}\big(\xi_*(t^*),t^*\big)\le\frac{2 u(t^*)}{\beta + r(t^*) u(t^*)},$$ where $t^*\in(0,t)$ depends
on~$t$. Assuming for convenience that $T>0$ is small enough so that $r(t)>1/2$ for all $t\in(0,T)$, we now conclude that $0\le
\alpha:=\sup_{t\in(0,T)}u(t)\le4$. Recalling that $\beta>2$ and using the above inequality again, we see that actually $\alpha=0$
and hence $\xi_*\equiv1$.

Note that $\varphi_{s,t}$ extends holomorphically to $\UC$ for any $s>0$ and $t\ge s$. Hence to show that~$1$ is a boundary
fixed point of~$\varphi_{s,t}$ for all $t\geq s\geq0$, it is clearly sufficient to prove this statement for $s=0$ and
$t\in(0,T)$. To this end, in turn, it is sufficient to show that $\sup_{x\in(0,1)}\varphi_{0,t_0}(x)=1$ for at least one
$t_0\in(0,T)$.

Suppose on the contrary that $m(t):=\sup_{x\in(0,1)}\varphi_{0,t}(x)<1$ for all $t\in(0,T)$. Note that the function $m$ does not
increase on~$(0,T)$. Then using Lebesgue's Dominated Convergence Theorem we can pass to the limit as $x\to1^-$ under the integral
sign in the equality $\varphi_{0,t}(x)=\varphi_{0,t_0}(x)+\int_{t_0}^tG_{\lambda,r}(\varphi_{0,s}(x),s)\,ds$, which holds for any
$t,t_0\in(0,T)$ and all ${x\in(0,1)}$, to conclude that $\xi_*(t):=\lim_{x\to1^-}\varphi_{0,t}(x)$ is a solution to
problem~\eqref{EQ_LK_singular}. (The limit exists because $\varphi_{0,t}$ is increasing and bounded on~$(0,1)$.) However, as we
proved above there exist no non-trivial solutions to~\eqref{EQ_LK_singular}.

Thus in this example the point~$1$ is a boundary (non-regular) fixed point of the evolution family~$(\varphi_{s,t})$ and a BRNP of the
Herglotz vector field~$G(\cdot,t)=G_{\lambda,r}(\cdot,t)$ of~$(\varphi_{s,t})$ for a.e.~$t\ge0$, although the dilation
$\lambda(t)=G'(t,1)$ is not locally integrable.
\end{example}

\begin{remark}
In~\cite[\S7]{CAOT} it was constructed  an example of an evolution family~$(\varphi_{s,t})$ with the DW-point at~$1$ such that all elements of every Loewner chain~$(f_t)$ associated with~$(\varphi_{s,t})$ have no angular limits at~$1$. Hence the conformality of $f_{t_0}$ at~$\sigma$ is an essential condition for assertion~(C) to be included in Theorem~\ref{main1}.
\end{remark}
Now we present an
example showing that condition~(C.3) in Theorem~\ref{main1} cannot be omitted.
\begin{example}
We are going to construct a Loewner chain satisfying conditions~(C.1) and~(C.2) from Theorem~\ref{main1} but not (C.3), for which (A) does not hold, \textit{i.e.} a Loewner chain~$(f_t)$ such that all $f_t$'s are conformal at the
point~$1$, share the same value at this point (in our construction $f_t$'s have continuous extension to~$\overline{\UD}$),
but the evolution family~$(\varphi_{s,t})$ of~$(f_t)$ fails to have a BRFP at~$1$. The construction is divided into several
steps.

\begin{figure}[t]
\vbox to 2ex{\vss}\centerline{%
\scalebox{1}{\begin{pspicture}(0,0.6469481)(11.34,8.92)
\definecolor{color2562}{rgb}{0.8,0.0,0.0}
\psarc[linewidth=0.08](8.65,3.47){2.65}{0.0}{144.64804}
\rput{-179.66422}(17.283371,7.0374107){\psarc[linewidth=0.04,linestyle=dashed,dash=0.16cm
0.16cm](8.651996,3.4933834){2.6073031}{-33.975674}{179.32422}}
\psline[linewidth=0.08](6.0,3.5)(6.0,0.82)(11.3,0.82)(11.3,3.5)(11.28,3.54) \psdots[dotsize=0.12](8.66,3.52)
\usefont{T1}{ptm}{m}{n}
\rput(9.401406,3.57){$w=0$} \psline[linewidth=0.08cm](6.0,3.52)(6.0,6.06)
\usefont{T1}{ptm}{m}{n}
\rput(6.9214063,3.73){\color{color2562} $w_0=-1$} \psframe[linewidth=0.08,dimen=outer](11.34,8.92)(0.96,0.78)
\psdots[dotsize=0.14](6.0,6.06)
\usefont{T1}{ptm}{m}{n}
\rput(6.2414064,6.47){$w_1=-1+i$} \psdots[dotsize=0.15](6.5,5.02)
\usefont{T1}{ptm}{m}{n}
\rput(3.4396875,7.83){{\mathversion{bold}$D_t,~t>0$}}
\usefont{T1}{ptm}{m}{n}
\rput(8.241406,3.13){\color{blue}$\gamma_{t}:=g_t([0,1))$} \psdots[dotsize=0.058000002,linecolor=blue](8.12,3.34)
\psarc[linewidth=0.03,linecolor=blue](6.21,-2.71){6.21}{78.76186}{91.21887}
\rput{244.67653}(6.5682583,14.518422){\psarc[linewidth=0.03,linecolor=blue](7.88,5.18){1.86}{10.392056}{32.44849}}
\psline[linewidth=0.03cm,linecolor=blue](5.92,3.5)(3.42,3.5) \psdots[dotsize=0.066,linecolor=blue](3.4,3.5)
\usefont{T1}{ptm}{m}{n}
\rput(4.2814064,3.85){\color{blue}$\gamma:=[-2,-1)$} \psdots[dotsize=0.19600001,linecolor=color2562](6.0,3.5)
\end{pspicture}}}
\caption{The inclusion chain~$(D_t)$.}\label{FG_inclusion-chain}
\end{figure}
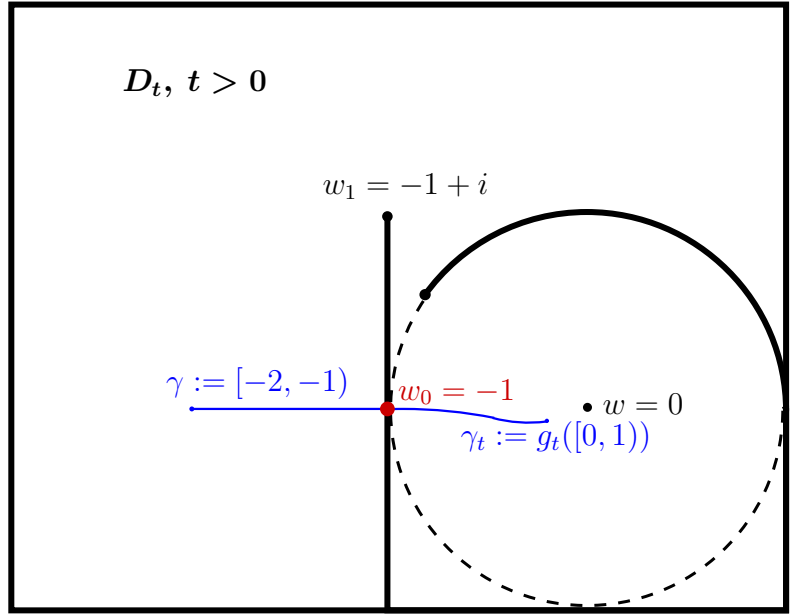

\step{1}{First we construct an inclusion chain~$(D_t)$ composed of domains~$D_t$ with locally connected boundaries such that $D_0$
is embedded in each of $D_t$'s conformally at some prime end~$P$, whose impression $\{w_0=-1\}$ corresponds to only \underline{one}
boundary accessible point of~$D_0$ and to exactly \underline{two} boundary accessible points of~$D_t$ for all~$t>0$.}\vskip-2ex
\begin{align*}
\mathllap{\text{Let }\quad}D_0&:=\{x+iy:\,-1<x<1,\,-1<y<0\}\cup\UD,\\
\hphantom{D_0}\mathllap{D_t}&:=\{x+iy:\,-3<x<1,\,-1<y<2\}\setminus\Big([-1-i,-1+i]\\&\qquad\cup\{z:\,|z|=1,\,0\le\arg z\le
\pi(1-e^{-t})\}\Big)\quad \text{for all $t>0$},
\end{align*}
see Figure~\ref{FG_inclusion-chain}. Clearly, $(D_t)_{t\ge0}$ is an inclusion chain. Since $D_0$ is a Jordan domain, there is a one-to-one correspondence between
$\partial D_0$ and the set of prime ends of $D_0$. Denote by~$P$ the prime end corresponding to the boundary point~$w_0=-1$. Using
\cite[Theorem~3.9, p.\,52]{P2} with~$\alpha=1$ and the Schwarz Reflection Principle, it is easy to see that $D_0$ is embedded
conformally in~$D_t$ at~$P$ for any~$t\ge0$.

\step{2}{Now we construct a Loewner chain~$(\tilde g_t)$ such that $\tilde g_t(0)=0$ and $\tilde g_t(\UD)=D_t$ for all~$t\ge0$,
the evolution family~$(\tilde\psi_{s,t})$ of~$(\tilde g_t)$ has a BRFP at~$1$, and for every~$t>0$ there exists $\sigma(t)\in\UC\setminus\{1\}$ such that~$\tilde g_t(\sigma(t))=\tilde g_t(1)=-1$.} By \cite[Theorem 1.10]{CAOT}, there exists a Loewner chain~$g_t$
of chordal type such that up to the change of the parametrization of the family~$(D_t)$, we have~$g_t(\UD)=D_t$. In what follows
we assume that~$(D_t)$ is reparameterized in such a way. By the very definition of the Loewner chain of chordal type, all elements of the evolution family~$(\psi_{s,t})$ associated with~$(g_t)$, different from~$\id_\UD$, share the same DW-point~$\tau=1\in\UC$.
Moreover, by the construction used in the proof of~\cite[Theorem 1.10]{CAOT}, $g_t(1)=w_0=-1$ for all $t\ge0$. Here we have taken
into account the fact that $\partial D_t$ is locally connected, so by Carath\'eodory's Continuous Extension Theorem, see
\textit{e.g.}~\cite[Theorem~2.1 on p.\,20]{P2}, the function $g_t$ has a continuous extension to~$\overline\UD$, which we will
denote again by~$g_t$.

Since $(\psi_{s,t})$ has a BRFP at~$\tau=1$, by Theorem~\ref{main1}, $\arg g_t(1)$  does not depend on~$t$. Therefore,
for each~$t>0$, the slits $\gamma_{t}:=g_t([0,1))$ and $\gamma:=[-2,-1)$ in~$D_t$, which land both at $w_0=-1$, are not
equivalent. This means~(see \textit{e.g.} \cite[Theorem 1, \S II.3]{Goluzin}) that there is a
point~$\sigma(t)\in\UC\setminus\{\tau=1\}$ such that $g_t(\sigma(t))=w_0=-1$. The same holds for the Loewner chain~$(\tilde g_t)$
defined by~$\tilde g_t:=g_t\circ\ell_t$ for all $t\ge0$, where $(\ell_t)\subset\Moeb(\UD)$ is given by
$$
\ell_t(z):=\frac{1+\overline{z_0(t)}}{1+z_0(t)}\,\frac{z+z_0(t)}{1+z\,\overline{z_0(t)}}\,,\quad z_0(t):=g_t^{-1}(0),~t\ge0.
$$
To see that $(\tilde g_t)$ is really a Loewner chain and, correspondingly, $(\tilde\psi_{s,t})=(\tilde g_t^{-1}\circ\tilde g_s)=(\ell_t^{-1}\circ\psi_{s,t}\circ\ell_s)$ is its evolution family it is sufficient to apply \cite[Lemmas~2.8 and~3.2]{RMIA} bearing in mind that, by~\cite[Proposition~3.7]{BCM1}, the function~$t\mapsto z_0(t)=\psi_{0,t}\big(z_0(0)\big)$ is locally absolutely continuous.

\step{3}{Now we show that $0<t\mapsto \sigma(t)$ has a locally absolutely continuous extension to~$[0,+\infty)$.} Fix $s_0>0$ and
consider the evolution family formed by the function~$\hat \psi_{s,t}:=\tilde\psi_{s_0+s,s_0+t}$, $0\le s\le t$. The functions
$\hat g_t:=\tilde g_{s_0+t}$, $t\ge0$, form a Loewner chain associated with~$(\hat\psi_{s,t})$. Using the Schwarz Reflection
Principle it then easy to see that the evolution family $(\hat\psi_{s,t})$ has a regular contact point at~$\sigma(s_0)$. Since by
construction $\sigma(s_0+t)=\hat\psi_{0,t}\big(\sigma(s_0)\big)$ for all $t>0$ and since we may choose any~$s_0>0$,
Theorem~\ref{TH_contact} implies that $(0,+\infty)\ni t\mapsto \sigma(t)$ is locally absolutely continuous. To prove that this
function can be extended absolutely continuous to $[0,+\infty)$ it is sufficient to show that $\arg \sigma(t)$ is monotonic.

By a similar argument one can prove that the unique preimage~$\sigma_1(t)$ of the point ${w_1:=-1+i}$ w.r.t. $\tilde g_t$, $t>0$,
depends on~$t$ (locally absolutely) continuous. Denote $L_t:=g_t^{-1}\big([w_0,w_1]\big)$, $t>0$. Then, see \textit{e.g.}
\cite[Proposition~2.5 on p.\,23]{P2}, $L_t$ is the arc of the unit circle with end-points~$\tau=1$ and $\sigma(t)$
containing~$\sigma_1(t)$ as an interior point. Note that \cite[Theorem 8.1]{CAOT}, the functions $\tilde\psi_{s,t}$ have
continuous extension to~$\overline\UD$. Then by construction $\tilde\psi_{s,t}(L_s)=L_t$. Note also that $\tilde\psi_{s,t}(0)=0$. Therefore, by Loewner's Lemma (see
\textit{e.g.} \cite[Proposition~4.15 on p.\,85]{P2}) the length of $L_s$ does not exceed the length of~$L_t$ whenever $0<s\le t$.
Bearing in ming that $(0,+\infty)\mapsto\sigma_1(t)\in L_t\setminus\{\sigma(t),\tau=1\}$ is continuous, we easily conclude
that~$t\mapsto\arg\sigma(t)$ is a monotonic function.

\step{4}{We prove that $\sigma(t)\to1$ as $t\to0^+$.} Recall that $\tilde g_t$ is univalent in~$\UD$ for any $t\ge0$. Note also
that by construction the Euclidean diameter $\diam_{\Complex}(D_t)\le 5$ for all~$t\ge0$. Therefore, by a version of the
K\oe be-$1/4$ Theorem, see \textit{e.g.} \cite[Corollary~1.4 on p.\,22]{Pommerenke} for any $z\in\UD$ and any~$t\ge0$,
\begin{equation}\label{EQ_No-Koebe-Arcs-Th-est}
\frac{|\tilde g_t'(z)|}{1+|\tilde g_t(z)|^2}(1-|z|^2)\le |\tilde g_t'(z)|\,(1-|z|^2)\le 4 \dist\big(\tilde g_t(z),\partial
D_t\big)\le 20.
\end{equation}
Again by construction, for each $t>0$ there exists an arc $C_t$ of the unit circle with the following properties:
\begin{itemize}
\item[(a)] for each $t>0$, $C_t$ is a cross-cut in~$D_t$ and one of the
 landing points of~$C_t$ coincides with  $w_0=-1$;
 \item[(b)] for each~$t>0$, $C_t$ separates $(w_0,w_1]$ from $\tilde g_t(0)=0$, \textit{i.e.}
the closure of the connected component of $D_t\setminus C_t$ that contains~$\tilde g_t(0)$ does not intersect $(w_0,w_1]$;
 \item[(c)] $\diam_{\Complex}(C_t)\to0$ as $t\to0^+$.
\end{itemize}
By the Lehto\,--\,Virtanen version of the No-K\oe be-Arcs Theorem, see, \textit{e.g.}, \cite[Theorem~9.2 and Remark on p.\,265]{Pommerenke}, from~\eqref{EQ_No-Koebe-Arcs-Th-est} and~(c) it follows that
we have:
\begin{itemize}
\item[(d)] $\diam_{\Complex} \big(\tilde g_t^{-1}(C_t)\big)\to 0$ as $t\to0^+$.
\end{itemize}
Now from (b) and (d) it follows that $\diam_{\Complex}(L_t)\to0$ as~$t\to0^+$ and thus our claim is proved.

\step{5}{Finally, we construct the desired Loewner chain~$(f_t)$.} By Steps~3 and~4 the function $t\mapsto \sigma(t)$ extended
to~$t=0$ by $\sigma(0):=1$ is locally absolutely continuous on~$[0,+\infty)$. Therefore, the formula $f_t(z):=\tilde
g_t\big(\sigma(t)z\big)$ for all $z\in\UD$ and all~$t\ge0$ defines a Loewner chain~$(f_t)_{t\ge0}$. Moreover, by construction for
all~$t\ge0$ the function $f_t$ is conformal at~$1$ and $f_t(1)=w_0=-1$, \textit{i.e.} the Loewner chain~$(f_t)$ fulfills
conditions (C.1) and (C.2) in Theorem~\ref{main1}, but the evolution family~$(\varphi_{s,t})$ of~$(f_t)$ does not
fulfill condition~(A) in Theorem~\ref{main1}, since $\varphi_{0,t}(1)=\sigma(t)\neq1$ for all~$t>0$. The reason is
that condition (C.3) does not hold: $\arg f'_t(1)=0$ for all $t>0$, while $\arg f'_0(1)=\pi$.
\end{example}

\end{document}